\newtheorem{theorem}{Theorem}[section]
\newtheorem{corollary}[theorem]{Corollary} 
\newtheorem{lemma}[theorem]{Lemma}
\theoremstyle{definition}
\newtheorem{definition}[theorem]{Definition}
\newtheorem{example}[theorem]{Example}
\newtheorem{remark}[theorem]{Remark}
\newcommand\<{\langle}
\renewcommand\>{\rangle}
\newcommand\tr{\operatorname{Tr\,}}
\newcommand\Div{\operatorname{div}}
\newcommand\mD{{\cal D}}
\newcommand\mP{{\cal P}}
\newcommand\mF{{\cal F}}
\newcommand\mK{{\cal K}}
\newcommand\mH{{\cal H}}
\newcommand\mV{{\cal V}}
\newcommand\mS{{\cal S}}
\newcommand\mN{{\cal N}}
\newcommand\mT{{\cal T}}
\newcommand\vol{\operatorname{vol}}
\newcommand\Riem{\operatorname{Riem}}
\newcommand\dt{\partial_t}
\newcommand\ddt{\partial^2_{tt}}
\newcommand\tT{{\tilde T}^{\sharp t}}
\newcommand\tA{{\tilde A}^t}
\newcommand\ee{{\cal E}}
\begin{document}

\title{Variations of metric that preserve a Riemannian submersion and geometry of its fibers}

\author{ Tomasz Zawadzki\footnote{Faculty of Mathematics and Computer Science, University of \L\'{o}d\'{z}, ul. Banacha 22, 90-238 \L\'{o}d\'{z}, Poland;
e-mail: {\tt tomasz.zawadzki@wmii.uni.lodz.pl}  }}

\date{}

\maketitle 

\begin{abstract}
On the domain of a Riemannian submersion, we consider variations (i.e., smooth one-parameter families) of Riemannian metrics, for which the submersion is Riemannian and which all keep the metric induced on its fibers fixed. We obtain a formula for the variation of the second fundamental form of the fibers with respect to such changes of metric. We find a choice of parameters defining the variations, that allows to easily formulate the necessary and sufficient conditions for preserving particular geometry of the fibers, i.e., keeping them totally geodesic, totally umbilical, or minimal. These conditions are related to the existence of Killing, conformal Killing and divergence-free vector fields on the fibers. 
We find conditions for metric to be a critical point of integrated squared norms of the mean curvature and the second fundamental form of the fibers, with respect to the considered variations, and prove that at all critical points of these functionals the second variation is non-negative. We also examine variations of sectional curvatures of planes defined by the horizontal lifts of vectors from the image of the submersion. In particular, we find that some variations preserving Riemannian submersions with totally geodesic fibers can make vertizontal curvatures not constant on the fibers.

\noindent
\textbf{Keywords}: Riemannian submersion, distribution, variation, second fundamental form, mean curvature

\noindent
\textbf{Mathematics Subject Classifications (2020)} 
53C12; 
53C15
\end{abstract}

\section{Introduction}

Riemannian submersions are a classical tool in differential geometry \cite{Gray,O'Neill}, used e.g., to produce examples of manifolds satisfying particular curvature conditions \cite{Cheeger, Ziller}. 
On the domain $(M,g)$ of a Riemannian submersion $\pi : (M, g) \rightarrow (N, g_N)$ we have two orthogonal distributions: the vertical one, tangent to the fibers of $\pi$, and the horizontal one, orthogonal to the fibers and mapped by $\pi_*$ isometrically onto the tangent bundle $TN$. 
The metric on the horizontal distribution is thus determined by 
$\pi$ and $g_N$, which leads to two complementary ways of deforming $g$, while keeping $\pi$ a Riemannian submersion. One is to change $g$ only along the fibers of $\pi$, preserving both the metric on the horizontal distribution and the horizontal distribution itself - this is the case of e.g., canonical variations \cite{Besse}, or conformal changes of metric along the fibers \cite{March, Nagy}. 
Another way 
is to 
consider metrics fixed along the fibers, but with 
various horizontal distributions isometric (by $\pi_*$) to $TN$. For example, in \cite{AlvarezLopez} the existence of such horizontal distribution for which the mean curvature of fibers is basic was proved.  

In this paper we consider
one-parameter families of metrics $\{ g_t, t \in (-\epsilon , \epsilon) \}$ on the domain $M$ of a Riemannian submersion $\pi : (M,g_0) \rightarrow (N,g_N)$, where the restrictions of metrics to every fiber remain unchanged, but for different values of $t$ a different distribution may be orthogonal to the fibers with respect to $g_t$. As the horizontal distribution changes, the metric on it gets rescaled, so that $\pi_*$ keeps it isometric to $TN$. In effect, $\pi: (M, g_t) \rightarrow (N, g_N)$ remains a Riemannian submersion for all $t \in (-\epsilon , \epsilon)$ and on every fiber of $\pi$ all $g_t$ induce the same metric as $g_0$. While the intrinsic geometry of the fibers remains fixed, we examine how their extrinsic geometry (defined by the second fundamental form) varies, and how it can be preserved. 

The description of one-parameter families (called variations) of metrics, that preserve a given Riemannian submersion, gives a straightforward condition, relating components of $B_t = \dt g_t$ tensor field, given in Theorem \ref{thBXYriem}. Obtaining such
variation amounts to solving a certain system of 
linear ordinary differential equations at every point of $M$, where particular components of $B_t$ in a $g_0$-orthonormal frame are parameters controlling the ODEs and can be set arbitrarily. However, we find it more useful to consider 
an equivalent, non-linear ODE system determining the metric, with 
parameters closely related to properties of $B_t^\sharp$ tensor field, which is defined by $g_t(B_t^\sharp X, Y) = B_t(X,Y)$ for all vector fields $X,Y$ on $M$. These new parameters are 
a set of vertical vector fields, paired with $g_0$-horizontal lifts of vector fields from $N$, defined in Theorem \ref{corbsharpglobal}. In Section \ref{secpresgeom}, we find a simple formula for variation of the second fundamental form of the fibers, in terms of those vector fields, and conditions for keeping the fibers totally geodesic, totally umbilical or minimal - which require the vector fields to be Killing, conformal Killing or divergence-free (respectively), when restricted to the fibers. 

In Section \ref{secvarprobs} we examine variational problems for extrinsic geometry of the fibers of a Riemannian submersion $\pi$, considering $L_2$ norms of the second fundamental form and the mean curvature of the fibers as functionals on the space of those Riemannian metrics, that preserve the Riemannian submersion and are fixed along its fibers. These functionals can be also defined for a single fiber; their critical points, found in Section \ref{seccritpoints}, are metrics satisfying simply formulated conditions, respectively: vanishing of a certain divergence-type operator on the second fundamental form, and the mean curvature of the fibers being a projectable vector field. The choice of parameters determining variations allows to easily compute the second variations of the functionals in Section \ref{secsecondvar}, 
which at all critical points are non-negative. 

In Section \ref{secseccurv} we consider how the sectional curvatures in horizontal and vertical directions change with variations of metric preserving a Riemannian submersion.
In Section \ref{seccurvXY} we examine the sectional curvature of the horizontal lift of a local plane field on $N$ and its variations with respect to the considered changes of metric, that preserve the Riemannian submersion, but move the horizontal lift with respect to the fibers. 
This variational problem admits as critical points only metrics 
for which the distribution arising as the horizontal lift of the plane field satisfies a certain integrability condition. In general, variation of such curvature at $x \in M$ depends both on behaviour of vector fields defining the plane on $N$ in a neighbourhood of $\pi(x)$, and on behaviour of vector fields defining the variation in a neighbourhood of $x$. On the other hand, variational problem for integrated horizontal scalar curvature (i.e., the sum of sectional curvatures of planes defined by an orthogonal set of horizontal vectors) on a compact domain of a Riemannian submersion is well posed and reduces to a problem for the norm of the integrability tensor of the horizontal distribution, examined e.g., in \cite{rz-2,TZCMS}. 

In Section \ref{seccurvXU} we consider variations of the vertizontal curvatures, i.e., sectional curvatures of planes spanned by a horizontal and a vertical vector. Assuming existence of certain Killing fields on the fibers, in Corollary \ref{corpressecUXex} we demonstrate existence of Riemannian submersions with totally geodesic fibers and vertizontal curvatures non-constant on a fiber. This construction can be applied to modify e.g., the Hopf fibrations by $3$-spheres, producing new fat Riemannian submersions \cite{ZillerFatness}.

\section{Notation and definitions} \label{secnotation}

In what follows, $M$ and $N$ are smooth, connected and oriented manifolds, and all functions and tensor fields on them are always assumed to be smooth. For a manifold $M$, $\mathfrak{X}_M$ denotes the set of vector fields on $M$; for a distribution ${\mD}$ on $M$ (i.e., a subbundle of the tangent bundle $TM$) the set of vector fields with values in ${\mD}$ will be denoted by $\mathfrak{X}_{{\mD}}$. By $\mD_x$ we denote the distribution $\mD$ at the point $x \in M$, i.e., the subspace $\mD_x \subset T_xM$ spanned by values at $x$ of all vector fields in $\mathfrak{X}_{\mD}$.

For a Riemannian metric $g_0$ on a manifold $M$ with a distribution $\mD$, by $\Riem(M, {\mD}, g_0)$ we denote the set of all Riemannian metrics $g$ on $M$, such that $g(X,Y) = g_0(X,Y)$ for all $X,Y \in \mathfrak{X}_{{\mD}}$. We will consider one-parameter families of metrics $\{ g_t , t \in (-\epsilon, \epsilon) \}$ 
in $\Riem(M, {\mD}, g_0)$, 
which are called variations of $g_0$, and are always assumed to smoothly depend on the parameter $t$. The first and the second derivative with respect to $t$ will be denoted by $\dt$ and $\ddt$, respectively. For every variation $g_t$, let $B_t = \dt g_t$ and let $B_t^\sharp$ be the tensor field defined by $g_t(B_t^\sharp X, Y) = B_t(X,Y)$ for all $X,Y \in \mathfrak{X}_M$. Let $\nabla^t$ be the Levi-Civita connection of $g_t$. We say that a vector field $X \in \mathfrak{X}_M$ is bounded on $(M,g)$ if $g(X,X)$ is a bounded function on $M$.

We recall some information about Riemannian submersions \cite{O'Neill}. 
A Riemannian submersion $\pi : (M,g) \rightarrow (N, g_N)$ is a smooth map between Riemannian manifolds such that rank of $\pi_*$ is equal to $\dim N$ and $\pi_*$ is an isometry on vectors orthogonal to the fibers; more precisely, for all vector fields $X,Y$ on $M$, such that $g(X,\xi) = 0 = g(Y, \xi)$ for all $\xi \in \ker \pi_*$, we have
\[
g(X,Y) = g_N (\pi_* X, \pi_* Y).
\]
The distribution $\mV = \ker \pi_*$ is called vertical, its $g$-ortogonal complement $\mH$ is called the $g$-horizontal distribution; a vector field is called vertical or $g$-horizontal if it has values in the corresponding distribution. We will use notation $n=\dim \mV$ and $n+p = \dim M$. We say that a vector field $X \in \mathfrak{X}_M$ is 
projectable if $\pi_* X$ is well defined; $X$ is 
projectable if and only if $[X, V] \in \mathfrak{X}_{\mV}$ for all $V \in \mathfrak{X}_{\mV}$. For every $W \in \mathfrak{X}_N$ there exists a $g$-horizontal lift of $W$, i.e., a unique $g$-horizontal vector field $\pi^* W$ on $M$ such that $\pi_* (\pi^* W) =W$. For all $x \in N$, let $\mF_x = \pi^{-1}(\{x\})$ be the fiber of $\pi$ over $x$.

Let $\pi : (M, g_0) \rightarrow (N, g_N)$ be a Riemannian submersion.
For a one-parameter family $\{ g_t, t \in (-\epsilon, \epsilon) \} \subset \Riem(M, 
\mV , g_0)$, let $\mH(t)$ denote the $g_t$-horizontal distribution of $\pi$.
Let $P_{\mV}^t$ and $P_{\mH}^t $ denote the $g_t$-orthogonal projections onto the distributions $\mV$ and $\mH(t)$, respectively. We shall consider sets of linearly independent
vector fields $\{ E_1, \ldots, E_n , \ee_{n+1} , \ldots \ee_{n+p} \}$ defined on an open subset of $M$, where $E_a \in \mathfrak{X}_{\mV}$ for all $a \in \{1, \ldots, n\}$; we will call them local adapted frames and denote such a frame shortly by $\{E_a, \ee_i\}$. If vector fields in a frame $\{E_a, \ee_i\}$ are $g$-orthonormal, we will call such frame $g$-orthonormal. For indices we use the convention $a,b,c \in \{1, \ldots, n\}$, $i,j,k \in \{n+1, \ldots, n+p\}$ and $\mu, \nu \in \{1, \ldots, n+p\}$.

The second fundamental form ${\tilde h}_t$ and the integrability tensor ${\tilde T}_t$ of the distribution $\mH(t)$ are defined by formulas:
\[
{\tilde h}_t(X,Y) = \frac{1}{2} P_{\mV}^t (\nabla^t_X Y + \nabla^t_Y X), \quad {\tilde T}_t(X,Y) = \frac{1}{2} P_{\mV}^t [X,Y]
\]
for all $X,Y \in \mathfrak{X}_{\mH(t)}$. We also define tensor fields $\tA$ and $\tT$ by formulas
\[
g_t(\tA_{Z} X, Y) = g_t({\tilde h}_t(X,Y) ,Z) , \quad g_t(\tT_{Z} X, Y) = g_t({\tilde T}_t(X,Y) ,Z)
\]
for all $X,Y \in \mathfrak{X}_{\mH(t)}$ and all $Z \in \mV$. If $\pi : (M, g_t) \rightarrow (N, g_N)$ is a Riemannian submersion, we have ${\tilde h}_t =0$ and $\tA =0$ \cite{O'Neill}. 
The second fundamental form of the fibers of $\pi : (M, g_t) \rightarrow (N, g_N)$ is defined as
\[
h_t(X,Y) = P_{\mH}^t \nabla_{X} Y  
\]
for all $X,Y \in \mathfrak{X}_{\mV}$; we have $h_t(X,Y)=h_t(Y,X)$.
We say that the fibers of $\pi : (M, g_t) \rightarrow (N, g_N)$ 
are totally geodesic if for all $X,Y \in \mathfrak{X}_{\mV}$ we have 
$h_t(X,Y)=0$.
Let 
$\{E_1 ,  \ldots , E_n\}$ 
be a local $g_t$-orthonormal frame spanning $\mV$, we define the mean curvature of the fibers as 
\[
H_t = \sum\nolimits_{a=1}^n h_t(E_a, E_a). 
\]
We say that $\pi : (M, g_t) \rightarrow (N, g_N)$ has minimal fibers if $H_t=0$. We say that $\pi : (M, g_t) \rightarrow (N, g_N)$ has totally umbilical fibers if for all $X,Y \in \mathfrak{X}_{\mV}$ we have $h_t(X,Y) = \frac{1}{n} g_t(X,Y) H_t$. 

Recall that for $x \in N$, $\mF_x = \pi^{-1}(\{x\})$. 
We define the vertical divergence of a vector field $X \in \mathfrak{X}_{M}$ by 
\[
\Div^t_{\mV} X = \sum\nolimits_{a=1}^n g_t(\nabla^t_{E_a} X, E_a),
\]
where 
$\{E_1 ,  \ldots , E_n\}$  
are local $g_t$-orthonormal vertical vector fields. Let $X \in \mathfrak{X}_{\mF_x}$, then $\Div^t_{\mV} X$ defined by the above formula is the divergence of $X$ on the Riemannian manifold $(\mF_x, g_t \vert_{\mF_x})$ and if $\Div^t_{\mV} X =0$, we will say that such $X$ is divergence-free on $(\mF_x, g_t \vert_{\mF_x})$.

We will also use another divergence-type operator, $\delta_{t} : \mS_{r,2}(\mF_x) \rightarrow \mT_{r,1}(\mF_x)$, acting from the space $\mS_{r,2}(\mF_x)$ of symmetric $(r,2)$-tensor fields on $\mF_x$ to the space $\mT_{r,1}(\mF_x)$ of $(r,1)$-tensor fields on $\mF_x$,
defined by the formula 
\[
(\delta_{t} S)(X) = \sum\nolimits_{a=1}^n ( \nabla^t_{E_a} S)(E_a, X) 
\] 
for all $X \in \mathfrak{X}_{\mF_x}$ and $S \in \mS_{r,2}(\mF_x)$. We note that for $r=0$, $\mT_{0,1}(\mF_x)=\Omega^1(\mF_x)$ is the space of $1$-forms on $\mF_x$ and the formal adjoint of $\delta_{t}$ is given by $( \delta^*_t \omega)(X,Y) = \frac{1}{2} (\mathcal{L}_{\omega^\sharp} g_t)(X,Y)$, for all $\omega \in \Omega^1(\mF_x)$ and all $X,Y \in \mathfrak{X}_{\mF_x}$, where $\mathcal{L}_{\omega^\sharp} g_t$ is the Lie derivative of $g_t$ with respect to vector field $\omega^\sharp$, which is defined by $g_t( \omega^\sharp , Z)=\omega(Z)$ for all $Z \in \mathfrak{X}_{\mF_x}$ \cite{Besse}.

\section{Variations of metric}

\subsection{Preserving Riemannian submersions}

\begin{definition}
Let $\pi : (M, g_0) \rightarrow (N, g_N)$ be a Riemannian submersion. 
We say that a 
variation $\{ g_t, t \in (-\epsilon, \epsilon) \} \subset \Riem(M, 
\mV , g_0)$ preserves the Riemannian submersion $\pi$, if $\pi : (M, g_t) \rightarrow (N, g_N)$ is a Riemannian submersion for all $t \in (-\epsilon, \epsilon)$.
\end{definition}

\begin{theorem} \label{thBXYriem}
Let  $\pi : (M, g_0) \rightarrow (N, g_N)$ be a Riemannian submersion. 
A 
variation $\{ g_t, t \in (-\epsilon, \epsilon) \} \subset \Riem(M, \mV , g_0)$ preserves the Riemannian submersion $\pi$
if and only if for all $t \in (-\epsilon, \epsilon)$
\begin{equation} \label{BXYfields}
B_t(X,Y) = B_t(P_{\mH}^t X , P_{\mV}^t Y ) + B_t(P_{\mH}^t Y, P_{\mV}^t X) \quad {\rm for \; all } \; X,Y \in \mathfrak{X}_{M} .
\end{equation}
\end{theorem}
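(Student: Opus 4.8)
The plan is to reduce the stated tensorial identity \eqref{BXYfields} to the simple condition that $B_t$ vanishes on pairs of $g_t$-horizontal vectors, and then to recognize that this vanishing is precisely the infinitesimal form of the Riemannian submersion property. To see the reduction, note first that since $g_t(V,W) = g_0(V,W)$ for all $t$ and all $V, W \in \mathfrak{X}_{\mV}$ (because $g_t \in \Riem(M,\mV,g_0)$ and the distribution $\mV$ does not depend on $t$), differentiating in $t$ gives $B_t(V,W) = 0$ for all $V, W \in \mathfrak{X}_{\mV}$. Hence, using the $g_t$-orthogonal splitting $X = P_{\mH}^t X + P_{\mV}^t X$, bilinearity, and the symmetry of $B_t$,
\[
B_t(X,Y) = B_t(P_{\mH}^t X, P_{\mH}^t Y) + B_t(P_{\mH}^t X, P_{\mV}^t Y) + B_t(P_{\mH}^t Y, P_{\mV}^t X),
\]
so that, for a fixed $t$, \eqref{BXYfields} holds for all $X, Y \in \mathfrak{X}_M$ if and only if $B_t(\hat X, \hat Y) = 0$ for all $\hat X, \hat Y \in \mathfrak{X}_{\mH(t)}$. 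On the other hand, since $\mV = \ker \pi_*$ and the restrictions $g_t \vert_{\mV}$ are all equal to $g_0 \vert_{\mV}$, the map $\pi : (M,g_t) \to (N, g_N)$ is a Riemannian submersion if and only if $g_t(\hat X, \hat Y) = g_N(\pi_* \hat X, \pi_* \hat Y)$ for all $\hat X, \hat Y \in \mathfrak{X}_{\mH(t)}$.

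The heart of the argument is then a pointwise computation. Fix $x \in M$ and $u, v \in T_{\pi(x)} N$, and for each $t$ let $\hat u(t), \hat v(t)$ be the $g_t$-horizontal lifts of $u, v$ at $x$, i.e. the unique vectors in $\mH(t)_x$ with $\pi_* \hat u(t) = u$ and $\pi_* \hat v(t) = v$. These are smooth curves in the fixed vector space $T_x M$, because $\mH(t)_x$ is the $g_t$-orthogonal complement of the $t$-independent subspace $\mV_x$ and $t \mapsto g_t$ is smooth. Since $\pi_*(\hat u(t)) = u$ does not depend on $t$, we get $\pi_*(\dt \hat u(t)) = 0$, so $\dt \hat u(t) \in \mV_x$, and likewise $\dt \hat v(t) \in \mV_x$. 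Differentiating $t \mapsto g_t(\hat u(t), \hat v(t))$ and noting that the terms $g_t(\dt \hat u(t), \hat v(t))$ and $g_t(\hat u(t), \dt \hat v(t))$ both vanish, each being the $g_t$-pairing of a vertical vector with a $g_t$-horizontal one, we obtain
\[
\dt \big( g_t(\hat u(t), \hat v(t)) \big) = B_t(\hat u(t), \hat v(t)).
\]

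Finally I would assemble the equivalence. At $t = 0$ the vectors $\hat u(0), \hat v(0)$ are $g_0$-horizontal and $g_0$ is a Riemannian submersion, so $g_0(\hat u(0), \hat v(0)) = g_N(u,v)$. Together with the last display this shows that $g_t(\hat u(t), \hat v(t)) = g_N(u,v)$ for all $t$ if and only if $B_t(\hat u(t), \hat v(t)) = 0$ for all $t$. Letting $x$, $u$, $v$ range over all their values and using the two observations of the first paragraph, the left-hand condition says exactly that $\pi : (M,g_t) \to (N,g_N)$ is a Riemannian submersion for every $t$, while the right-hand condition says exactly that \eqref{BXYfields} holds for every $t$; this is the assertion of the theorem. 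I expect the only points needing care to be the smooth $t$-dependence of the horizontal lifts (immediate from smoothness of $t \mapsto g_t$ and constancy of $\mV$) and the vanishing of the two cross terms above — it is precisely this vanishing, a vertical vector paired against a $g_t$-horizontal one, that makes $\dt(g_t(\hat u(t), \hat v(t)))$ collapse to $B_t(\hat u(t), \hat v(t))$ even though $\hat u(t)$ and $\hat v(t)$ themselves move with $t$.
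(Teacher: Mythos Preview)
Your proof is correct and follows the same strategy as the paper: reduce \eqref{BXYfields} to the vanishing of $B_t$ on $g_t$-horizontal pairs, then identify this with the $t$-derivative of the Riemannian-submersion condition. Your execution is a clean frame-free variant --- where the paper differentiates $g_t(P_{\mH}^t X, P_{\mH}^t Y)$ for fixed $X,Y$ via a vertical orthonormal frame to reach \eqref{BXY}, you differentiate $g_t(\hat u(t),\hat v(t))$ for $g_t$-horizontal lifts of fixed base vectors, so the cross terms drop out at once from $\dt\hat u(t)\in\mV_x$.
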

\begin{proof}
Using $\{ g_t, t \in (-\epsilon, \epsilon) \} \subset \Riem(M, \mV , g_0)$, we obtain 
 \begin{equation} \label{BVVzero}
B_t(U,V) = 0 \quad {\rm for \; all } \; U,V \in \mathfrak{X}_{\mV}
\end{equation} 
from $B_t(U,V) = \dt g_t(U,V) = \dt g_0(U,V)=0$. 

Using $P_{\mV}^t Z + P_{\mH}^t Z = Z$ and $\pi_* P_{\mV}^t  Z =0$ for all $Z \in \mathfrak{X}_M$ and all $t \in (-\epsilon, \epsilon)$, we obtain that 
$\pi : (M, g_t) \rightarrow (N, g_N)$ is a Riemannian submersion if and only if for all $X,Y \in \mathfrak{X}_M$ we have
\begin{eqnarray*}
g_t(P_{\mH}^t X, P_{\mH}^t Y) &=& g_N (\pi_* P_{\mH}^t  X, \pi_*  P_{\mH}^t  Y ) = g_N (\pi_* X, \pi_* Y ) \\ 
&=& g_N(\pi_* P_{\mH}^0  X, \pi_* P_{\mH}^0  Y) = g_0(P_{\mH}^0 X, P_{\mH}^0 Y). 
\end{eqnarray*}
The above holds if and only if for all $x \in M$, $X,Y \in T_xM$ and every $g_0$-orthonormal -- and hence $g_t$-orthonormal for all $t \in  (-\epsilon , \epsilon)$, because $\{ g_t, t \in (-\epsilon, \epsilon) \} \subset \Riem(M, \mV , g_0)$ -- basis 
$\{E_1 ,  \ldots , E_n\}$ of ${\mV}_x$ we have
\begin{eqnarray} \label{BXY}
0 &=&  \dt g_t( P_{\mH}^t  X, P_{\mH}^t Y ) = \dt \big( g_t (X - \sum\nolimits_{a=1}^n g_t(X, E_a)E_a \, , Y- \sum\nolimits_{a=1}^n g_t(Y, E_b)E_b  ) \big) \nonumber \\
&=& B_t(X,Y) - \dt \sum\nolimits_{a=1}^n g_t(X,E_a) g_t (E_a,Y) - \dt \sum\nolimits_{a=1}^n g_t(Y, E_b) g_t (E_b, X) \nonumber \\
&& + \dt \sum\nolimits_{a,b=1}^n g_t(X, E_a) g_t(E_b , Y) g_t(E_a, E_b)  \nonumber \\
&=& B_t(X,Y) - \sum\nolimits_{a=1}^n B_t(X,E_a) g_t(Y,E_a) - \sum\nolimits_{a=1}^n B_t(Y,E_a) g_t(X,E_a),
\end{eqnarray}
where we used $g_t(E_a, E_b) = g_0(E_a, E_b) = \delta_{ab}$. 
From \eqref{BXY}, $P_{\mV}^t Z = \sum\nolimits_{a=1}^n g_t(Z, E_a)E_a$, $Z = P_{\mV}^t Z + P_{\mH}^t Z$ for all $Z \in \mathfrak{X}_M$ and \eqref{BVVzero}, we obtain \eqref{BXYfields}.
\end{proof} 

In particular, from \eqref{BXYfields} follows that every 
variation $\{ g_t, t \in (-\epsilon, \epsilon) \} \subset \Riem(M, \mV , g_0)$, 
that preserves the Riemannian submersion $\pi$, 
is determined by values of $B_t(Z, V)$ for all $Z \in \mathfrak{X}_M$ and $V \in \mathfrak{X}_{\mV}$. 

\begin{theorem} \label{thbailocal}
Let  $\pi : (M, g_0) \rightarrow (N, g_N)$ be a Riemannian submersion. 
A 
variation $\{ g_t, t \in (-\epsilon, \epsilon) \} \subset \Riem(M, \mV , g_0)$ preserves the Riemannian submersion $\pi$
if and only if in every local adapted $g_0$-orthonormal frame $\{ E_a , \ee_i \}$ components $g_{ab}(t) = g_t(E_a, E_b)$, $g_{ai}(t)= g_t(E_a, \ee_i)$, $g_{ij}(t) = g_t(\ee_i ,\ee_j)$ of $g_t$ are the solution, on interval $t\in(-\epsilon, \epsilon)$, of 
the following system: 
\begin{eqnarray} \label{bxy1}
&& g_{ab} = \delta_{ab} \\ \label{bxy2} 
&& \dt g_{ai} = b_{ai} \\ \label{bxy3}
&& \dt g_{ij} = \sum\nolimits_{a=1}^n \left( b_{ia} g_{aj} + b_{ja} g_{ai} \right) ,
\end{eqnarray}
with initial conditions $g_{ai}(0)=0$ and $g_{ij}(0)=\delta_{ij}$, where $b_{ai}(t) = b_{ia}(t) = B_t(E_a, \ee_i)$. 
\end{theorem}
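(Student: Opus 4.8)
The plan is to show that the system \eqref{bxy1}--\eqref{bxy3} is simply a re-encoding of the tensorial condition \eqref{BXYfields} from Theorem \ref{thBXYriem} in a fixed $g_0$-orthonormal adapted frame, together with the fact that a variation in $\Riem(M,\mV,g_0)$ is completely determined by the functions $g_{ab}(t), g_{ai}(t), g_{ij}(t)$ in such a frame (since these functions, being smooth in $t$ with prescribed initial values, recover $g_t$ pointwise). First I would note that membership in $\Riem(M,\mV,g_0)$ is equivalent to $g_{ab}(t)=g_t(E_a,E_b)=g_0(E_a,E_b)=\delta_{ab}$ for all $t$ (because the $E_a$ are vertical and $g_0$-orthonormal), which is exactly \eqref{bxy1}, and it forces the initial conditions $g_{ai}(0)=g_0(E_a,\ee_i)=0$, $g_{ij}(0)=g_0(\ee_i,\ee_j)=\delta_{ij}$. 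So it remains to translate the condition ``$\pi$ is preserved'' into \eqref{bxy2}--\eqref{bxy3}.

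Next I would invoke Theorem \ref{thBXYriem}: the variation preserves $\pi$ iff \eqref{BXYfields} holds, equivalently (as already observed after that theorem, and as follows from \eqref{BXY}) iff for every $g_0$-orthonormal vertical basis $\{E_a\}$ one has $B_t(X,Y)=\sum_a B_t(X,E_a)g_t(Y,E_a)+\sum_a B_t(Y,E_a)g_t(X,E_a)$ for all $X,Y$. Evaluating this on frame vectors: taking $X=E_a, Y=E_b$ recovers $B_t(E_a,E_b)=0$, i.e. $\dt g_{ab}=0$, consistent with \eqref{bxy1}; taking $X=E_a, Y=\ee_i$ gives $B_t(E_a,\ee_i)=\sum_c B_t(\ee_i,E_c)g_t(E_a,E_c)=B_t(\ee_i,E_a)$, a tautology that just records $\dt g_{ai}=B_t(E_a,\ee_i)=:b_{ai}$, which is \eqref{bxy2}; and taking $X=\ee_i, Y=\ee_j$ gives $B_t(\ee_i,\ee_j)=\sum_a B_t(\ee_i,E_a)g_t(\ee_j,E_a)+\sum_a B_t(\ee_j,E_a)g_t(\ee_i,E_a)=\sum_a(b_{ia}g_{aj}+b_{ja}g_{ai})$, i.e. $\dt g_{ij}=\sum_a(b_{ia}g_{aj}+b_{ja}g_{ai})$, which is \eqref{bxy3}. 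Conversely, since a symmetric bilinear form on $T_xM$ is determined by its values on the basis $\{E_a,\ee_i\}$, the three families of scalar equations reproduce \eqref{BXYfields} bilinearly, so the ODE system is equivalent to preservation of $\pi$.

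For the ``if'' direction I would also remark that, given arbitrary smooth functions $b_{ai}(t)$, the system \eqref{bxy1}--\eqref{bxy3} is a linear ODE in the unknowns $(g_{ai},g_{ij})$ with smooth coefficients, hence has a unique solution on $(-\epsilon,\epsilon)$ with the stated initial data; one should check this solution stays positive-definite near $t=0$ (it does, by continuity from $g_0$), so that it genuinely defines a Riemannian metric $g_t$, and that the construction is frame-independent in the sense that it patches to a global variation (the tensorial statement \eqref{BXYfields} being frame-free guarantees this). The main subtlety — rather than a genuine obstacle — is bookkeeping: one must be careful that the $b_{ai}(t)$ are the \emph{components of $B_t$ in the frame}, not components of $B_t^\sharp$, and that the identification ``$g_t$ is recovered from its frame components'' uses crucially that $\{E_a,\ee_i\}$ is a fixed ($t$-independent) frame while only the metric coefficients evolve. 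No step requires more than routine linear algebra once \eqref{BXYfields} is in hand.
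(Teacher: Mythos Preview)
Your proposal is correct and follows essentially the same route as the paper: both reduce Theorem \ref{thBXYriem}'s condition \eqref{BXYfields} (equivalently \eqref{BXY}) to its evaluations on the frame vectors $\{E_a,\ee_i\}$, observing that the mixed case $X=E_a,\,Y=\ee_i$ is a tautology imposing no constraint on $b_{ai}$, while $X=\ee_i,\,Y=\ee_j$ gives \eqref{bxy3}. Your closing paragraph on ODE existence and positive-definiteness is extraneous here---the theorem as stated characterizes an already-given variation $g_t$ rather than constructing one from prescribed $b_{ai}$---so that discussion belongs (and indeed appears in the paper) only later, in Lemma \ref{lemlambdalocal}.
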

\begin{proof}
For an adapted $g_0$-orthonormal frame $\{E_a, \ee_i\}$, equation \eqref{bxy1} is equivalent to \eqref{BVVzero} and $\{E_a\}$ being $g_0$-orthonormal. 
Equation \eqref{bxy2} is equivalent to $\dt g_t(E_a, \ee_i) = B_t(E_a, \ee_i)$,
and \eqref{bxy3} is equivalent to \eqref{BXY}, written for $X=\ee_i$, $Y=\ee_j$. We note that, due to \eqref{BVVzero}, equation \eqref{BXY} written for $X=\ee_i$ and $Y=E_b$ yields \eqref{bxy2}, i.e., it does not impose any conditions on $\{ b_{ai} \}$.
\end{proof}

\begin{lemma} \label{lemlambdalocal}
Let  $\pi : (M, g_0) \rightarrow (N, g_N)$ be a Riemannian submersion. 
Let $W \subset M$ be an open set on which there exists an adapted $g_0$-orthonormal frame $\{ E_a , \ee_i \}$. Let $\{ \lambda_{ai} : W \rightarrow \mathbb{R} \}$ be smooth functions. Let $U$ be an open, relatively compact set, such that $\overline{U} \subset W$. Then there exists a variation $\{ g_t, t \in (-\epsilon, \epsilon) \} \subset \Riem(U, \mV , g_0)$, that preserves the Riemannian submersion $\pi$, such that $\{ g_{\mu \nu}(t) \}$ is the solution of the following system
\begin{eqnarray} \label{bsharp1}
&& g_{ab} = \delta_{ab} \\ \label{bsharp2}
&& \dt g_{ai} = \sum\nolimits_{j=n+1}^{n+p} \left( \lambda_{aj} g_{ij} - \sum\nolimits_{b=1}^n \lambda_{aj} g_{jb} g_{bi}  \right) \\
\label{bsharp3}
&& \dt g_{ij} = \sum\nolimits_{a=1}^n \sum\nolimits_{k=n+1}^{n+p} \big(
 g_{ik} g_{ja} \lambda_{ak} + g_{jk} g_{ia} \lambda_{ak} \nonumber \\
&& - \sum\nolimits_{c=1}^n \left( g_{jc} g_{ia} g_{kc} \lambda_{ak} + g_{ic} g_{ja} g_{kc} \lambda_{ak} \right) \big),
\end{eqnarray}
with $g_{\mu \nu}(0) = \delta_{\mu \nu}$, and $\{ P_{\mH}^t \ee_{n+1}, \ldots , P_{\mH}^t \ee_{n+p} \}$ are linearly independent for all $t \in (-\epsilon, \epsilon)$.
\end{lemma}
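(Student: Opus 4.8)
The plan is to regard the system \eqref{bsharp1}--\eqref{bsharp3} as an ordinary differential equation in the variable $t$, separately at each fixed point $x\in W$, for the unknowns $t\mapsto\bigl(g_{ai}(t),g_{ij}(t)\bigr)$; the remaining components $g_{ab}(t)\equiv\delta_{ab}$ are prescribed by \eqref{bsharp1} and enter \eqref{bsharp3} only as constants. The right-hand sides of \eqref{bsharp2} and \eqref{bsharp3} are polynomials in $g_{ai},g_{ij}$ whose coefficients are the smooth functions $\lambda_{ai}$, so the system reads $\dt g=F(x,g)$ with $F$ smooth and globally defined on $W\times\RR^{N}$, where $N$ is the number of unknowns. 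By the standard existence, uniqueness and smooth-dependence theory for ODEs, through each $x\in W$ there passes a unique maximal solution with $g_{\mu\nu}(0,x)=\delta_{\mu\nu}$, and these solutions assemble into a map $(t,x)\mapsto g_{\mu\nu}(t,x)$ that is smooth on an open subset of $\RR\times W$ containing $\{0\}\times W$. Since $\overline U$ is compact and $\overline U\subset W$, the compact set $\{0\}\times\overline U$ lies in that open subset, hence so does $(-\epsilon,\epsilon)\times\overline U$ for some $\epsilon>0$; restricting to $U$ gives a smooth family $\{g_{\mu\nu}(t)\}_{t\in(-\epsilon,\epsilon)}$ with $g_{\mu\nu}(0)=\delta_{\mu\nu}$ solving \eqref{bsharp1}--\eqref{bsharp3}.

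Next I would check that these components define a Riemannian metric $g_t$ on $U$. Symmetry $g_{ij}=g_{ji}$ persists along the flow because the right-hand side of \eqref{bsharp3} is symmetric under $i\leftrightarrow j$ and $g_{ij}(0)=\delta_{ij}$; positive definiteness of $\bigl(g_{\mu\nu}(t)\bigr)$ holds at $t=0$ and is an open condition, so after possibly shrinking $\epsilon$ (again invoking compactness of $\overline U$ and continuity in $(t,x)$) it holds on all of $(-\epsilon,\epsilon)\times\overline U$. Because $\{E_a,\ee_i\}$ is $g_0$-orthonormal, the metric at $t=0$ is exactly $g_0$ on $U$; and because $g_{ab}(t)\equiv\delta_{ab}$, the $g_t$-Gram matrix of any two vertical vector fields equals their $g_0$-Gram matrix, so $\{g_t,t\in(-\epsilon,\epsilon)\}\subset\Riem(U,\mV,g_0)$. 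The vector fields $P_{\mH}^t\ee_{n+1},\dots,P_{\mH}^t\ee_{n+p}$ are linearly independent for every such $t$: since $P_{\mH}^t\ee_i-\ee_i=-P_{\mV}^t\ee_i\in\mV=\ker\pi_*$, at each $x$ we get $(\pi_*)_x(P_{\mH}^t\ee_i)_x=(\pi_*)_x(\ee_i)_x$, and the latter $p$ vectors form a basis of $T_{\pi(x)}N$ because $\{(E_a)_x,(\ee_i)_x\}$ is a basis of $T_xM$ with $(E_a)_x$ spanning $\ker(\pi_*)_x$; injectivity of $(\pi_*)_x$ on $\mH(t)_x$ then forces the $(P_{\mH}^t\ee_i)_x$ to be independent.

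Finally I would deduce from Theorem \ref{thbailocal} that this variation preserves the Riemannian submersion. Put $b_{ai}(t):=\dt g_{ai}(t)$; since $g_{ai}(t)=g_t(E_a,\ee_i)$ this equals $B_t(E_a,\ee_i)$, the quantity appearing in Theorem \ref{thbailocal}. Equation \eqref{bxy1} is identical to \eqref{bsharp1}, \eqref{bxy2} is the definition of $b_{ai}$, and for \eqref{bxy3} one substitutes $b_{ia}=\dt g_{ia}$ (read off from \eqref{bsharp2}, using $g_{ia}=g_{ai}$) into $\sum_a\bigl(b_{ia}g_{aj}+b_{ja}g_{ai}\bigr)$, expands, and after relabelling dummy indices and using symmetry of $(g_{\mu\nu})$ recognizes the outcome as precisely the right-hand side of \eqref{bsharp3}, which $\{g_{\mu\nu}(t)\}$ satisfies by construction. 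Thus the components of $g_t$ in $\{E_a,\ee_i\}$ solve \eqref{bxy1}--\eqref{bxy3} with $b_{ai}(t)=B_t(E_a,\ee_i)$, so Theorem \ref{thbailocal} applies and $\{g_t\}$ preserves $\pi$. The only delicate point is the passage from pointwise-in-$x$ solvability to a single interval $(-\epsilon,\epsilon)$ of existence over $U$ together with the maintenance of positive definiteness there; both are handled by compactness of $\overline U$ and continuous dependence of ODE solutions on $x$. The identification of \eqref{bsharp2}--\eqref{bsharp3} with the system of Theorem \ref{thbailocal} is a routine index computation.
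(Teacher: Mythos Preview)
Your proof is correct and follows essentially the same approach as the paper: treat \eqref{bsharp1}--\eqref{bsharp3} as an ODE in $t$ with smooth dependence on $x$, use compactness of $\overline U$ to obtain a uniform $\epsilon$ and to maintain positive definiteness, and then identify the resulting system as an instance of \eqref{bxy1}--\eqref{bxy3} with $b_{ai}$ given by \eqref{bsharp2}, so that Theorem~\ref{thbailocal} applies. Your argument for the linear independence of $\{P_{\mH}^t\ee_i\}$ via $\pi_*$ is a slightly different (and arguably cleaner) route than the paper's, which instead writes $P_{\mH}^t\ee_i=\ee_i-\sum_a g_{ia}(t)E_a$ and uses that $\{E_a,\ee_i\}$ is a frame; both arguments work without further shrinking $\epsilon$ once $g_t$ is known to be positive definite.
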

\begin{proof}
We consider \eqref{bsharp2}-\eqref{bsharp3} as a system of ODEs, smoothly depending on parameter $x \in W$.
It follows that for every $x \in W$ there exists a neighbourhood $\mN_x$ on which the solution $\{ g_{\mu \nu}(t)\}$ of \eqref{bsharp1}-\eqref{bsharp3} with $g_{ai}(0)=0$ and $g_{ij}(0)=\delta_{ij}$ exists on interval $t \in (-\epsilon_x, \epsilon_x)$ for some $\epsilon_x>0$. By decreasing $\epsilon_x$, if necessary, we can assume that $\{ g_{\mu \nu}(t) \}$ are components of positive-definite matrix for all $t \in (-\epsilon_x, \epsilon_x)$, because $g_{\mu \nu}(0)=\delta_{\mu \nu}$, and, since $\{ \ee_{n+1}, \ldots, \ee_{n+p}\}$ are linearly independent, $\{ \ee_i - \sum\nolimits_{a=1}^n g_{ia}(t) E_a , i =n+1, \ldots, n+p \}$ are linearly independent on $\mN_x$ for all $t \in (-\epsilon_x, \epsilon_x)$. Then $\{ \mN_x , x \in W \}$ is an open cover of the compact set ${\overline{U}}$, so it has a finite subcover 
and hence there exists $\epsilon>0$ such that solution $\{ g_{\mu \nu}(t)\}$ with all above properties exists for all $t \in (-\epsilon, \epsilon)$ on $U$. Hence, $\{ g_{\mu \nu}(t) \}$ define a Riemannian metric $g_t$ on $U$, determined by its components in the frame $\{E_a, \ee_i\}$ and bilinearity.

To prove that $\{ g_t, t \in (-\epsilon, \epsilon) \} \subset \Riem(U, \mV , g_0)$ 
preserves the Riemannian submersion $\pi$, observe that \eqref{BVVzero} holds by \eqref{bsharp1} 
and 
 \eqref{bsharp1}-\eqref{bsharp3} is a particular case of \eqref{bxy1}-\eqref{bxy3}, with 
\begin{equation} \label{bbsharp} 
b_{ai} =  \sum\nolimits_{j=n+1}^{n+p} \left( \lambda_{aj} g_{ij} - \sum\nolimits_{b=1}^n \lambda_{aj} g_{jb} g_{bi}  \right).
\end{equation}
Hence, \eqref{BXY} holds for all vector fields from the adapted frame $\{E_a, \ee_i\}$, and hence for all vector fields on $U$.
The last claim follows from $P_{\mH}^t \ee_i = \ee_i - \sum\nolimits_{a=1}^n g_{ia}(t) E_a$.
\end{proof}

\begin{remark} \label{remBinv}
We note that Riemannian metrics obtained as solutions of 
system 
\eqref{bsharp1}-\eqref{bsharp3} do not depend on the choice of adapted $g_0$-orthonormal frame $\{E_a, \ee_i\}$ if coefficients $\{ \lambda_{ai} \}$ are properly transformed when changing the frame. Indeed, any two such frames $\{E_a, \ee_i\}$ and $\{ E'_c , \ee'_k \}$ are related by a $g_0$-orthogonal transformation preserving each of distributions $\mV$ and $\mH(0)$, so $\ee'_k = \sum\nolimits_{i=n+1}^{n+p} M_{ik} \ee_i$, $E'_c = \sum\nolimits_{a=1}^{n} M_{a c} E_a$, with $M_{\mu \nu}$ being coefficients of a matrix $M \in O(n) \times O(p)$. 
Hence, e.g., 
$g'_{kc} = \sum\nolimits_{i=n+1}^{n+p} \sum\nolimits_{a=1}^{n} M_{ik} M_{ac} g_{ai}$. 
Let the transformation rule for $\{ \lambda_{ai} \}$ 
be the following:
\begin{equation} \label{lambdatransform}
\lambda'_{ck} = \sum\nolimits_{i=n+1}^{n+p} \sum\nolimits_{a=1}^{n} \lambda_{ai} M_{ik} M_{ac}. 
\end{equation} 
Comparing the system 
\eqref{bsharp1}-\eqref{bsharp3} 
written 
in frames $\{E_a, \ee_i\}$ and $\{ E'_c , \ee'_k \}$, we get $\dt g'_{kc} = \sum\nolimits_{i=n+1}^{n+p} \sum\nolimits_{a=1}^{n} \dt (M_{ik} M_{ac} g_{ai})$ and $\dt g'_{kl} = \sum\nolimits_{i=n+1}^{n+p} \sum\nolimits_{j=n+1}^{n+p} \dt ( M_{ik} M_{jl} g_{ij})$, so from the uniqueness of solution of ODE system, we have $g'_{kc}(t) = \sum\nolimits_{i=n+1}^{n+p} \sum\nolimits_{a=1}^{n} M_{ik} M_{ac} g_{ai}(t)$ and $g'_{kl}(t) = \sum\nolimits_{i=n+1}^{n+p} \sum\nolimits_{j=n+1}^{n+p} M_{ik} M_{jl} g_{ij}(t)$ for all $t$, for which the solution exists.
\end{remark}

\begin{theorem} \label{corbsharpglobal}
Let  $\pi : (M, g_0) \rightarrow (N, g_N)$ be a Riemannian submersion.
For every pair of sets: $\{ V_{n+1} , \ldots , V_{n+p} \}$ of vertical vector fields, 
bounded 
on $(M,g_0)$, and $\{ W_{n+1} , \ldots , W_{n+p} \}$ of linearly independent vector fields, 
bounded 
on $(N,g_N)$, there exists a unique 
variation $\{ g_t, t \in (-\epsilon, \epsilon) \} \subset \Riem(M, \mV , g_0)$ that preserves the Riemannian submersion $\pi$, such that for every $X \in \mathfrak{X}_{\mV}$
\begin{equation} \label{bsharpinv}
B_t^\sharp X = \sum\nolimits_{i=n+1}^{n+p} g_0(V_i , X) P_{\mH}^t  \pi^* W_i ,
\end{equation}
where $\pi^* W_i$ is the unique $g_0$-horizontal lift of $W_i$, for all $i \in \{n+1, \ldots , n+p\}$.
\end{theorem}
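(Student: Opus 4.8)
The plan is to derive the statement from the local existence result of Lemma~\ref{lemlambdalocal} by making the right choice of the functions $\lambda_{ai}$, and then to assemble the resulting local variations into a single global one, using Remark~\ref{remBinv} (together with uniqueness of solutions of ODEs) for consistency on overlaps and the boundedness hypotheses to secure a common interval $(-\epsilon,\epsilon)$ of existence.

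First I would fix a local adapted $g_0$-orthonormal frame $\{E_a,\ee_i\}$ on an open set $W\subset M$ and compute the right-hand side of \eqref{bsharpinv} in this frame. Writing $\pi^*W_i=\sum_j g_0(\pi^*W_i,\ee_j)\,\ee_j$ (using that $\{\ee_j\}$ is $g_0$-orthonormal and $\pi^*W_i$ is $g_0$-horizontal) and $P_{\mH}^t\pi^*W_i=\pi^*W_i-\sum_b g_t(\pi^*W_i,E_b)E_b$, one obtains
\[
g_t\big(P_{\mH}^t\pi^*W_i,\ee_k\big)=\sum\nolimits_{j}g_0(\pi^*W_i,\ee_j)\Big(g_{jk}(t)-\sum\nolimits_{b}g_{jb}(t)g_{bk}(t)\Big) ,
\]
so pairing \eqref{bsharpinv} with $\ee_k$ puts $b_{ak}(t)=B_t(E_a,\ee_k)=g_t(B_t^\sharp E_a,\ee_k)$ into exactly the shape of \eqref{bbsharp} with $\lambda_{ai}=\sum_{j=n+1}^{n+p} g_0(V_j,E_a)\,g_0(\pi^*W_j,\ee_i)$, which are smooth functions on $W$. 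Lemma~\ref{lemlambdalocal}, applied with these $\lambda_{ai}$ on a relatively compact $U$ with $\overline U\subset W$, then yields a variation in $\Riem(U,\mV,g_0)$ preserving $\pi$ whose components solve \eqref{bsharp1}--\eqref{bsharp3}; since its $b_{ak}$ coincides with \eqref{bsharpinv} contracted with $\ee_k$, and $B_t$ is determined by the components $b_{ak}$ by Theorem~\ref{thBXYriem}, the identity \eqref{bsharpinv} holds for it (both sides of \eqref{bsharpinv} are $g_t$-horizontal and have the same $g_t$-pairing with every $\ee_k$).

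Next I would verify frame-independence. Under a change of adapted $g_0$-orthonormal frame $E'_c=\sum_a M_{ac}E_a$, $\ee'_k=\sum_j M_{jk}\ee_j$ with $M\in O(n)\times O(p)$, the above $\lambda_{ai}$ transform by \eqref{lambdatransform}, so by Remark~\ref{remBinv} the metrics built from two overlapping frames agree on the overlap; equivalently, \eqref{bsharpinv} with Theorem~\ref{thBXYriem} determines $B_t$ globally as a pointwise function of $g_t$, so $\dt g_t=B_t$ has at most one solution through $g_0$. Covering $M$ and patching, one obtains a variation $\{g_t\}\subset\Riem(M,\mV,g_0)$ preserving $\pi$ and satisfying \eqref{bsharpinv}, and it is the unique such variation. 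To obtain one $\epsilon>0$ valid on all of $M$, note that in a $g_0$-orthonormal frame $|\lambda_{ai}|\le\sum_j\sup_M|V_j|_{g_0}\,\sup_N|W_j|_{g_N}$, a bound independent of both the frame and the point, which, together with positivity of $g_0$, gives a uniform lower bound on the existence time in the proof of Lemma~\ref{lemlambdalocal}.

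The only genuine work is the bookkeeping of the first step, and I expect the main obstacle, if one proceeds strictly through Lemma~\ref{lemlambdalocal}, to be precisely this uniform-in-$M$ control of the existence interval, which is where the boundedness hypotheses enter. A shortcut removes it entirely: any submersion-preserving variation satisfies $g_t(P_{\mH}^t\ee_i,P_{\mH}^t\ee_j)=g_N(\pi_*\ee_i,\pi_*\ee_j)=\delta_{ij}$, i.e. $g_{ij}(t)-\sum_a g_{ai}(t)g_{aj}(t)\equiv\delta_{ij}$; substituting this into \eqref{bsharp2} collapses it to $\dt g_{ai}=\lambda_{ai}$, giving the explicit polynomial solution $g_{ai}(t)=t\lambda_{ai}$, $g_{ij}(t)=\delta_{ij}+t^2\sum_a\lambda_{ai}\lambda_{aj}$. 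For $\xi=\sum_a\xi^aE_a+\sum_i\xi^i\ee_i$ this yields $g_t(\xi,\xi)=\sum_a\big(\xi^a+t\sum_i\lambda_{ai}\xi^i\big)^2+\sum_i(\xi^i)^2$, which is non-negative and vanishes only at $\xi=0$, so $g_t$ is a Riemannian metric for every $t\in\RR$; the formula is manifestly frame-covariant, which makes the global patching and the verification of \eqref{bsharpinv} immediate.
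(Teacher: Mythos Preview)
Your first two paragraphs are essentially the paper's own argument: the same choice of $\lambda_{ai}$ (the paper writes it as \eqref{deflambdaai}, which in the orthonormal frame $\{\ee_i\}$ is exactly your formula), the same appeal to Lemma~\ref{lemlambdalocal}, the same verification of \eqref{bsharpinv} by comparing components, the same gluing via Remark~\ref{remBinv}, and the same use of boundedness to confine the parameters $\{\lambda_{ai}\}$ to a compact set and thereby extract a uniform $\epsilon$. Your explicit treatment of uniqueness (that \eqref{bsharpinv} together with Theorem~\ref{thBXYriem} makes $\dt g_t$ a pointwise function of $g_t$) is a small but welcome addition; the paper leaves this implicit.

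Your ``shortcut'' in the last paragraph, however, is genuinely new relative to the paper and is worth highlighting. The paper never records the algebraic identity $g_{ij}(t)-\sum_a g_{ai}(t)g_{aj}(t)=\delta_{ij}$ (which, as you say, is just the statement that $P_{\mH}^t\ee_i$ stays $g_t$-orthonormal for a submersion-preserving variation), and so never notices that the nonlinear system \eqref{bsharp2}--\eqref{bsharp3} with $t$-independent $\lambda_{ai}$ collapses to $\dt g_{ai}=\lambda_{ai}$ and hence has the explicit polynomial solution $g_{ai}(t)=t\lambda_{ai}$, $g_{ij}(t)=\delta_{ij}+t^2\sum_a\lambda_{ai}\lambda_{aj}$. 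Your computation $g_t(\xi,\xi)=\sum_a(\xi^a+t\sum_i\lambda_{ai}\xi^i)^2+\sum_i(\xi^i)^2$ is correct and shows positive-definiteness for every $t\in\RR$, so the shortcut actually yields a stronger conclusion than the theorem states (global-in-$t$ existence, with the boundedness hypotheses no longer needed for existence, only for the theorem as phrased). The paper's route has the mild advantage of not needing to guess the closed form and of working verbatim for $t$-dependent $\lambda_{ai}$, but on the problem as posed your shortcut is both simpler and sharper.
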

\begin{proof}
Let $\{ E_a , \ee_i \}$ be a local adapted 
$g_0$-orthonormal frame on an open set $U' \subset M$, and let $U$ be a relatively compact, open set, such that $\overline{U} \subset U'$. 
Then there exist functions $\{ \lambda_{ai}  : U' 
 \rightarrow \mathbb{R} \}$ such that
\begin{equation} \label{deflambdaai}
\sum\nolimits_{i=n+1}^{n+p} g_0(V_i , E_a) \pi^*(W_i) = \sum\nolimits_{i=n+1}^{n+p} \lambda_{ai} \ee_i .
\end{equation}
For these $\{\lambda_{ai}\}$, let $\{ g_t, t \in (-\epsilon', \epsilon') \} \subset \Riem(U, \mV , g_0)$ be the variation obtained from Lemma \ref{lemlambdalocal}.

From \eqref{bsharp2} it follows that coefficients $\{b_{ai}\}$ of $B_t = \dt g_t$ are given by \eqref{bbsharp}. From \eqref{bsharp1} we obtain that $B_t^\sharp E_a$ are $g_t$-horizontal, hence
\begin{equation} \label{Bsharptildelambda}
B^\sharp_t E_a = \sum\nolimits_{i=n+1}^{n+p} \left(  {\tilde \lambda}_{ai}(t) \ee_i - \sum\nolimits_{c=1}^n g_{ic}(t) {\tilde \lambda}_{ai}(t) E_c \right)
\end{equation}
for some $\{ \tilde \lambda_{ai} : U \times \mathbb{R} \rightarrow \mathbb{R} \}$. But obtaining $b_{ai} = B_t(E_a, \ee_i) = g_t(B^\sharp_t E_a , \ee_i)$ from \eqref{Bsharptildelambda} and comparing with \eqref{bbsharp}, we get for all $a \in \{1, \ldots ,n \}$ and all $i \in \{n+1 , \ldots, n+p\}$
\begin{equation} \label{lambdaunique}
\sum\nolimits_{j=n+1}^{n+p} ( {\tilde \lambda}_{aj}(t) - \lambda_{aj} ) ( g_{ij}(t) - \sum\nolimits_{b=1}^n g_{jb}(t) g_{bi}(t) ) = 0 .
\end{equation}
Since $g_{ij}(t) - \sum\nolimits_{b=1}^n g_{jb}(t) g_{bi}(t) = g_t( P_{\mH}^t  \ee_i , P_{\mH}^t  \ee_j)$ and, by Lemma \ref{lemlambdalocal}, 
$\{ P_{\mH}^t \ee_i \}$ are linearly independent, for all $t \in (-\epsilon', \epsilon')$ 
we have $\det g_t( P_{\mH}^t  \ee_i , P_{\mH}^t  \ee_j)  \neq 0$ and from \eqref{lambdaunique} it follows that ${\tilde \lambda}_{ai} = \lambda_{ai}$. Hence, 
\begin{equation} \label{bsharplambda} 
B^\sharp_t E_a = \sum\nolimits_{i=n+1}^{n+p}  \lambda_{ai} \left(  \ee_i - \sum\nolimits_{c=1}^n g_{ic}(t) E_c \right)
\end{equation} 
holds and, by \eqref{deflambdaai}, yields \eqref{bsharpinv}.

Now we prove that there exists $\epsilon>0$ such that at every $x \in M$ the solution of \eqref{bsharp1}-\eqref{bsharp3} exists for all $t \in (-\epsilon, \epsilon)$. Indeed, \eqref{bsharp1}-\eqref{bsharp3} are equations for coefficients of $g_t$ in a $g_0$-orthonormal frame, so the initial condition for these equations for all $x \in M$ is the same: $g_{ai}(0)=0$ and $g_{ij}(0)=\delta_{ij}$ for $a \in \{1, \ldots, n\}$ and $i,j \in \{n+1, \ldots, n+p\}$. Since $\{ V_i, W_i \}$ are bounded, from \eqref{deflambdaai} it follows that there exists a compact set $K \subset \mathbb{R}^{np}$, such that $\{ \lambda_{ai} \} \in K$ for all $x \in M$, 
and for all local $g_0$-orthonormal frames $\{E_a, \ee_i\}$. 
For every $\{ \lambda_{ai} \} \in K$ there exist $\epsilon' >0$ and an open neighbourhood $\mN \subset \mathbb{R}^{np}$ such that for all $\{ \lambda_{ai} \} \in \mN$ the solution of \eqref{bsharp1}-\eqref{bsharp3} with initial condition $g_{ai}(0)=0 , g_{ij}(0)=\delta_{ij}$ exists and, since the solutions depend smoothly on parameters $\{ \lambda_{ai} \}$ of the equations, 
$\det ( g_{ij}(t) - \sum\nolimits_{b=1}^n g_{jb}(t) g_{bi}(t) ) \neq 0$ for all $t \in (-\epsilon', \epsilon')$. Covering $K$ with these neighbourhoods and finding the smallest $\epsilon'$ for some finite subcover, we obtain an interval $(-\epsilon, \epsilon)$ on which $g_t$ is defined for all $x \in M$ as the metric whose components in a local frame $\{E_a , \ee_i \}$ are the solution of \eqref{bsharp1}-\eqref{bsharp3} with $\{ \lambda_{ai} \}$ defined by \eqref{deflambdaai}.
By Remark \ref{remBinv}, since $\{\lambda_{ai}\}$ defined by \eqref{deflambdaai}  satisfy the transformation rule \eqref{lambdatransform} and are obtained from globally defined vector fields, 
solutions obtained locally in adapted $g_0$-orthonormal frames give rise to a tensor field $g_t$ on $M$.
\end{proof}

\begin{remark} \label{remVit}
Let $U \subset N$ be an open set, on which linearly independent vector fields $W_{n+1}, \ldots , W_{n+p}$ are defined. Then on $\pi^{-1}(U)$ every one-parameter family of metrics
$\{ g_t, t \in (-\epsilon, \epsilon) \} \subset \Riem(M, \mV , g_0)$, that preserves the Riemannian submersion $\pi$, satisfies \eqref{bsharpinv} for some $t$-dependent vertical vector fields 
$\{ V_{n+1}(t) , \ldots , V_{n+p}(t) \}$ on $M$. 
This follows from $B_t^\sharp X$ being $g_t$-horizontal for every vertical $X$ and 
$\{ P_{\mH}^t \pi^* W_{n+1}, \ldots , P_{\mH}^t \pi^* W_{n+p} \}$ being a linear frame of distribution $\mH(t)$ on $\pi^{-1}(U)$.
\end{remark}

\begin{lemma} \cite{rz-2} \label{lemmaframe}
Let $\pi : (M,g_0) \rightarrow (N, g_N)$ be a Riemannian submersion and let 
$\{ g_t, t \in (-\epsilon, \epsilon) \} \subset \Riem(M, \mV , g_0)$ be a variation. 
If $\{ E_a, \ee_i \}$ is a local $g_0$-orthonormal frame on $M$ such that $E_a \in \mathfrak{X}_{\mV}$ for all $a \in \{ 1, \ldots ,n \}$, then $\{E_a(t) , \ee_i(t) \}$ such that
\begin{eqnarray*}
E_a(t) = E_a , \quad \ee_i(0) = \ee_i , \quad \dt \ee_i(t) = - \frac{1}{2} P_{\mH}^t  B_t^\sharp \ee(t) - P_{\mV}^t  B_t^\sharp \ee_i(t)
\end{eqnarray*}
is a local $g_t$-ortonormal frame on $M$.
\end{lemma}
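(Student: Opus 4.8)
The plan is to verify directly that the prescribed frame $\{E_a(t),\ee_i(t)\}$ is $g_t$-orthonormal by checking the three families of relations $g_t(E_a(t),E_b(t))=\delta_{ab}$, $g_t(E_a(t),\ee_i(t))=0$, $g_t(\ee_i(t),\ee_j(t))=\delta_{ij}$ one at a time. For each, I would differentiate the left-hand side in $t$, using $\dt g_t=B_t$ together with the given evolution equation $\dt\ee_i(t)=-\tfrac12 P_{\mH}^t B_t^\sharp\ee_i(t)-P_{\mV}^t B_t^\sharp\ee_i(t)$, and show the derivative vanishes; since all three relations hold at $t=0$ (where $\{E_a,\ee_i\}$ is $g_0$-orthonormal), they then hold for all $t$. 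Two preliminary observations are needed. First, because $\{g_t\}\subset\Riem(M,\mV,g_0)$ we have $B_t(U,V)=0$ for $U,V\in\mathfrak{X}_{\mV}$ as in \eqref{BVVzero}, i.e. $g_t(B_t^\sharp V,U)=0$ for all vertical $U$, so $B_t^\sharp$ maps $\mathfrak{X}_{\mV}$ into $\mathfrak{X}_{\mH(t)}$; in particular $P_{\mV}^t B_t^\sharp E_a=0$. Second, the evolution equation for $\ee_i(t)$ is linear in $\ee_i(t)$ with coefficients depending smoothly on $t$ (through $g_t$, hence through $B_t^\sharp$, $P_{\mH}^t$, $P_{\mV}^t$), so its solution with $\ee_i(0)=\ee_i$ exists on all of $(-\epsilon,\epsilon)$.

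The first relation is immediate: $E_a(t)=E_a\in\mathfrak{X}_{\mV}$, so $g_t(E_a,E_b)=g_0(E_a,E_b)=\delta_{ab}$. For the second, I would compute $\dt\, g_t(E_a,\ee_i(t))=B_t(E_a,\ee_i(t))+g_t(E_a,\dt\ee_i(t))$; since $E_a$ is vertical it is $g_t$-orthogonal to $\mH(t)$, so the $P_{\mH}^t$-term in $\dt\ee_i(t)$ contributes nothing, while $g_t(E_a,P_{\mV}^t B_t^\sharp\ee_i(t))=g_t(E_a,B_t^\sharp\ee_i(t))=B_t(E_a,\ee_i(t))$ (decompose $B_t^\sharp\ee_i(t)$ and use symmetry of $B_t$). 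The two terms cancel, so $g_t(E_a,\ee_i(t))$ is constant in $t$, equal to $g_0(E_a,\ee_i)=0$. In particular this shows $\ee_i(t)\in\mathfrak{X}_{\mH(t)}$ for all $t$, which is the key input for the next step.

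For the third relation I would differentiate $g_t(\ee_i(t),\ee_j(t))$ to obtain $B_t(\ee_i(t),\ee_j(t))+g_t(\dt\ee_i(t),\ee_j(t))+g_t(\ee_i(t),\dt\ee_j(t))$. Using that $\ee_j(t)$ is $g_t$-horizontal (just established), the $P_{\mV}^t$-part of $\dt\ee_i(t)$ pairs to zero against $\ee_j(t)$, while $g_t(P_{\mH}^t B_t^\sharp\ee_i(t),\ee_j(t))=g_t(B_t^\sharp\ee_i(t),\ee_j(t))=B_t(\ee_i(t),\ee_j(t))$, so $g_t(\dt\ee_i(t),\ee_j(t))=-\tfrac12 B_t(\ee_i(t),\ee_j(t))$, and symmetrically for the last term. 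The three contributions then sum to $B_t(\ee_i(t),\ee_j(t))-\tfrac12 B_t(\ee_i(t),\ee_j(t))-\tfrac12 B_t(\ee_i(t),\ee_j(t))=0$, so $g_t(\ee_i(t),\ee_j(t))$ is constant, equal to $g_0(\ee_i,\ee_j)=\delta_{ij}$. Finally, a set of vectors that is mutually $g_t$-orthonormal is automatically linearly independent, so $\{E_a(t),\ee_i(t)\}$ is a local $g_t$-orthonormal frame.

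I do not expect a real obstacle here: the argument is a short ODE-plus-bilinearity computation. The only points requiring care are the order of the steps — the second relation must be proved before the third, because the cancellation in the third computation relies on $\ee_j(t)$ being $g_t$-horizontal — and the sign/coefficient bookkeeping that makes the $\tfrac12 P_{\mH}^t$ contribution together with the full $B_t$ term (and the symmetric contribution from $\dt\ee_j(t)$) cancel exactly.
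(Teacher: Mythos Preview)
Your proof is correct. The paper does not actually supply a proof of this lemma: it is stated with a citation to \cite{rz-2} and no \texttt{proof} environment follows. Your direct verification---differentiating each of the three orthonormality relations, using $B_t(U,V)=0$ for vertical $U,V$, and noting that the second relation must be established first so that $\ee_j(t)\in\mH(t)$ is available in the third computation---is precisely the natural argument and matches what one would expect the cited proof to contain.
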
 

\begin{lemma} \label{lemmanablatwidetilde}
Let $\pi : (M,g_0) \rightarrow (N, g_N)$ be a Riemannian submersion and let 
$\{ g_t, t \in (-\epsilon, \epsilon) \} \subset \Riem(M, \mV , g_0)$ be a variation. Then for all $X,Y,Z \in \mathfrak{X}_{\mV}$ we have
\begin{equation} \label{dtgtXYZwidetildemD}
\dt g_t( \nabla^t_X Y , Z ) = 0
\end{equation}
and hence
\begin{equation} \label{P1P0eq}
P_{\mV}^t  ( \nabla^t_{X} {Y} + \nabla^t_{Y} {X}  )  = P_{\mV}^0 ( \nabla^0_{X} {Y} + \nabla^0_{Y} {X}  ) .
\end{equation}
\end{lemma}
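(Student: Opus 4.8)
The plan is to apply the Koszul formula to the Levi-Civita connection $\nabla^t$ and exploit two facts: the vertical distribution $\mV=\ker\pi_*$ is integrable, being tangent to the fibers of $\pi$, and $g_t$ agrees with $g_0$ on $\mV$ because $\{g_t\}\subset\Riem(M,\mV,g_0)$. Concretely, for $X,Y,Z\in\mathfrak{X}_{\mV}$ the Koszul formula gives
\begin{align*}
2\,g_t(\nabla^t_X Y,Z) &= X\,g_t(Y,Z)+Y\,g_t(X,Z)-Z\,g_t(X,Y)\\
&\quad+g_t([X,Y],Z)-g_t([Y,Z],X)-g_t([X,Z],Y).
\end{align*}
Since $[X,Y],[Y,Z],[X,Z]\in\mathfrak{X}_{\mV}$, every term on the right is $g_t$ evaluated on a pair of vertical vector fields, hence equals the same expression with $g_0$ in place of $g_t$; so $2\,g_t(\nabla^t_X Y,Z)=2\,g_0(\nabla^0_X Y,Z)$, which is independent of $t$. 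This yields \eqref{dtgtXYZwidetildemD}.

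For \eqref{P1P0eq}, I would first note that for any $W\in\mathfrak{X}_M$ and $Z\in\mathfrak{X}_{\mV}$ one has $g_t(P_{\mV}^t W,Z)=g_t(W,Z)$, because $P_{\mH}^t W\in\mathfrak{X}_{\mH(t)}$ is $g_t$-orthogonal to $Z$. Applying this with $W=\nabla^t_X Y+\nabla^t_Y X$ and using \eqref{dtgtXYZwidetildemD} together with $g_t|_{\mV}=g_0|_{\mV}$, one gets, for every $Z\in\mathfrak{X}_{\mV}$,
\[
g_t\big(P_{\mV}^t(\nabla^t_X Y+\nabla^t_Y X),Z\big)=g_0\big(\nabla^0_X Y+\nabla^0_Y X,Z\big)=g_t\big(P_{\mV}^0(\nabla^0_X Y+\nabla^0_Y X),Z\big).
\]
Both vectors $P_{\mV}^t(\nabla^t_X Y+\nabla^t_Y X)$ and $P_{\mV}^0(\nabla^0_X Y+\nabla^0_Y X)$ are vertical, and $g_t$ is non-degenerate on $\mV$, so they coincide, which is \eqref{P1P0eq}.

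There is no serious obstacle here: the computation is routine once one observes that the right-hand side of the Koszul formula for vertical arguments never involves horizontal data. The only point requiring a word of justification is the integrability of $\mV$ — so that the brackets of vertical fields remain vertical — which holds because $\mV$ is the tangent distribution to the fibers of $\pi$. Note also that this lemma does not assume the variation preserves $\pi$, and accordingly the argument uses neither Theorem \ref{thBXYriem} nor the specific form of $B_t^\sharp$, only $\{g_t\}\subset\Riem(M,\mV,g_0)$.
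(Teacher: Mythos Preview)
Your proof is correct and follows essentially the same approach as the paper: both use the Koszul formula together with integrability of $\mV$ and $g_t|_{\mV}=g_0|_{\mV}$ to get $g_t(\nabla^t_X Y,Z)=g_0(\nabla^0_X Y,Z)$, and then derive \eqref{P1P0eq} from the fact that both projections are vertical and pair the same way against all vertical $Z$. The paper phrases the second step as $\dt P_{\mV}^t(\nabla^t_X Y+\nabla^t_Y X)=0$, but this is the same argument as yours.
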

\begin{proof}
Equation \eqref{dtgtXYZwidetildemD} follows from the Koszul formula \cite{KobayashiNomizu}
\begin{eqnarray*} 
&& 2 g_t (\nabla^t_X Y , Z) = X (g_t(Y,Z)) + Y (g_t(X,Z)) - Z (g_t(X,Y)) \nonumber \\
&& + g_t ( [X,Y] , Z ) + g_t([Z,X],Y) + g_t([Z,Y],X) 
= 2g_0 (\nabla^0_X Y, Z) .
\end{eqnarray*}
From \eqref{dtgtXYZwidetildemD} 
we obtain 
$\dt  P_{\mV}^t  ( \nabla^t_{X} {Y} + \nabla^t_{Y} {X}  ) = 0$, 
from which \eqref{P1P0eq} follows.
\end{proof}

\begin{lemma} \label{lemmadtnablaXYZRiem}
Let $\pi : (M, g_0) \rightarrow (N, g_N)$ be a Riemannian submersion and let variation $\{ g_t, t \in (-\epsilon, \epsilon) \} \subset \Riem(M, \mV, g_0)$ preserve the Riemannian submersion $\pi$.
Then for all $X,Y \in \mathfrak{X}_{\mV }$ 
and $Z \in \mathfrak{X}_{\mH(t)}$ we have
\begin{eqnarray} \label{dtnablaXYZ}
2 g_t (\dt \nabla^t_X Y , Z) 
&=& g_t([X, B_t^\sharp Y] , Z) 
 + g_t( [Y, B_t^\sharp X ], Z) \nonumber\\
&& - B_t( Z, P_{\mV}^0 ( \nabla^0_{X} {Y} + \nabla^0_{Y} {X}  ) ),
\end{eqnarray}
which is equivalent to
\begin{eqnarray} \label{dtnablaXYonly}
2 P_{\mH}^t \dt \nabla^t_{X} Y = P_{\mH}^t  \left( [X, B_t^\sharp Y] + [Y, B_t^\sharp X] - B_t^\sharp P_{\mV}^0 (\nabla^0_{X}Y +  \nabla^0_{Y}X ) \right).
\end{eqnarray}
\end{lemma}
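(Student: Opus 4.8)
The plan is to differentiate the Koszul formula for $2g_t(\nabla^t_X Y, Z)$ with respect to $t$, using $\dt g_t = B_t$, and then simplify the resulting terms using the structural facts available: that $B_t$ vanishes on pairs of vertical fields (equation \eqref{BVVzero}), that $B_t$ is controlled by \eqref{BXYfields}, and that $P_{\mV}^t(\nabla^t_X Y + \nabla^t_Y X)$ is $t$-independent by Lemma \ref{lemmanablatwidetilde}. Since we pair against $Z \in \mathfrak{X}_{\mH(t)}$, the left-hand side directly produces $2g_t(\dt\nabla^t_X Y, Z)$ (the term $2B_t(\nabla^t_X Y, Z)$ that also appears must be handled and, as I explain below, contributes the last term of \eqref{dtnablaXYZ}).

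First I would write out, for fixed vector fields $X, Y, Z$, the identity
\[
2 g_t(\nabla^t_X Y, Z) = X(g_t(Y,Z)) + Y(g_t(X,Z)) - Z(g_t(X,Y)) + g_t([X,Y],Z) + g_t([Z,X],Y) + g_t([Z,Y],X),
\]
and apply $\dt$. On the left we get $2B_t(\nabla^t_X Y, Z) + 2g_t(\dt\nabla^t_X Y, Z)$. On the right, since $X, Y$ are vertical and $[X,Y]$ is vertical, the term $\dt\big(Z(g_t(X,Y))\big) = Z(B_t(X,Y))$ vanishes by \eqref{BVVzero}, as does $\dt\big(g_t([X,Y],Z)\big)$? — no, that last one need not vanish since $Z$ is horizontal, so it gives $B_t([X,Y],Z)$; I would keep careful track here. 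The surviving right-hand terms are $X(B_t(Y,Z)) + Y(B_t(X,Z)) + B_t([X,Y],Z) + B_t([Z,X],Y) + B_t([Z,Y],X) - Z(B_t(X,Y))$, and using $B_t(X,Y)=0$ on verticals the last is zero while $B_t([Z,X],Y)$ and $B_t([Z,Y],X)$ also vanish because $Y,X$ are vertical and $B_t$ only pairs a horizontal with a vertical (so I need to recognize $B_t([Z,X],Y) = g_t(B_t^\sharp Y, [Z,X])$ and observe $B_t^\sharp Y$ is $g_t$-horizontal, hence equals $g_t(B_t^\sharp Y, P_{\mH}^t[Z,X])$, which need not vanish — this requires more care and is exactly where I expect the bookkeeping to be delicate). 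The cleanest route is: rewrite each $B_t(\cdot,\cdot) = g_t(B_t^\sharp\cdot\,,\cdot)$, move everything into a single Koszul-type expression for $g_0([\cdot,\cdot],\cdot)$ type brackets, and recognize $X(g_t(B_t^\sharp Y, Z)) - g_t(B_t^\sharp Y, \nabla^t_X Z) - \ldots$ as assembling into $g_t([X, B_t^\sharp Y], Z)$ plus correction terms, then symmetrize in $X \leftrightarrow Y$.

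The term $2B_t(\nabla^t_X Y, Z)$ on the left is handled by writing $\nabla^t_X Y = P_{\mV}^t\nabla^t_X Y + P_{\mH}^t\nabla^t_X Y$; the horizontal part pairs trivially under $B_t$ with the horizontal $Z$ only after noting $B_t(P_{\mH}^t W, Z)$ with both horizontal vanishes by \eqref{BXYfields}, so only $B_t(P_{\mV}^t\nabla^t_X Y, Z)$ survives, and symmetrizing (replacing $\nabla^t_X Y$ by $\tfrac12(\nabla^t_X Y + \nabla^t_Y X)$, legitimate since the symmetric-in-$X,Y$ structure is what enters — I would double-check the antisymmetric part cancels) and invoking \eqref{P1P0eq} converts it to $B_t(Z, P_{\mV}^0(\nabla^0_X Y + \nabla^0_Y X))$, giving precisely the last term of \eqref{dtnablaXYZ}. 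Finally, to pass from \eqref{dtnablaXYZ} to \eqref{dtnablaXYonly}: equation \eqref{dtnablaXYZ} determines $g_t(\dt\nabla^t_X Y, Z)$ for all $Z \in \mathfrak{X}_{\mH(t)}$, hence determines $P_{\mH}^t\dt\nabla^t_X Y$; I would simply observe that the right-hand side of \eqref{dtnablaXYZ}, read as $g_t$ paired against $Z$, equals $g_t$ of the vector $P_{\mH}^t\big([X,B_t^\sharp Y] + [Y,B_t^\sharp X] - B_t^\sharp P_{\mV}^0(\nabla^0_X Y + \nabla^0_Y X)\big)$ against $Z$, using $B_t(Z,V) = g_t(B_t^\sharp V, Z)$ for the last term and the fact that $g_t(P_{\mH}^t(\cdot), Z) = g_t(\cdot, Z)$ for horizontal $Z$. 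The main obstacle is the first paragraph's bracket bookkeeping — correctly identifying which $B_t$-terms vanish and assembling the non-vanishing ones into Lie brackets $[X, B_t^\sharp Y]$ without dropping connection-correction terms; once that is done, \eqref{dtnablaXYonly} is immediate.
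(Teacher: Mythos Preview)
Your overall strategy (differentiate the Koszul formula, use $B_t(U,V)=0$ on vertical pairs, then split $\nabla^t_X Y$ into vertical and horizontal parts and invoke \eqref{P1P0eq} for the last term) is the same as the paper's, and the passage from \eqref{dtnablaXYZ} to \eqref{dtnablaXYonly} is handled correctly. But there is a real gap at exactly the point you flag as ``delicate'': you never explain how the covariant-derivative terms become the Lie-bracket terms $g_t([X,B_t^\sharp Y],Z)+g_t([Y,B_t^\sharp X],Z)$.

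Concretely, $X(g_t(B_t^\sharp Y,Z)) - g_t(B_t^\sharp Y,\nabla^t_X Z) = g_t(\nabla^t_X B_t^\sharp Y,Z)$, not $g_t([X,B_t^\sharp Y],Z)$; the difference is $g_t(\nabla^t_{B_t^\sharp Y}X,Z)$. To kill such ``correction terms'' the paper uses, essentially, that the variation preserves the Riemannian submersion: this forces $\tilde h_t=0$ and $\tilde A^t=0$ for all $t$, so that (for horizontal $Z$ and $B_t^\sharp X,B_t^\sharp Y$) one has $g_t(\nabla^t_{B_t^\sharp Y}X,Z)=-g_t(\tilde T^{\sharp t}_X B_t^\sharp Y,Z)$ and $g_t(B_t^\sharp Y,\nabla^t_Z X)=-g_t(\tilde T^{\sharp t}_X Z,B_t^\sharp Y)$, and these $\tilde T$-terms cancel pairwise by antisymmetry. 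Without invoking $\tilde h_t=0$ and $\tilde A^t=0$ you cannot close this step; your proposal never mentions them. A secondary issue: your justification for replacing $\nabla^t_X Y$ by $\tfrac12(\nabla^t_X Y+\nabla^t_Y X)$ inside $B_t(\,\cdot\,,Z)$ is not that ``the symmetric structure is what enters'' --- the antisymmetric part contributes $B_t([X,Y],Z)\neq 0$ --- but rather that this term cancels against the $B_t([X,Y],Z)$ appearing on the right-hand side of the differentiated Koszul formula.
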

\begin{proof}
We have, using \eqref{BVVzero}, 
\begin{eqnarray*}
&& 2 g_t (\dt \nabla^t_X Y , Z) = (\nabla^t_X B_t)(Y,Z) + (\nabla^t_Y B_t)(X,Z) - (\nabla^t_Z B_t)(X,Y) \\
&& =  X(B_t(Y,Z) ) - B_t( \nabla^t_X Y , Z  )- B_t( \nabla^t_X Z , Y ) 
+ Y(B_t(X,Z) ) - B_t( \nabla^t_Y X , Z  ) \\
&&\quad - B_t( \nabla^t_Y Z , X ) - Z(B_t(X,Y) ) + B_t( \nabla^t_Z X , Y ) + B_t( \nabla^t_Z Y , X ) \\ 
&&= g_t( \nabla^t_X B_t^\sharp Y, Z) + g_t (\nabla^t_X Z, B_t^\sharp Y)
- g_t( B_t^\sharp \nabla^t_X Z , Y ) + g_t( \nabla^t_Y B_t^\sharp X, Z)
\\&&\quad + g_t (\nabla^t_Y Z, B_t^\sharp X)
- g_t( B_t^\sharp \nabla^t_Y Z , X ) - g_t( \nabla^t_Z B_t^\sharp X, Y) - g_t (\nabla^t_Z Y, B_t^\sharp X) \\
&&\quad + g_t( B_t^\sharp \nabla^t_Z X , Y ) + g_t( B_t^\sharp \nabla^t_Z Y , X ) - g_t (B_t^\sharp Z , \nabla^t_X Y + \nabla^t_Y X ) \\
&& = g_t( \nabla^t_X B_t^\sharp Y, Z) 
 + g_t( \nabla^t_Y B_t^\sharp X, Z)
 - g_t( \nabla^t_Z B_t^\sharp X, Y) \\
&&\quad + g_t( B_t^\sharp \nabla^t_Z X , Y ) - g_t (B_t^\sharp Z , \nabla^t_X Y + \nabla^t_Y X ) .
\end{eqnarray*}
Using $Z \in \mathfrak{X}_{\mH(t)}$, $B_t^\sharp X ,  B_t^\sharp Y  \in \mathfrak{X}_{\mH(t)}$, 
${\tilde h}_t=0$ and $\tA=0$ for all $t$ (which follows from the fact that $\pi$ is a Riemannian submersion for all $g_t$), we obtain
\begin{eqnarray} \label{dtnablaXYZproof}
2 g_t (\dt \nabla^t_X Y , Z) 
&=& g_t( \nabla^t_X B_t^\sharp Y, Z) 
 + g_t( \nabla^t_Y B_t^\sharp X, Z)
 - g_t( \nabla^t_Z B_t^\sharp X, Y) \nonumber\\
&& + g_t( \nabla^t_Z X ,  B_t^\sharp Y ) - g_t (B_t^\sharp Z , \nabla^t_X Y + \nabla^t_Y X )\nonumber \\
&=& g_t( \nabla^t_X B_t^\sharp Y, Z) 
 + g_t( \nabla^t_Y B_t^\sharp X, Z)
 - g_t( {\tilde h}_t(Z , B_t^\sharp X) + {\tilde T}_t (Z , B_t^\sharp X) , Y) \nonumber\\
&& - g_t( \tA_X Z + \tT_X Z ,  B_t^\sharp Y ) - g_t (B_t^\sharp Z , \nabla^t_X Y + \nabla^t_Y X )\nonumber \\
&=& g_t( \nabla^t_X B_t^\sharp Y, Z) 
 + g_t( \nabla^t_Y B_t^\sharp X, Z)
 - g_t( \tT_Y Z , B_t^\sharp X ) \nonumber\\
&& - g_t( \tT_X Z ,  B_t^\sharp Y ) - g_t (B_t^\sharp Z , \nabla^t_X Y + \nabla^t_Y X ) \nonumber\\
&=& g_t( \nabla^t_{B_t^\sharp Y} X + [X, B_t^\sharp Y] , Z) 
 + g_t( \nabla^t_{B_t^\sharp X} Y + [Y, B_t^\sharp X ], Z) \nonumber \\&&
 - g_t( \tT_Y Z , B_t^\sharp X ) - g_t( \tT_X Z ,  B_t^\sharp Y ) - g_t (B_t^\sharp Z , \nabla^t_X Y + \nabla^t_Y X )\nonumber \\
&=& g_t( -\tT_X {B_t^\sharp Y} + [X, B_t^\sharp Y] , Z) 
 + g_t( -\tT_Y {B_t^\sharp X} + [Y, B_t^\sharp X ], Z) \nonumber \\&&
 - g_t( \tT_Y Z , B_t^\sharp X ) - g_t( \tT_X Z ,  B_t^\sharp Y ) - g_t (B_t^\sharp Z , \nabla^t_X Y + \nabla^t_Y X ) \nonumber\\
&=& g_t([X, B_t^\sharp Y] , Z) 
 + g_t( [Y, B_t^\sharp X ], Z)
 - g_t (B_t^\sharp Z , \nabla^t_X Y + \nabla^t_Y X ) .
\end{eqnarray}
For the last term 
in \eqref{dtnablaXYZproof}, we use \eqref{BXY}, assumption that $Z \in \mathfrak{X}_{\mH(t)}$ and \eqref{P1P0eq} to obtain
\begin{eqnarray} \label{dtnablaXYZproof2}
B_t(Z , \nabla^t_X Y + \nabla^t_Y X ) &=& \sum\nolimits_{a=1}^n B_t(Z , E_a) g_t(E_a , P_{\mV}^t  ( \nabla^t_X Y + \nabla^t_Y X  ) ) \nonumber \\ 
&& + \sum\nolimits_{a=1}^n g_t(Z , E_a) B_t(E_a , P_{\mV}^t  ( \nabla^t_X Y + \nabla^t_Y X  ) ) \nonumber\\
&=& \sum\nolimits_{a=1}^n B_t(Z , E_a) g_t(E_a , P_{\mV}^t  ( \nabla^t_X Y + \nabla^t_Y X  ) ) \nonumber\\
&=& B_t(Z ,  P_{\mV}^t  ( \nabla^t_X Y + \nabla^t_Y X  ) ) \nonumber\\
&=& B_t(Z ,  P_{\mV}^0 ( \nabla^0_X Y + \nabla^0_Y X  ) ) .
\end{eqnarray}
From \eqref{dtnablaXYZproof} and \eqref{dtnablaXYZproof2} we obtain \eqref{dtnablaXYZ}.
\end{proof}

\begin{lemma}
Let $\pi : (M, g_0) \rightarrow (N, g_N)$ be a Riemannian submersion and let variation $\{ g_t, t \in (-\epsilon, \epsilon) \} \subset \Riem(M, \mV, g_0)$ preserve the Riemannian submersion $\pi$. Then for every local $g_0$-orthonormal frame 
$\{E_1 , \ldots , E_n\} \subset \mathfrak{X}_{\mV}$ and for all $Z \in \mathfrak{X}_{\mH(t)}$ we have
\begin{equation} \label{gdtHZ}
g_t(\dt H_t, Z) = \sum\nolimits_{a=1}^n g_t([E_a, B_t^\sharp E_a] - B_t^\sharp ( P_{\mV}^0 (\nabla^0_{E_a} E_a) ), Z) ,
\end{equation}
which is equivalent to
\begin{equation} \label{P2dtH}
P_{\mH}^t  \dt H_t = P_{\mH}^t  \sum\nolimits_{a=1}^n \left( [E_a, B_t^\sharp E_a] - B_t^\sharp P_{\mV}^0 \nabla^0_{E_a} E_a \right) .
\end{equation}
\end{lemma}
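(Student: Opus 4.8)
The plan is to obtain \eqref{gdtHZ} by summing the formula \eqref{dtnablaXYZ} from Lemma \ref{lemmadtnablaXYZRiem} over a local $g_0$-orthonormal vertical frame, with $X=Y=E_a$, and then controlling the error terms that arise from differentiating such a frame with respect to $t$. Recall $H_t = \sum_{a=1}^n h_t(E_a,E_a) = \sum_{a=1}^n P_{\mH}^t \nabla^t_{E_a} E_a$, but here the $E_a$ are only $g_0$-orthonormal, so for $t\neq 0$ we must either work with the $g_t$-orthonormal frame $\{E_a(t)\}$ from Lemma \ref{lemmaframe} (which, since $E_a(t)=E_a$ for vertical vectors, in fact coincides with $\{E_a\}$) or more simply note that whenever $\{E_a\}$ is $g_0$-orthonormal and vertical it stays $g_t$-orthonormal for all $t$, because $\{g_t\}\subset\Riem(M,\mV,g_0)$. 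This is the key simplification: the vertical part of the metric never changes, so a fixed $g_0$-orthonormal vertical frame computes $H_t$ for every $t$, and $\dt H_t = \sum_a \dt\big(P_{\mH}^t\nabla^t_{E_a}E_a\big)$, so that $P_{\mH}^t\dt H_t = \sum_a P_{\mH}^t\dt\nabla^t_{E_a}E_a$ up to a term involving $\dt P_{\mH}^t$ applied to the vertical vector $P_{\mV}^t\nabla^t_{E_a}E_a$.

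Concretely, first I would write $H_t = \sum_a\big(\nabla^t_{E_a}E_a - P_{\mV}^t\nabla^t_{E_a}E_a\big)$ and differentiate: $\dt H_t = \sum_a\big(\dt\nabla^t_{E_a}E_a - \dt(P_{\mV}^t\nabla^t_{E_a}E_a)\big)$. By Lemma \ref{lemmanablatwidetilde}, $P_{\mV}^t\nabla^t_{E_a}E_a = P_{\mV}^0\nabla^0_{E_a}E_a$ is $t$-independent, so the second group of terms drops out entirely, giving $\dt H_t = \sum_a\dt\nabla^t_{E_a}E_a$. Then I would pair with $Z\in\mathfrak{X}_{\mH(t)}$: $g_t(\dt H_t,Z) = \sum_a g_t(\dt\nabla^t_{E_a}E_a,Z)$, and apply \eqref{dtnablaXYZ} with $X=Y=E_a$, which reads $2g_t(\dt\nabla^t_{E_a}E_a,Z) = 2g_t([E_a,B_t^\sharp E_a],Z) - B_t(Z,2P_{\mV}^0\nabla^0_{E_a}E_a)$. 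The factor of $2$ cancels on both sides. It remains to rewrite $B_t(Z,P_{\mV}^0\nabla^0_{E_a}E_a)$ as $g_t(B_t^\sharp(P_{\mV}^0\nabla^0_{E_a}E_a),Z)$, which is just the definition of $B_t^\sharp$. Summing over $a$ yields \eqref{gdtHZ}.

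For the equivalence of \eqref{gdtHZ} with \eqref{P2dtH}: equation \eqref{gdtHZ} asserts that $g_t(\dt H_t,Z)$ equals $g_t$ of the stated vector with $Z$, for all $Z\in\mathfrak{X}_{\mH(t)}$. Since both $\dt H_t$ and $\sum_a\big([E_a,B_t^\sharp E_a]-B_t^\sharp P_{\mV}^0\nabla^0_{E_a}E_a\big)$ are a priori arbitrary vectors, I would apply $P_{\mH}^t$ to both sides of the difference and use nondegeneracy of $g_t$ on $\mH(t)$ to conclude $P_{\mH}^t\dt H_t = P_{\mH}^t\sum_a\big([E_a,B_t^\sharp E_a]-B_t^\sharp P_{\mV}^0\nabla^0_{E_a}E_a\big)$, which is \eqref{P2dtH}. (One should note that $\dt H_t$ itself need not be $g_t$-horizontal, but it must be shown, or observed, that testing against all horizontal $Z$ recovers exactly the horizontal part — this follows because $H_t$ is horizontal for every $t$, hence $\dt H_t = \dt(P_{\mH}^t H_t)$ differs from $P_{\mH}^t\dt H_t$ only by $(\dt P_{\mH}^t)H_t$, whose $g_t$-inner product with horizontal $Z$ one must check; in fact it is cleaner to just take $P_{\mH}^t$ of the identity of vectors that \eqref{dtnablaXYZ} already encodes, as in \eqref{dtnablaXYonly}.)

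The main obstacle I anticipate is bookkeeping around which frame is used and whether $\dt H_t$ is genuinely horizontal: one must be careful that $H_t$ depends on $t$ through both $\nabla^t$ and $P_{\mH}^t$, and that the local $g_0$-orthonormal vertical frame legitimately computes the $g_t$-mean curvature. Both concerns dissolve once one invokes $\{g_t\}\subset\Riem(M,\mV,g_0)$ (vertical metric fixed, so vertical $g_0$-orthonormal frames are $g_t$-orthonormal) together with Lemma \ref{lemmanablatwidetilde} (the vertical part of $\nabla^t_{E_a}E_a$ is $t$-independent). After that, the computation is a direct specialization of Lemma \ref{lemmadtnablaXYZRiem}; no genuinely new estimate is required.
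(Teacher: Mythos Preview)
Your proposal is correct and follows exactly the paper's approach: the paper's proof consists of the single sentence ``We obtain \eqref{gdtHZ} by setting $X=Y=E_a$ in \eqref{dtnablaXYZ} and summing over $a \in \{1, \ldots , n\}$,'' and your write-up supplies precisely the details left implicit there (that a vertical $g_0$-orthonormal frame remains $g_t$-orthonormal, and that $P_{\mV}^t\nabla^t_{E_a}E_a$ is $t$-independent by Lemma~\ref{lemmanablatwidetilde}, so $\dt H_t=\sum_a\dt\nabla^t_{E_a}E_a$). Your discussion of the equivalence with \eqref{P2dtH} is also fine and more explicit than the paper, which simply asserts it.
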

\begin{proof}
We obtain \eqref{gdtHZ} by setting $X=Y=E_a$ in \eqref{dtnablaXYZ} 
and summing over $a \in \{1, \ldots , n\}$.
\end{proof} 

\subsection{Preserving extrinsic geometry of fibers} \label{secpresgeom}

We will write that a vector field is non-trivial if it is not everywhere zero, and that a variation $\{ g_t, t \in (-\epsilon, \epsilon) \} \subset \Riem(M, \mV, g_0)$ is non-trivial if $g_t \neq g_0$ for some $t \in (-\epsilon, \epsilon)$.

\begin{theorem} \label{thRSGFKillingglobal}
Let $\pi : (M, g_0) \rightarrow (N, g_N)$ be a Riemannian submersion with totally geodesic fibers. If there exists a non-trivial 
variation $\{ g_t, t \in (-\epsilon, \epsilon) \} \subset \Riem(M, \mV, g_0)$ 
such that $\pi : (M, g_t) \rightarrow (N, g_N)$ is a Riemannian submersion with totally geodesic fibers for all $t \in (-\epsilon, \epsilon)$, 
then 
there exists a non-trivial vertical vector field $\xi$, whose restriction to $(\mF_{x} , g_0 \vert_{\mF_{x}})$ is a Killing field for all $x \in N$.
On the other hand, if on $(M, g_0)$ 
exists a non-trivial bounded vertical 
vector field $\xi$, whose restriction to every fiber of $\pi$ is 
Killing, 
then there exists a non-trivial 
variation $g_t$ with the above properties.
\end{theorem}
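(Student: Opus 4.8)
The plan is to split the statement into its two implications and to use Lemma~\ref{lemmadtnablaXYZRiem} as the bridge to the Killing condition in both directions. Recall that, for a variation preserving the Riemannian submersion, the fibers of $\pi:(M,g_t)\to(N,g_N)$ are totally geodesic precisely when $P_{\mH}^t(\nabla^t_X Y+\nabla^t_Y X)=0$ for all $X,Y\in\mathfrak{X}_{\mV}$. Differentiating this in $t$ and using that the fibers are totally geodesic at $g_0$ (so $P_{\mV}^0(\nabla^0_XY+\nabla^0_YX)=\nabla^0_XY+\nabla^0_YX$ is already vertical and the last term in \eqref{dtnablaXYonly} contributes only its horizontal part, which vanishes), equation \eqref{dtnablaXYonly} reduces at $t=0$ to
\begin{equation} \label{pln:tgcond}
P_{\mH}^0\big([X,B_0^\sharp Y]+[Y,B_0^\sharp X]\big)=0 \quad\text{for all }X,Y\in\mathfrak{X}_{\mV}.
\end{equation}
Since $B_0^\sharp X$ is $g_0$-horizontal for vertical $X$, and for $X,Y$ vertical and $W:=B_0^\sharp X$ horizontal one has $g_0([Y,W],Z)=g_0([W,Y],Z)$ related to $\tilde h_0,\tilde T_0$ only in horizontal-horizontal arguments, the condition \eqref{pln:tgcond} should be rewritten, using the totally geodesic fibers and $\tilde h_0=0$, as a statement purely about the vertical part: namely $g_0(\nabla^0_X(B_0^\sharp Y)+\nabla^0_Y(B_0^\sharp X),Z)=0$ for vertical $Z$, equivalently
\begin{equation} \label{pln:killrel}
(\mathcal{L}_{B_0^\sharp}\,?) \quad\Longleftrightarrow\quad g_0(\nabla^0_X \eta,Y)+g_0(\nabla^0_Y \eta,X)=0,
\end{equation}
where $\eta$ is obtained from $B_0^\sharp$ paired against the $W_i$'s. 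Concretely, by Remark~\ref{remVit} (applied locally over a trivializing $U\subset N$ where independent $W_i$ exist) and Theorem~\ref{corbsharpglobal}, we have $B_0^\sharp X=\sum_i g_0(V_i,X)\,\pi^*W_i$ for vertical fields $V_i$; since $B_0^\sharp$ maps into $\mathfrak{X}_{\mV}^\perp$ but the totally-geodesic-preserving condition \eqref{pln:tgcond} constrains its vertical-derivative behaviour, the upshot is that each $V_i$ restricted to each fiber is Killing. Because the variation is non-trivial, $B_0^\sharp\neq0$ somewhere (or, if $B_0^\sharp\equiv0$, pass to the first $t_0$ with $B_{t_0}^\sharp\neq0$ and rerun the argument at $g_{t_0}$, whose fibers are still totally geodesic), so some $V_i$ is non-trivial; set $\xi:=V_i$. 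This gives the first implication.

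For the converse, I would start from a non-trivial bounded vertical $\xi$ that is Killing on every fiber, choose locally a $g_0$-orthonormal horizontal frame $\pi^*W_{n+1},\dots,\pi^*W_{n+p}$ (shrinking to a trivializing open $U\subset N$), and apply Theorem~\ref{corbsharpglobal} with $V_{n+1}:=\xi$ and $V_{n+2}=\dots=V_{n+p}:=0$, and $W_i$ the chosen bounded independent fields on $N$ — this produces a variation $g_t\in\Riem(M,\mV,g_0)$ preserving $\pi$ with $B_t^\sharp X=g_0(\xi,X)\,P_{\mH}^t\pi^*W_{n+1}$. It is non-trivial because $B_0^\sharp\neq0$ (as $\xi\neq0$ and $\pi^*W_{n+1}\neq0$), hence $\dt g_t|_{t=0}\neq0$. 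It remains to check that the fibers stay totally geodesic for all $t$: I would show $P_{\mH}^t(\nabla^t_XY+\nabla^t_YX)=0$ is preserved in $t$ by checking its $t$-derivative vanishes via \eqref{dtnablaXYonly}, for which one needs $P_{\mH}^t([X,B_t^\sharp Y]+[Y,B_t^\sharp X]-B_t^\sharp P_{\mV}^0(\nabla^0_XY+\nabla^0_YX))=0$. Expanding $[X,B_t^\sharp Y]$ with $B_t^\sharp Y=g_0(\xi,Y)P_{\mH}^t\pi^*W$, the scalar-derivative terms $X(g_0(\xi,Y))P_{\mH}^t\pi^*W$ are horizontal and must cancel against the $-B_t^\sharp P_{\mV}^0\nabla^0_XY$ term plus the symmetric ones — this cancellation is exactly the fiberwise Killing identity $g_0(\nabla^0_X\xi,Y)+g_0(\nabla^0_Y\xi,X)=0$ for $X,Y$ vertical, together with totally-geodesic-ness of the $g_0$-fibers to handle $P_{\mV}^0\nabla^0_XY$. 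So if the horizontal part of $[X,P_{\mH}^t\pi^*W]$ for vertical $X$ can be controlled (it is governed by $\tilde h_t=0$, $\tilde T_t$, and the O'Neill tensor $A$ of $\mH(t)$, none of which feed back into the vertical–vertical geodesic equation), the derivative of the totally-geodesic condition vanishes identically in $t$, and since it holds at $t=0$ it holds throughout.

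The main obstacle I anticipate is the converse direction's bookkeeping: verifying that the totally-geodesic property is genuinely propagated for all $t$, not merely to first order at $t=0$. One must either set up a Grönwall-type / uniqueness-of-ODE argument — observing that $t\mapsto P_{\mH}^t(\nabla^t_XY+\nabla^t_YX)$ (for a fixed local vertical frame, extended $t$-independently) satisfies a linear homogeneous ODE in $t$ whose coefficients come from \eqref{dtnablaXYonly}, so that vanishing initial data forces vanishing for all $t$ — or directly re-derive at each $t$ that the Killing condition for $\xi$ on the fibers (which is a $g_0$-statement, hence $t$-independent since $g_t|_{\mF_x}=g_0|_{\mF_x}$) combined with $\tilde h_t=0$ closes the computation. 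A secondary subtlety is that $B_t^\sharp$ involves $P_{\mH}^t$, which is $t$-dependent, so the horizontal projections in \eqref{dtnablaXYonly} must be handled carefully using $P_{\mH}^t\ee_i=\ee_i-\sum_a g_{ia}(t)E_a$ from Lemma~\ref{lemlambdalocal}; but since all the constraints we need to verify are on the \emph{vertical} part of $\nabla^t_XY+\nabla^t_YX$ (equivalently its $\mH(t)$-component vanishing), and by Lemma~\ref{lemmanablatwidetilde} the vertical part is $t$-independent, the $t$-dependence of $P_{\mH}^t$ should not actually obstruct the argument — this is the point to be careful about but it ought to go through cleanly.
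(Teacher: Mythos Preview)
Your overall strategy matches the paper's: represent $B_t^\sharp$ locally via Remark~\ref{remVit} as $B_t^\sharp X=\sum_i g_0(V_i(t),X)\,P_{\mH}^t\pi^*W_i$, plug into \eqref{dtnablaXYonly}, read off the fiberwise Killing condition on the $V_i$, and for the converse feed a fiberwise-Killing $\xi$ into Theorem~\ref{corbsharpglobal}. However, the forward computation contains a genuine error.

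You claim that the term $-B_t^\sharp P_{\mV}^0(\nabla^0_XY+\nabla^0_YX)$ in \eqref{dtnablaXYonly} ``contributes only its horizontal part, which vanishes''. This is false: for any vertical $V$ one has $B_t^\sharp V\in\mathfrak{X}_{\mH(t)}$ by \eqref{BVVzero}, so $P_{\mH}^t B_t^\sharp V = B_t^\sharp V$, and the term survives in full. It is not a nuisance to be discarded; it is essential to obtaining the Killing condition. Dropping it leaves you with $X(g_0(V_i,Y))+Y(g_0(V_i,X))$, which is \emph{not} $(\mathcal{L}_{V_i}g_0)(X,Y)$. The paper's computation (equation \eqref{dtPhdtnablaXYforVi}) keeps all three terms and uses the key identity
\[
P_{\mH}^t\,[X,\,P_{\mH}^t\pi^*W_i]=0 \qquad\text{for all } X\in\mathfrak{X}_{\mV},
\]
which holds because $\pi^*W_i$ is projectable and $\mV$ is integrable (equation \eqref{lieXP2piW}). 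With this, the Leibniz rule gives $P_{\mH}^t[X,B_t^\sharp Y]=\sum_i X(g_0(V_i,Y))\,P_{\mH}^t\pi^*W_i$, and the three terms of \eqref{dtnablaXYonly} combine to $\sum_i\big(g_0(\nabla^0_XV_i,Y)+g_0(\nabla^0_YV_i,X)\big)P_{\mH}^t\pi^*W_i$, valid for \emph{every} $t$. You effectively carry out this cancellation correctly in your converse paragraph (``the scalar-derivative terms \ldots must cancel against the $-B_t^\sharp P_{\mV}^0\nabla^0_XY$ term''), so the fix is simply to run that same computation in the forward direction rather than discarding the third term.

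Two smaller points. First, the equivalence ``$h_t\equiv0$ for all $t$ $\Leftrightarrow$ $P_{\mH}^t\dt\nabla^t_XY=0$ for all $t$'' is not automatic, since $\dt$ and $P_{\mH}^t$ do not commute; the paper verifies it by computing $\dt P_{\mH}^t\nabla^t_XY$ and $\dt g_t(h_t(X,Y),h_t(X,Y))$ separately. This is precisely the ODE-type step you flag as the ``main obstacle'' for the converse, and it is needed in both directions. Second, once \eqref{dtPhdtnablaXYforVi} is available for all $t$, there is no need to ``pass to the first $t_0$ with $B_{t_0}^\sharp\neq 0$'' (a first such $t_0$ need not exist anyway): non-triviality of $g_t$ directly forces some $V_i(t)$ to be non-zero for some $t$, and that $V_i(t)$ is then fiberwise Killing for $g_0$.
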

\begin{proof}
Let $x \in N$. Let $U \subset N$ be a neighbourhood of $x$ on which there exist linearly independent vector fields $\{ W_{n+1}, \ldots , W_{n+p} \}$, then, by 
Remark \ref{remVit}, we can assume that \eqref{bsharpinv} holds on $\pi^{-1}(U)$. We will restrict our considerations to $\pi^{-1}(U)$.

For all $X \in \mathfrak{X}_{\mV}$ and all $i \in \{n+1,\ldots,n+p\}$, since $\pi^* W_i$ is projectable, we have $P_{\mH}^t [X,  \pi^* W_i ]=0$ and since $\mV$ is integrable, we have $P_{\mH}^t [X, P_{\mV}^t  \pi^* W_i ]=0$, so
\begin{equation} \label{lieXP2piW}
P_{\mH}^t  [X, P_{\mH}^t  \pi^* W_i] = P_{\mH}^t [X,  \pi^* W_i ] - P_{\mH}^t [X, P_{\mV}^t  \pi^* W_i ] = 0.
\end{equation}
Hence, from \eqref{dtnablaXYonly} and \eqref{bsharpinv} we obtain 
for all $X,Y \in \mathfrak{X}_{\mV}$
\begin{eqnarray} \label{dtPhdtnablaXYforVi}
&& 2 P_{\mH}^t \dt \nabla^t_{X} Y = P_{\mH}^t  \big( 
[X,  \sum\nolimits_{i=n+1}^{n+p} g_0(V_i(t) , Y) P_{\mH}^t  \pi^* W_i]  \nonumber \\
&&\quad + [Y,  \sum\nolimits_{i=n+1}^{n+p} g_0(V_i(t) , X) P_{\mH}^t  \pi^* W_i ] - B_t^\sharp P_{\mV}^0 (\nabla^0_{X}Y +  \nabla^0_{Y}X ) \big) \nonumber\\
&&= \sum\nolimits_{i=n+1}^{n+p} X(  g_0(V_i(t) , Y) ) P_{\mH}^t  \pi^* W_i  + \sum\nolimits_{i=n+1}^{n+p} Y(  g_0(V_i(t) , X) ) P_{\mH}^t  \pi^* W_i \nonumber\\
&&\quad - \sum\nolimits_{i=n+1}^{n+p} g_0(V_i(t) , \nabla^0_{X}Y +  \nabla^0_{Y}X) P_{\mH}^t  \pi^* W_i \nonumber\\
&&= \sum\nolimits_{i=n+1}^{n+p} \big( g_0(\nabla^0_X V_i(t) , Y) + g_0(\nabla^0_Y V_i(t) , X) \big) P_{\mH}^t  \pi^* W_i . 
\end{eqnarray}

The fibers of $\pi : (M, g_t) \rightarrow (N, g_N)$ remain totally geodesic if and only if $P_{\mH}^t \dt \nabla^t_{X} Y=0$ for all $X,Y \in \mathfrak{X}_{\mV}$. Indeed, using a frame from Lemma \ref{lemmaframe} we obtain
\begin{eqnarray*}
\dt P_{\mH}^t \nabla^t_{X} Y = P_{\mH}^t \dt \nabla^t_{X} Y - \sum\nolimits_{a=1}^n \sum\nolimits_{i=n+1}^{n+p} g_t( \nabla^t_{X} Y , \ee_i ) B_t(\ee_i , E_a) E_a.
\end{eqnarray*}
On the other hand, using \eqref{dtgtXYZwidetildemD} and \eqref{BXYfields}, we obtain
\begin{eqnarray*}
P_{\mV}^t \dt \nabla^t_{X} Y &=& \sum\nolimits_{a=1}^n g_t(\dt \nabla^t_{X} Y , E_a ) E_a = -\sum\nolimits_{a=1}^n B_t( \nabla^t_{X} Y  , E_a ) E_a \\
&=& -\sum\nolimits_{a=1}^n \sum\nolimits_{i=n+1}^{n+p} g_t( \nabla^t_{X} Y , \ee_i ) B_t( \ee_i , E_a ) E_a.
\end{eqnarray*}
It follows that $\dt P_{\mH}^t \nabla^t_{X} Y = P_{\mH}^t \dt \nabla^t_{X} Y + P_{\mV}^t \dt \nabla^t_{X} Y$. 
Hence, if $\dt h_t(X,Y) =0$ then $P_{\mH}^t \dt \nabla^t_{X} Y=0$.
On the other hand, by \eqref{BXYfields} and \eqref{P1P0eq}
\begin{eqnarray*}
\dt g_t(h_t(X,Y) , h_t(X,Y)) &=& B_t(h_t(X,Y) , h_t(X,Y)) + 2g_t( \dt P_{\mH}^t \nabla^t_{X} Y , P_{\mH}^t \nabla^t_{X} Y ) \\
&=& 2g_t( P_{\mH}^t \dt \nabla^t_{X} Y , P_{\mH}^t \nabla^t_{X} Y ) - 2g_t( P_{\mH}^t \dt P_{\mV}^t \nabla^t_{X} Y , P_{\mH}^t \nabla^t_{X} Y ) \\
&=& 2g_t( P_{\mH}^t \dt \nabla^t_{X} Y , P_{\mH}^t \nabla^t_{X} Y ) .
\end{eqnarray*}
Hence, if $P_{\mH}^t \dt \nabla^t_{X} Y =0$ and $h_0(X,Y)=0$, then $h_t(X,Y)=0$. 

Suppose that $g_t \neq g_0$ for some $t \in (-\epsilon, \epsilon)$ at some $y \in \mF_x$. 
If all $V_i(t)=0$ for all $t \in (-\epsilon, \epsilon)$, then $g_t = g_0$, so to have a non-trivial variation $g_t$ such that $P_{\mH}^t \dt \nabla^t_X Y=0$ for all $X,Y \in \mathfrak{X}_{\mV}$, at least one non-trivial vector field $\xi \in \{V_{n+1}(t) , \ldots , V_{n+p}(t) , t \in (-\epsilon, \epsilon) \}$ must satisfy
\[
g_0(\nabla^0_X \xi , Y) + g_0(\nabla^0_Y \xi , X)=0
\]
for all $X,Y \in \mathfrak{X}_{\mV}$, i.e., for all $x \in U$ the restriction of $\xi$ to $(\mF_{x} , g_0 \vert_{ \mF_{x} } )$ is a Killing vector field. Since $\xi \neq 0$ at some $y \in \pi^{-1}(U)$, 
let $\rho$ be a smooth function compactly supported in $U$ and such that $\rho(\pi(y))>0$, then $(\rho \circ \pi) \cdot \xi$ is a smooth vertical vector field on $M$, whose restriction to every fiber of $\pi$ is a Killing field.

On the other hand, suppose that on $(M,g_0)$ there exists a bounded non-trivial vertical vector field $\xi$, whose restriction to every fiber is Killing. Let $\{ W_{n+1} , \ldots, W_{n+p} \}$ be linearly independent vector fields on an open set $U \subset N$, such that $\xi \neq 0$ at some $x \in \pi^{-1}(U)$. We define a variation ${\tilde g}_t$ on $\pi^{-1}(U)$ by Theorem \ref{corbsharpglobal}, 
setting $V_{n+1} = \xi$ and $V_{n+2} = \ldots = V_{n+p} =0$, 
and then use a non-zero, compactly supported in $U$ function $\rho$, such that $0 \leq \rho \leq 1$ and $\rho(\pi(x))>0$, to define a variation $g_t$ on $M$ by $\dt g_t = (\rho \circ \pi) \cdot \dt {\tilde g}_t$.
\end{proof}

\begin{remark} \label{remextendKilling}
To define a variation that preserves a Riemannian submersion with totally geodesic fibers, it is sufficient to assume that there exists a bounded Killing vector field $\xi$ on a fiber 
$(\mF_x , g_0 \vert_{\mF_x})$ 
for one $x\in N$, 
because all fibers of a Riemannian submersion with totally geodesic fibers are isometric and we can extend such $\xi$ to a vertical field on $(M,g_0)$, whose restriction to every fiber is a Killing vector field. The isometries between totally geodesic fibers of $\pi$ on $(M, g_0)$ are defined by flows of $g_0$-horizontal lifts of vector fields from $N$ \cite{GromollWalschap}.
\end{remark}

\begin{theorem} \label{thRSMFdivfreeglobal}
Let $\pi : (M, g_0) \rightarrow (N, g_N)$ be a Riemannian submersion with minimal fibers. If there exists a non-trivial 
variation $\{ g_t, t \in (-\epsilon, \epsilon) \} \subset \Riem(M, \mV, g_0)$ 
such that $\pi : (M, g_t) \rightarrow (N, g_N)$ is a Riemannian submersion with minimal fibers for all $t \in (-\epsilon, \epsilon)$, 
then there exists a non-trivial vertical vector field $\xi$, whose restriction to $(\mF_{x} , g_0 \vert_{\mF_{x}})$ is divergence-free for all $x \in N$.
On the other hand, if on $(M, g_0)$ 
exists a non-trivial bounded vertical 
vector field 
$\xi$, whose restriction to every fiber of $\pi$ is divergence-free, 
then there exists a 
non-trivial variation $g_t$ with the above properties.
\end{theorem}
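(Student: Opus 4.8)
The plan is to mirror closely the structure of the proof of Theorem \ref{thRSGFKillingglobal}, replacing the condition ``totally geodesic'' by ``minimal'' throughout. First I would localize: fix $x \in N$, choose a neighbourhood $U$ with linearly independent vector fields $\{W_{n+1}, \ldots, W_{n+p}\}$, and invoke Remark \ref{remVit} to assume \eqref{bsharpinv} holds on $\pi^{-1}(U)$, with $t$-dependent vertical fields $V_i(t)$. The key computation is the variation of the mean curvature: I would start from \eqref{P2dtH}, substitute \eqref{bsharpinv}, and carry out the same Leibniz-rule manipulation as in \eqref{dtPhdtnablaXYforVi} (using that $\pi^* W_i$ is projectable, that $\mV$ is integrable, so $P_{\mH}^t[E_a, P_{\mH}^t \pi^* W_i] = 0$), obtaining
\[
2 P_{\mH}^t \dt H_t = \sum\nolimits_{a=1}^n \sum\nolimits_{i=n+1}^{n+p} \big( 2 g_0(\nabla^0_{E_a} V_i(t), E_a) \big) P_{\mH}^t \pi^* W_i = \sum\nolimits_{i=n+1}^{n+p} 2\, (\Div^0_{\mV} V_i(t))\, P_{\mH}^t \pi^* W_i .
\]
Since $\{P_{\mH}^t \pi^* W_i\}$ are linearly independent, $P_{\mH}^t \dt H_t = 0$ if and only if $\Div^0_{\mV} V_i(t) = 0$ for every $i$, i.e. each $V_i(t)$ restricts to a divergence-free field on every fiber in $\pi^{-1}(U)$.

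Next I would handle the equivalence between ``$H_t$ stays zero'' and ``$P_{\mH}^t \dt H_t = 0$'', paralleling the totally-geodesic argument. Using the $g_t$-orthonormal frame of Lemma \ref{lemmaframe} together with \eqref{dtgtXYZwidetildemD} and \eqref{BXYfields}, one shows $\dt H_t = P_{\mH}^t \dt H_t + P_{\mV}^t \dt H_t$ where the vertical part is $-\sum_a B_t(\nabla^t_{E_a}E_a, \cdot)^\sharp$-type and hence does not affect whether $H_t$ is horizontal; more directly, one computes $\dt\, g_t(H_t, H_t) = 2 g_t(P_{\mH}^t \dt H_t, H_t)$ (the $B_t(H_t,H_t)$ term cancels against the $P_{\mV}^t$-correction exactly as in the cited proof, since $H_t \in \mathfrak{X}_{\mH(t)}$), so $P_{\mH}^t \dt H_t = 0$ and $H_0 = 0$ force $H_t = 0$. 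Then, for the forward implication: if the variation is non-trivial, not all $V_i(t)$ vanish identically, so some non-trivial $\xi \in \{V_i(t)\}$ has vanishing vertical divergence on each fiber of $\pi^{-1}(U)$; multiplying by $\rho \circ \pi$ for a bump function $\rho$ supported in $U$ with $\rho(x) > 0$ extends it to a non-trivial vertical field on $M$ divergence-free on every fiber. For the converse: given a bounded non-trivial vertical $\xi$ divergence-free on every fiber, apply Theorem \ref{corbsharpglobal} with $V_{n+1} = \xi$, $V_{n+2} = \cdots = V_{n+p} = 0$ over $\pi^{-1}(U)$, then damp by $\rho \circ \pi$ to globalize, giving $\dt g_t = (\rho\circ\pi)\cdot \dt\tilde g_t$.

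The main obstacle I anticipate is the cancellation in the variation-of-mean-curvature step: unlike the totally geodesic case where one works with the full tensor $h_t$, here one sums over a frame and must be careful that the terms $g_0(\nabla^0_{E_a} V_i(t), E_a)$ assemble exactly into the vertical divergence $\Div^0_{\mV} V_i(t)$ as defined (with no leftover horizontal-derivative contributions), and that the ``$E_a(g_0(V_i(t), E_a))$'' pieces combine correctly with the $B_t^\sharp P_{\mV}^0 \nabla^0_{E_a} E_a$ correction term in \eqref{P2dtH} — this requires using $g_0(V_i(t), \nabla^0_{E_a} E_a) = -g_0(\nabla^0_{E_a} V_i(t), E_a) + E_a(g_0(V_i(t),E_a))$ and that all $E_a$ are vertical so $\nabla^0$-compatibility applies, exactly as the computation \eqref{dtPhdtnablaXYforVi} does with $X = Y = E_a$ before summing. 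A secondary point is ensuring the extension of $\xi$ (the converse direction) preserves minimality on all fibers, which follows since scaling a fiberwise-divergence-free field by a function constant along fibers keeps it divergence-free on each fiber.
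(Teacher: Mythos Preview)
Your proposal is correct and follows essentially the same approach as the paper. The paper's proof is extremely brief: it simply states that the argument is analogous to that of Theorem~\ref{thRSGFKillingglobal} and records the key computation \eqref{P2dtHtlifts}, namely $P_{\mH}^t\,\dt H_t=\sum_i(\Div^0_{\mV}V_i)\,P_{\mH}^t\pi^*W_i$, which is exactly the identity you derive (your factors of $2$ on both sides cancel harmlessly); your handling of the equivalence $H_t\equiv0\Leftrightarrow P_{\mH}^t\dt H_t\equiv0$, the localization via Remark~\ref{remVit}, and the globalization by a bump function $\rho\circ\pi$ all match the template of Theorem~\ref{thRSGFKillingglobal} as the paper intends.
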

\begin{proof}
The proof is analogous to that of Theorem \ref{thRSGFKillingglobal} and based on the following computation:
we assume that \eqref{bsharpinv} holds and using 
\eqref{lieXP2piW}, we obtain from \eqref{P2dtH}
\begin{eqnarray} \label{P2dtHtlifts}
P_{\mH}^t  \dt H_t 
&=& P_{\mH}^t  \sum\nolimits_{a=1}^n \big( [E_a, \sum\nolimits_{i=n+1}^{n+p} g_0(V_i , E_a ) P_{\mH}^t  \pi^* W_i ] \nonumber \\
&& - \sum\nolimits_{i=n+1}^{n+p} g_0(V_i , P_{\mV}^0 \nabla^0_{E_a} E_a ) P_{\mH}^t  \pi^* W_i  \big) \nonumber\\
&=&  \sum\nolimits_{a=1}^n \sum\nolimits_{i=n+1}^{n+p}  \big( E_a ( g_0(V_i , E_a ) ) -  g_0(V_i , P_{\mV}^0 \nabla^0_{E_a} E_a ) \big) P_{\mH}^t  \pi^* W_i  \nonumber\\
&=& \sum\nolimits_{a=1}^n \sum\nolimits_{i=n+1}^{n+p}  g_0( \nabla^0_{E_a} V_i , E_a) P_{\mH}^t  \pi^* W_i  \nonumber\\
&=& \sum\nolimits_{i=n+1}^{n+p}  (\Div^0_{\mV} V_i ) P_{\mH}^t  \pi^* W_i .
\end{eqnarray}
\end{proof}

\begin{theorem} \label{thRSUFconfKillingglobal}
Let $\pi : (M, g_0) \rightarrow (N, g_N)$ be a Riemannian submersion with totally umbilical fibers. If there exists a 
non-trivial variation $\{ g_t, t \in (-\epsilon, \epsilon) \} \subset \Riem(M, \mV , g_0)$ 
such that $\pi : (M, g_t) \rightarrow (N, g_N)$ is a Riemannian submersion with totally umbilical fibers for all $t \in (-\epsilon, \epsilon)$, 
then there exists a non-trivial vertical vector field $\xi$, whose restriction to $(\mF_{x} , g_0 \vert_{\mF_{x}})$ is a conformal Killing field for all $x \in N$.
On the other hand, if 
on $(M,g_0)$ exists a non-trivial bounded vertical 
vector field
$\xi$, whose restriction to every fiber of $\pi$ is 
conformal Killing, 
then there exists a 
non-trivial variation $g_t$ with the above properties.
\end{theorem}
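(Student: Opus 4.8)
The plan is to follow the scheme of the proofs of Theorems~\ref{thRSGFKillingglobal} and \ref{thRSMFdivfreeglobal}. I would fix $x\in N$, pick a neighbourhood $U\subset N$ on which there are linearly independent $\{W_{n+1},\dots,W_{n+p}\}$, and by Remark~\ref{remVit} work on $\pi^{-1}(U)$, where \eqref{bsharpinv} holds with some $t$-dependent vertical fields $\{V_{n+1}(t),\dots,V_{n+p}(t)\}$; the passage back to $M$ is, as in Theorem~\ref{thRSGFKillingglobal}, by multiplying the variation by $\rho\circ\pi$ for a suitable bump function $\rho$ on $N$, which on each fiber merely rescales a vector field by a constant and hence preserves the conformal Killing property.

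The crux is a clean characterization of preservation of total umbilicity, playing the role that ``$P_{\mH}^t\dt\nabla^t_XY=0$'' plays in the totally geodesic case and ``$P_{\mH}^t\dt H_t=0$'' in the minimal case. Writing $h_t^{\circ}(X,Y)=h_t(X,Y)-\tfrac1n g_t(X,Y)H_t$ for the trace-free part of the second fundamental form of the fibers (which is $g_t$-horizontal, with $g_t(X,Y)=g_0(X,Y)$ for $X,Y\in\mathfrak{X}_{\mV}$), I claim the fibers of $\pi:(M,g_t)\to(N,g_N)$ stay totally umbilical for all $t$ if and only if
\[
P_{\mH}^t\dt\nabla^t_XY=\tfrac1n g_0(X,Y)\,P_{\mH}^t\dt H_t\qquad\text{for all }t\text{ and all }X,Y\in\mathfrak{X}_{\mV}.
\]
For this I would argue exactly as in the proof of Theorem~\ref{thRSGFKillingglobal}: since $h_t^{\circ}(X,Y)$ is $g_t$-horizontal and, by \eqref{BXYfields}, $B_t$ vanishes on pairs of $g_t$-horizontal vectors, differentiating $g_t(h_t^{\circ}(X,Y),h_t^{\circ}(X,Y))$ and using $\dt(P_{\mH}^t\nabla^t_XY)=P_{\mH}^t\dt\nabla^t_XY+P_{\mV}^t\dt\nabla^t_XY$ together with $P_{\mH}^t\dt H_t=\sum_{a}P_{\mH}^t\dt\nabla^t_{E_a}E_a$ yields
\[
\dt\, g_t(h_t^{\circ}(X,Y),h_t^{\circ}(X,Y))=2\,g_t\bigl(P_{\mH}^t\dt\nabla^t_XY-\tfrac1n g_0(X,Y)\,P_{\mH}^t\dt H_t,\ h_t^{\circ}(X,Y)\bigr),
\]
the $\mV$-valued contributions being annihilated by the pairing with the horizontal $h_t^{\circ}(X,Y)$. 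Hence the displayed condition forces $g_t(h_t^{\circ}(X,Y),h_t^{\circ}(X,Y))\equiv g_0(h_0^{\circ}(X,Y),h_0^{\circ}(X,Y))=0$ when the fibers start totally umbilical; conversely, if the fibers are totally umbilical for all $t$, then differentiating $h_t=\tfrac1n g_0(\cdot,\cdot)H_t$ and applying $P_{\mH}^t$ gives the displayed condition, while the $\mV$-part of that derivative identity imposes nothing further, since $P_{\mV}^t\dt\nabla^t_XY=-\sum_aB_t(h_t(X,Y),E_a)E_a$ and $h_t$ is already umbilical.

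With the criterion in hand I would substitute \eqref{dtPhdtnablaXYforVi} and \eqref{P2dtHtlifts}, turning it into
\[
\sum_{i=n+1}^{n+p}\tfrac12\bigl(g_0(\nabla^0_XV_i(t),Y)+g_0(\nabla^0_YV_i(t),X)\bigr)P_{\mH}^t\pi^*W_i=\sum_{i=n+1}^{n+p}\tfrac1n g_0(X,Y)\,(\Div^0_{\mV}V_i(t))\,P_{\mH}^t\pi^*W_i,
\]
and, since $\{P_{\mH}^t\pi^*W_{n+1},\dots,P_{\mH}^t\pi^*W_{n+p}\}$ is a frame of $\mH(t)$ on $\pi^{-1}(U)$, compare coefficients to get, for each $i$ and all $X,Y\in\mathfrak{X}_{\mV}$,
\[
\tfrac12\bigl(g_0(\nabla^0_XV_i(t),Y)+g_0(\nabla^0_YV_i(t),X)\bigr)=\tfrac1n g_0(X,Y)\,\Div^0_{\mV}V_i(t),
\]
i.e.\ the restriction of each $V_i(t)$ to every fiber $(\mF_x,g_0\vert_{\mF_x})$ is a conformal Killing field. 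A non-trivial such variation then yields, for some $i$ and some $t_0$, a non-trivial $V_i(t_0)$ conformal Killing on the fibers over $U$, and $(\rho\circ\pi)\,V_i(t_0)$ is the required global vertical field $\xi$. For the converse, given a bounded non-trivial vertical $\xi$ conformal Killing on every fiber, I would take the variation $\tilde g_t$ on $\pi^{-1}(U)$ from Theorem~\ref{corbsharpglobal} with $V_{n+1}=\xi$, $V_{n+2}=\dots=V_{n+p}=0$, whose fibers stay totally umbilical by the criterion, and extend it to $M$ by $\dt g_t=(\rho\circ\pi)\,\dt\tilde g_t$, exactly as in the proof of Theorem~\ref{thRSGFKillingglobal}; the extended variation satisfies \eqref{bsharpinv} with $V_{n+1}(t)=(\rho\circ\pi)\xi$, again conformal Killing on each fiber.

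The step I expect to be the main obstacle is establishing the umbilicity criterion in the clean displayed form: unlike the totally geodesic and minimal cases, where the preserved quantity vanishes identically and can be differentiated freely, here one must separate the $\mH(t)$- and $\mV$-components of $\dt\bigl(h_t-\tfrac1n g_0(\cdot,\cdot)H_t\bigr)$ and verify that the vertical component imposes nothing beyond what \eqref{BVVzero} and \eqref{BXYfields} already force, so that only the horizontal relation survives. Once this is done, the rest is a transcription of the computations already carried out for Killing and divergence-free fields.
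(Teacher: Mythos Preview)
Your proposal is correct and follows essentially the same route as the paper. The paper's proof simply computes $P_{\mH}^t\dt\bigl(\nabla^t_XY-\tfrac1n g_t(X,Y)H_t\bigr)$ directly from \eqref{dtnablaXYonly}, \eqref{P2dtHtlifts}, and $B_t(X,Y)=0$ for vertical $X,Y$, obtaining your conformal Killing expression, and then defers the preservation criterion and the bump-function argument to ``analogously to Theorem~\ref{thRSGFKillingglobal}''; you have merely made that analogy explicit.
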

\begin{proof}
From 
\eqref{BVVzero},
we obtain for all $X,Y \in \mathfrak{X}_{\mV}$
\begin{eqnarray}
P_{\mH}^t  \dt ( g_t(X,Y) H_t ) &=& B_t(X,Y) P_{\mH}^t  H_t + g_t(X,Y) P_{\mH}^t  \dt H_t \nonumber\\
&=& g_t(X,Y) P_{\mH}^t  \dt H_t.
\end{eqnarray}
Hence, from \eqref{dtnablaXYonly} and \eqref{P2dtHtlifts} we obtain
\begin{eqnarray} 
2 P_{\mH}^t \dt ( \nabla^t_{X} Y - \frac{1}{n}g_t(X,Y)H_t ) &=& \sum\nolimits_{i=n+1}^{n+p} \big( g_0(\nabla^0_X V_i , Y) + g_0(\nabla^0_Y V_i , X) \nonumber \\
&& -\frac{2}{n} g_t(X,Y) \Div^0_{\mV} V_i \big) P_{\mH}^t  \pi^* W_i .
\end{eqnarray}
The proof then follows analogously to that of Theorem \ref{thRSGFKillingglobal}.
\end{proof}

\begin{remark}
Considering a compact manifold $(M,g)$ in Theorems \ref{thRSGFKillingglobal}-\ref{thRSUFconfKillingglobal} makes their formulation shorter, as vector fields $\xi$ are then always bounded. Also, if $\xi \in \mathfrak{X}_{\mF_x}$ is a conformal Killing vector field on a 
fiber $\mF_x$, it can be extended to a vector field, whose restriction to every fiber in some neighbourhood of $x$ is also conformal Killing - similarly as in Remark \ref{remextendKilling}, only now maps between totally umbilical fibers defined by flows of horizontal lifts of vector fields from $N$ will be conformal. An analogous reasoning applies to vertical divergence-free vector fields on minimal fibers, then the maps between minimal fibers preserve the volume form induced on every fiber, and hence also vertical divergence-free vector fields.
\end{remark}

\begin{theorem}
Let $\pi : (M, g_0) \rightarrow (N, g_N)$ be a Riemannian submersion with 
minimal fibers.
Then for every open set $U \subset M$ there exists a 
non-trivial variation $\{ g_t , t \in (-\epsilon, \epsilon) \} \subset \Riem(M, \mV ,g_0 )$ such that $g_t = g_0$ on $M \setminus U$ and $\pi : (M, g_t) \rightarrow (N, g_N)$ is a Riemannian submersion with minimal fibers for all $t \in (-\epsilon, \epsilon)$.
\end{theorem}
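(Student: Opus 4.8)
The plan is to produce the required variation from a single vertical vector field, exactly as in the second half of Theorem~\ref{thRSMFdivfreeglobal}, but with the additional feature that the field is supported inside the prescribed set $U$. Precisely, it suffices to construct a non-trivial bounded vertical vector field $\xi$ on $M$ which is divergence-free on every fiber of $\pi$ and whose support is a compact subset of $U$: then the variation obtained from $\xi$ via Theorem~\ref{corbsharpglobal} will equal $g_0$ off $\mathrm{supp}\,\xi$, hence off $U$, and will keep the fibers minimal. The point that needs care is that one cannot simply cut off a globally defined fiberwise divergence-free field by a bump function: multiplying $\xi$ by a function which is not constant along the fibers spoils $\Div^0_{\mV}\xi=0$, while multiplying by $\rho\circ\pi$ (as is done in Theorem~\ref{thRSMFdivfreeglobal}) enlarges the support to a $\pi$-saturated set, in general larger than $U$. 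So $\xi$ has to be built directly, and this is where the hypothesis $n:=\dim\mV\ge 2$ enters.

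For the construction I would fix $x_0\in U$ together with an adapted chart $(y^1,\dots,y^n,z^1,\dots,z^p)$ around $x_0$, with domain inside $U$, in which $\partial_{y^1},\dots,\partial_{y^n}$ span $\mV$. Writing $\sigma:=\sqrt{\det\big(g_0(\partial_{y^a},\partial_{y^b})\big)}>0$ for the volume density induced on the fibers, a vertical field $\xi=\sum_a\xi^a\,\partial_{y^a}$ has fiberwise divergence $\Div^0_{\mV}\xi=\sigma^{-1}\sum_a\partial_{y^a}\big(\sigma\,\xi^a\big)$. Choosing $\phi\in C^\infty(M)$, not identically zero, supported in a compact subset of the chart domain, and setting $\sigma\,\xi^1:=\partial_{y^2}\phi$, $\sigma\,\xi^2:=-\partial_{y^1}\phi$, $\xi^3=\dots=\xi^n=0$, one gets $\sum_a\partial_{y^a}(\sigma\xi^a)=0$; so (extended by $0$) $\xi$ is a smooth vertical field on $M$, divergence-free along every fiber, bounded because compactly supported, non-trivial because not all first partials of $\phi$ vanish identically, and with $\mathrm{supp}\,\xi\subset\mathrm{supp}\,\phi\subset U$.

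Next I would shrink $\phi$, if necessary, so that $\pi(\mathrm{supp}\,\phi)$ lies in a relatively compact coordinate neighbourhood $U''\subset N$, and apply Theorem~\ref{corbsharpglobal} to the Riemannian submersion $\pi\colon(\pi^{-1}(U''),g_0)\to(U'',g_N)$ with $V_{n+1}=\xi$, $V_{n+2}=\dots=V_{n+p}=0$, and $W_{n+1},\dots,W_{n+p}$ a coordinate frame on $U''$; this yields a variation $\{\tilde g_t\}\subset\Riem(\pi^{-1}(U''),\mV,g_0)$ preserving $\pi$. In a local $g_0$-orthonormal adapted frame the functions $\lambda_{ai}$ of \eqref{deflambdaai} are the components of $g_0(\xi,E_a)\,\pi^*W_{n+1}$ in $\{\ee_i\}$, so they vanish wherever $\xi$ does, and by \eqref{bsharp1}--\eqref{bsharp3} the solution then satisfies $g_{\mu\nu}(t)\equiv\delta_{\mu\nu}$ there; that is, $\tilde g_t=g_0$ on $\pi^{-1}(U'')\setminus\mathrm{supp}\,\xi$. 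Gluing, $g_t:=\tilde g_t$ on $\pi^{-1}(U'')$ and $g_t:=g_0$ on $M\setminus\mathrm{supp}\,\xi$ defines a smooth variation $\{g_t\}\subset\Riem(M,\mV,g_0)$, equal to $g_0$ on $M\setminus\mathrm{supp}\,\xi\supset M\setminus U$, which preserves $\pi$ because being a Riemannian submersion is a pointwise condition holding on each member of this open cover; and it is non-trivial since, by \eqref{bbsharp}, $\dt g_t(E_a,\ee_i)|_{t=0}=\lambda_{ai}$, which is nonzero at any point where $\xi\neq0$.

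It remains to check that the fibers stay minimal. Since $\Div^0_{\mV}V_i=0$ for every $i$, formula \eqref{P2dtHtlifts} gives $P_{\mH}^t\dt H_t=0$ on $\pi^{-1}(U'')$; as $H_t$ is $g_t$-horizontal, \eqref{BXYfields} gives $B_t(H_t,H_t)=0$, hence $\dt g_t(H_t,H_t)=B_t(H_t,H_t)+2g_t(\dt H_t,H_t)=2g_t(P_{\mH}^t\dt H_t,H_t)=0$; with $H_0=0$ (the fibers of $g_0$ being minimal) this forces $H_t=0$ on $\pi^{-1}(U'')$, and $H_t=0$ trivially on $M\setminus\mathrm{supp}\,\xi$, so $\pi\colon(M,g_t)\to(N,g_N)$ has minimal fibers for all $t$. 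The step I expect to be the real obstacle is the very first one --- arranging a non-trivial fiberwise divergence-free vertical field supported inside an arbitrary, not necessarily $\pi$-saturated, open set $U$, which forces the hand-made \emph{stream function} construction and the use of $\dim\mV\ge2$; once $\xi$ is in hand, the rest is just a localization of the constructions behind Theorems~\ref{corbsharpglobal} and~\ref{thRSMFdivfreeglobal}, together with the remark that the variation is frozen wherever $\xi$ vanishes.
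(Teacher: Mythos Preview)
Your proof is correct and takes a genuinely different, more elementary route than the paper's. The paper proceeds abstractly: it observes that the fiberwise divergence is an underdetermined elliptic operator, invokes Delay's result to obtain a compactly supported divergence-free field on a single fiber $\mF_x$, and then extends it to nearby fibers using the flows of $g_0$-horizontal lifts, which are volume-preserving precisely because the fibers are minimal; finally it cuts off in the base by $\rho\circ\pi$. You bypass both the PDE machinery and the horizontal-transport step by building $\xi$ directly in an adapted chart via a stream function (the ``curl'' trick $\sigma\xi^1=\partial_{y^2}\phi$, $\sigma\xi^2=-\partial_{y^1}\phi$), so the field is already defined on a full neighbourhood in $M$ and already supported inside $U$. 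Your localization argument---that the ODE system \eqref{bsharp1}--\eqref{bsharp3} freezes to $g_{\mu\nu}\equiv\delta_{\mu\nu}$ wherever all $\lambda_{ai}$ vanish---is also more explicit than the paper's treatment.

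One point worth flagging: you correctly isolate the hidden hypothesis $n=\dim\mV\ge 2$, which the stream-function construction needs. The paper's proof needs it too (the symbol of $\Div^0_{\mV}$ is non-injective only when $n\ge 2$, so ``underdetermined elliptic'' fails for $n=1$), but the paper does not say so. In this respect your write-up is sharper than the original.
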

\begin{proof}
Let $x \in \pi(U)$ 
and let $\mP_x$ be the differential operator $\Div^0_{\mV}$ acting on the space $\mathfrak{X}_{\mF_x}$ 
of vector fields 
on the fiber $\mF_x$. 
Then $\mP_x$ is underdetermined elliptic, i.e., its symbol is surjective and non-injective 
\cite{BEM}.
Hence, $\mP_x$ 
has infinite-dimensional kernel $\mK_x$, which includes an infinite set of compactly supported vector fields 
on $\mF_x$ \cite{Delay}. Let 
$\{ V_{n+1} , \ldots , V_{n+p} \}$ be such vector fields. 
We can assume that $\pi(U)$ is a normal coordinate neighbourhood of $x$; for every $y \in \pi(U)$ let $\phi_{x,y} : \mF_x \rightarrow \mF_y$ be the map induced by the flow of the $g_0$-horizontal lift of the vector field tangent to the geodesic from $x$ to $y$. Since the fibers are minimal, the maps $\phi_{x,y}$ preserve the volume form induced on the fibers (see e.g., \cite[Chapter XV, Theorem 6.6]{Lang}). Hence, extending $\{ V_{n+1} , \ldots, V_{n+p} \}$ to $U$ by maps $\phi_{x,y *}$, we obtain a set of vertical vector fields, such that the restriction to every fiber in $U$ of each of them is divergence-free and compactly supported on the fiber.
Let $\rho$ be a smooth compactly supported function on $\pi(U)$. Multiplying each $V_i$ by $\rho \circ \pi$ 
we obtain a set of compactly supported, vertical vector fields in kernel of $\Div^0_{\mV}$, that by Theorem \ref{corbsharpglobal} defines a family $g_t$ with the required properties.
\end{proof}

\section{Variational problems} \label{secvarprobs} 

Let $\pi : (M, g_0) \rightarrow (N, g_N)$ be a Riemannian submersion and let $\vol_{g_t}$ be the volume form on $(M,g_t)$.
For $(0,2)$-tensor fields $Q,S$ on $M$, 
let
\[
\< Q , S \>_{\mV} = \sum\nolimits_{a=1}^n Q(E_a, E_b) S(E_a, E_b) ,
\]
where $\{E_a , a =1 ,\ldots ,n\} \subset \mathfrak{X}_{\mV}$ is a set of local $g_0$-orthonormal vector fields.

Let $J_{h}, J_{H} : \Riem(M , \mV , g_0) \rightarrow \mathbb{R}$ be functionals defined by formulas
\begin{eqnarray}
\label{Jh2def}
J_h(g_t) &=& \int_{M} \| h_t \|^2 \, \vol_{g_t} , \\ 
\label{JHdef}
J_H(g_t) &=& \int_{M} g_t(H_t,H_t) \, \vol_{g_t} , 
\end{eqnarray}
where $\| h_t \|^2 = \sum\nolimits_{a=1}^{n} g_t( h_t(E_a, E_b) , h_t(E_a, E_b) )$ for a local $g_t$-orthonormal frame $\{ E_a , a=1, \ldots,  n \} \subset \mathfrak{X}_{\mV}$. 
We say that a metric $g_0$ is critical for \eqref{Jh2def} 
(resp. 
\eqref{JHdef}) with respect to variations preserving the Riemannian submersion $\pi$, if for all 
variations $\{ g_t , t \in (-\epsilon,  \epsilon) \} \subset \Riem(M, \mV,  g_0)$, such that $\pi : (M, g_t) \rightarrow (N, g_N)$ is a Riemannian submersion for all $t \in (-\epsilon, \epsilon)$, we have $\dt J_h(g_t)  \vert_{t=0} = 0 $ (resp. $\dt J_H(g_t) \vert_{t=0} = 0$).

\begin{remark}
We note that all variations $\{ g_t , t \in (-\epsilon,  \epsilon) \} \subset \Riem(M, \mV,  g_0)$ preserving the Riemannian submersion $\pi$ also preserve the volume form $\vol_{g_0}$, since $\dt \vol_{g_t} = \frac{1}{2} \tr B_t^\sharp \vol_{g_t}$ \cite{Besse}. Indeed, let $\{E_a, \ee_i(t) \}$ be an adapted frame as in Lemma \ref{lemmaframe}, then by \eqref{BVVzero} and \eqref{BXY} we have
\begin{eqnarray*}
\tr B_t^\sharp &=& \sum\nolimits_{a=1}^{n} B_t(E_a,E_a) + \sum\nolimits_{i=n+1}^{n+p} B_t(\ee_i(t) , \ee_i(t)) \\
&=& 2 \sum\nolimits_{i=n+1}^{n+p} \sum\nolimits_{b=1}^{n} B_t(\ee_i(t) , E_b) g_t(E_b, \ee_i(t)) = 0.
\end{eqnarray*} 
\end{remark}

\subsection{Critical metrics} \label{seccritpoints}

We will use notation from Section \ref{secnotation}, especially its last paragraph. In particular, we have
$P_{\mH}^0 ( \delta_{0} h_0) (X) = P_{\mH}^0 \sum\nolimits_{a=1}^n (\nabla^0_{E_a} h_0)(E_a, X)$ for all $X \in \mathfrak{X}_{\mV}$ and any $g_0$-orthonormal local frame 
$\{E_1, \ldots , E_n\} \subset \mathfrak{X}_{\mV}$.

\begin{theorem} \label{thcrith}
Let $\pi : (M, g_0) \rightarrow (N, g_N)$ be a Riemannian submersion, 
where $M$ is compact.
Then $g_0$ is critical for 
\eqref{Jh2def}, with respect to variations preserving the Riemannian submersion $\pi$, if and only if $P_{\mH}^0 (\delta_{0} h_0) =0$.
\end{theorem}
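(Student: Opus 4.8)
The plan is to differentiate $J_h$ along a variation preserving $\pi$, reduce to a pointwise expression in $B_t^\sharp$, and integrate by parts along the fibers. First I would use that every variation preserving $\pi$ also preserves the volume form (the Remark preceding Section \ref{seccritpoints}): since $\tr B_t^\sharp=0$ we have $\dt\vol_{g_t}=0$, hence $\dt J_h(g_t)\vert_{t=0}=\int_M\dt\|h_t\|^2\vert_{t=0}\,\vol_{g_0}$, and the whole problem becomes the analysis of the local function $\dt\|h_t\|^2\vert_{t=0}$.

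To compute it, take a $g_0$-orthonormal (hence $g_t$-orthonormal) local vertical frame $\{E_a\}$, so that $\|h_t\|^2=\sum_{a,b}g_t(h_t(E_a,E_b),h_t(E_a,E_b))$. Each $h_t(E_a,E_b)$ is $g_t$-horizontal, so by \eqref{BXYfields} the $\dt g_t$-term vanishes on these pairs, and since $\dt P_{\mH}^t$ is vertical-valued the horizontal pairing collapses the derivative to $\dt\|h_t\|^2\vert_{t=0}=2\sum_{a,b}g_0\big(P_{\mH}^0\dt\nabla^0_{E_a}E_b,\,h_0(E_a,E_b)\big)$. Working on $\pi^{-1}(U)$ over a neighbourhood $U\subset N$ carrying a local frame $\{W_i\}$ of $TN$, and writing $B_0^\sharp X=\sum_i g_0(V_i,X)\,\pi^*W_i$ for vertical $X$ (Remark \ref{remVit}), I would substitute \eqref{dtPhdtnablaXYforVi} at $t=0$ and use the symmetry of $h_0$ to obtain $\dt\|h_t\|^2\vert_{t=0}=2\sum_i\sum_{a,b}g_0(\nabla^0_{E_a}V_i,E_b)\,S_i(E_a,E_b)$, where $S_i(X,Y)=g_0(\pi^*W_i,h_0(X,Y))$ is a symmetric $2$-tensor on each fiber; equivalently $\dt\|h_t\|^2\vert_{t=0}=2\sum_i\langle S_i,\delta_0^*V_i^\flat\rangle_{\mV}$.

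The crucial step is the integration by parts on the fibers. Since $M$ is compact each fiber $\mF_x$ is a closed manifold, so using that $\delta_0^*$ is the formal adjoint of $\delta_0$ one trades $\langle S_i,\delta_0^*V_i^\flat\rangle_{\mV}$ for $\langle\delta_0 S_i,V_i^\flat\rangle$ up to the vertical divergence of a vertical vector field on $\mF_x$. Rewriting $(\delta_0 S_i)(V_i)$ in terms of $g_0\big(\pi^*W_i,P_{\mH}^0(\delta_0 h_0)(V_i)\big)$, integrating over $M$, and dropping the divergence terms by the divergence theorem on each fiber, the $V_i$'s and $W_i$'s recombine into a frame- and parametrization-independent, hence global, identity of the form $\dt J_h(g_t)\vert_{t=0}=-2\int_M\sum_{a=1}^{n}g_0\big(B_0^\sharp E_a,\;P_{\mH}^0(\delta_0 h_0)(E_a)\big)\,\vol_{g_0}$, whose integrand is the $g_0$-inner product of the bundle map $B_0^\sharp\vert_{\mV}\colon\mV\to\mH$ with the bundle map $P_{\mH}^0(\delta_0 h_0)\colon\mV\to\mH$.

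From this formula both directions follow. If $P_{\mH}^0(\delta_0 h_0)=0$ the integrand is identically zero for every variation, so $g_0$ is critical. Conversely, if $g_0$ is critical, apply the formula to variations supported in $\pi^{-1}(U)$ for small $U\subset N$: by Theorem \ref{corbsharpglobal} together with a cut-off function one can prescribe $B_0^\sharp\vert_{\mV}$ to be an arbitrary compactly supported bundle map $\mV\to\mH$, so the fundamental lemma of the calculus of variations forces $P_{\mH}^0(\delta_0 h_0)=0$ on $\pi^{-1}(U)$, and hence on all of $M$. The main obstacle is exactly the integration-by-parts step: one must carry the computation of $\dt\|h_t\|^2\vert_{t=0}$ all the way to a sum of a vertical divergence (annihilated upon integration since compactness of $M$ makes the fibers closed) and a bulk term precisely proportional to $P_{\mH}^0(\delta_0 h_0)$, controlling in particular the terms in which $P_{\mH}^0\nabla^0$ hits the horizontal lifts $\pi^*W_i$ and checking that they reassemble into $\delta_0 h_0$.
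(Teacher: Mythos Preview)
Your argument is correct and follows essentially the same route as the paper: compute $\dt\|h_t\|^2$ via \eqref{dtPhdtnablaXYforVi}, recognize it as a $\langle S_i,\delta_0^*V_i^\flat\rangle_{\mV}$ pairing, integrate by parts along the (closed) fibers, and identify the result with $P_{\mH}^0(\delta_0 h_0)$. The only notable packaging differences are that the paper uses an explicit partition of unity on $N$ and isolates an intermediate fiberwise criterion (equation \eqref{Jhcrit1fiber}) before dualizing, whereas you recombine the local data back into the frame-independent bundle pairing $\sum_a g_0\big(B_0^\sharp E_a,\,P_{\mH}^0(\delta_0 h_0)(E_a)\big)$ and invoke the fundamental lemma directly; also, the ``main obstacle'' you flag --- the terms where $\nabla^0_{E_a}$ falls on $\pi^*W_i$ --- is resolved in the paper exactly by the vanishing of $\tilde h_0$ for a Riemannian submersion, which is the one ingredient you should make explicit when you carry the computation out.
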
 
\begin{proof} 
Let $\{ g_t, t \in (-\epsilon, \epsilon) \} \subset \Riem(M, \mV, g_0)$ be a one-parameter family of Riemannian metrics on $M$ such that $\pi : (M, g_t) \rightarrow (N, g_N)$ is a Riemannian submersion for all $t \in (-\epsilon, \epsilon)$. 

Let $\{ U_\alpha , \alpha \in \{1 ,\ldots, \Lambda\} \}$ be a finite open cover of $N$,
such that on every $U_\alpha$ there exists a 
frame of linearly independent vector fields $\{W_{n+1}^\alpha, \ldots, W_{n+p}^\alpha \}$ and on $\pi^{-1}(U_\alpha)$ we have $\vol_{g_0} = \pi^* (\vol_{g_N}(x)) \otimes \vol_{\mF_x, g_0}$ for all $x \in U_\alpha$ \cite{Lang}, where $\vol_{\mF_x, g_0}$ is the volume form of $\mF_x$ induced from $(M, g_0)$. 
Then, by Remark \ref{remVit}, there exist vertical vector fields $\{V_{n+1}^\alpha (t), \ldots, V_{n+p}^\alpha (t) \}$ on $\pi^{-1}(U_\alpha)$ such that
\[
B_t^\sharp X = \sum\nolimits_{i=n+1}^{n+p} g_0(V_i^\alpha (t) , X) P_{\mH}^t  \pi^* W_i^\alpha
\]
for all vertical vector fields $X$ on $\pi^{-1}(U_\alpha)$.
Let $z \in \pi^{-1}(U_\alpha)$, then 
for every set of local $g_0$-orthonormal vector fields $\{E_a , a=1, \ldots, n\} \subset \mathfrak{X}_{\mV}$ in a neighbourhood of $z$
we have, by \eqref{dtPhdtnablaXYforVi}, at the point $z$:
\begin{eqnarray} \label{dtht}
&& \dt \|h_t\|^2 = B_t(h_t(E_a, E_b) , h_t(E_a, E_b)) + 2g_t(\dt h_t(E_a, E_b) , h_t(E_a, E_b)) \nonumber \\
&&= 2g_t( (\dt \nabla^t)_{E_a} E_b , h_t(E_a, E_b) ) = 2g_t( P_{\mH}^t (\dt \nabla^t)_{E_a} E_b , h_t(E_a, E_b) ) \nonumber \\
&&= \sum\nolimits_{i=n+1}^{n+p} \big( g_0(\nabla^0_{E_a} V^\alpha_i (t) , E_b) + g_0(\nabla^0_{E_b} V^\alpha_i (t) , E_a) \big) g_t( P_{\mH}^t  \pi^* W_i^\alpha , h_t(E_a, E_b)) \nonumber \\
&&= \sum\nolimits_{i=n+1}^{n+p}  \< \mathcal{L}_{V^\alpha_i (t)} g_0 , h^{i,\alpha}_t \>_{\mV} ,
\end{eqnarray}
where $h_t^{i,\alpha}(X,Y) = g_t( h_t(X, Y) , P_{\mH}^t  \pi^* W_i^\alpha )$ for all vertical vector fields $X,Y$ on $\pi^{-1}(U_\alpha)$. 
Let $\{ f_\alpha \}$ be a partition of unity 
on $N$, subordinate to $\{ U_\alpha \}$, then \cite{Lang} 
\begin{eqnarray} \label{dtinth2partition}
&& \dt \int_{M} \| h_t \|^2 \, \vol_{g_t} \vert_{t=0} = \int_{M} \dt \| h_t \|^2 \, \vol_{g_0} \vert_{t=0} \nonumber \\
&&=  \sum\nolimits_{\alpha=1}^\Lambda  \int_{\pi^{-1}(U_\alpha)} f_\alpha  \dt \| h_t \|^2 \, \vol_{g_0} \vert_{t=0} \nonumber\\ 
&&= \sum\nolimits_{\alpha=1}^\Lambda \sum\nolimits_{i=n+1}^{n+p} \int_{\pi^{-1}(U_\alpha)} f_\alpha \< \mathcal{L}_{V_i^\alpha (0)} g_0 , h^{i, \alpha}_0\>_{\mV} \, \vol_{g_0} \nonumber\\
&& = \sum\nolimits_{\alpha=1}^\Lambda \sum\nolimits_{i=n+1}^{n+p} \int_{U_\alpha} \big( \int_{\mF_x} \< \mathcal{L}_{V_i^\alpha (0)} g_0 , h^{i,\alpha}_0\>_{\mV}  \, \vol_{\mF_x, g_0} \big) f_\alpha 
\vol_{g_N}(x) .
\end{eqnarray}

We will prove that $\dt J_h (g_t) \vert_{t=0} =0$ for all variations $\{ g_t, t \in (-\epsilon, \epsilon) \} \subset \Riem(M, \mV, g_0)$ preserving the Riemannian submersion $\pi$ if and only if for all $x \in N$, all $V \in \mathfrak{X}_{\mF_x}$ and all $W \in T_x N$ we have
\begin{equation} \label{Jhcrit1fiber}
\int_{\mF_x} \< \mathcal{L}_{V} g_0 , h^W_0\>_{\mV} \, \vol_{\mF_x , g_0} =0 ,
\end{equation}
where $h^W_t(X,Y) = g_t( h_t(X , Y) ,  P_{\mH}^t \pi^* W )$ for all $X,Y \in \mathfrak{X}_{\mF_x}$, where $\pi^* W$ is the $g_0$-horizontal lift of $W$.

The sufficiency of \eqref{Jhcrit1fiber} follows from \eqref{dtinth2partition}.
For the necessity, 
if \eqref{Jhcrit1fiber} does not hold for some $W \in T_x$ and $V \in \mathfrak{X}_{\mF_x}$,
then (since a Riemannian submersion from a compact manifold is a fiber bundle \cite{Ehresmann}) there exists an open set $U \subset N$ with $x \in U$, such that we can extend $V$ to a vertical field $V_{n+1}$ on $\pi^{-1}(U)$ and 
there exists a linearly independent frame $\{W_{n+1} , \ldots, W_{n+p}\}$ with $W_{n+1}=W$ on $U$ and 
\begin{equation*} 
\int_{\pi^{-1}(U)} \< \mathcal{L}_{V} g_0 , h^W_0\>_{\mV} \, \vol_{g_0} \neq 0.
\end{equation*}
We then set $V_{n+2} = \ldots = V_{n+p}=0$ and define a variation ${\tilde g}_t$ on $\pi^{-1}(U)$ as in Theorem \ref{corbsharpglobal}. Using a non-zero, compactly supported in $U$ function $\rho$, such that $0 \leq \rho \leq 1$ we can define a variation $g_t$ on $M$ by $\dt g_t = \rho \cdot \dt {\tilde g}_t$, for which $\dt J_h (g_t) \vert_{t=0} \neq 0$.

Let $(V^\flat)(X) = g_0(V, X)$ for all $V,X \in \mathfrak{X}_{\mF_x}$, then we can write \eqref{Jhcrit1fiber} in the following form (see the last paragraph in Section \ref{secnotation}):
\[
0 = \int_{\mF_x} \< 2 \delta_0^* (V^\flat) , h^W_0\>_{\mV} \, \vol_{\mF_x , g_0} = 2 \int_{\mF_x} \< V^\flat , \delta_0 h^W_0\>_{\mV} \, \vol_{\mF_x , g_0}
\]
and since the above must hold for all $V \in \mathfrak{X}_{\mF_x}$, we obtain $\delta_0 h^W_0 =0$ for all $W \in T_x M$ as the necessary and sufficient condition for $g_0$ to be a critical point of \eqref{Jh2def} with respect to variations preserving the Riemannian submersion $\pi$. 

Let $W$ be an extension of $W \in T_x N$ to $U$, 
let $\{ E_a , a=1,\ldots,n \}$ be a local vertical $g_0$-orthonormal frame. Using $P_{\mH}^0 [E_a, \pi^* W] =0$, we obtain for all vertical vector fields $X$ on $\pi^{-1}(U)$:
\begin{eqnarray*}
(\delta_0 h_0^W) (X) &=& \sum\nolimits_{a=1}^n ( g_0 ( (\nabla^0_{E_a} h_0)(E_a, X) ,  \pi^* W) + g_0(h_0(E_a, X) ,\nabla_{E_a} \pi^* W ) ) \\
&=& g_0( (\delta_0 h_0)(X) ,  \pi^* W) +  \sum\nolimits_{a=1}^n g_0(h_0(E_a, X) ,\nabla_{\pi^* W} E_a ) \\
&=& g_0( (\delta_0 h_0)(X) ,  \pi^* W) + \sum\nolimits_{a=1}^n g_0( h_0(E_a, X) ,\nabla_{\pi^* W} E_a ) \\
&=& g_0( (\delta_0 h_0)(X) ,  \pi^* W) - \sum\nolimits_{a=1}^n g_0( {\tilde h}_0( \pi^* W , h_0(E_a, X) ) , E_a ) \\
&=& g_0( (\delta_0 h_0)(X) ,  \pi^* W).
\end{eqnarray*}
Hence, $(\delta_0 h_0^W)=0$ for all $W \in \mathfrak{X}_N$ if and only if $P_{\mH}^0 (\delta_0 h_0) =0$.
\end{proof} 

\begin{remark} 
We note that for all $X \in \mathfrak{X}_{\mV}$ 
and for a local $g_0$-orthonormal adapted frame $\{E_a, \ee_i\}$, 
we can write the Codazzi formula \cite{Tondeur} in the following form: 
\[
P_{\mH}^0 (\delta_0 h_0)(X) = P_{\mH}^0 \sum\nolimits_{a=1}^n R(E_a , X) E_a + P_{\mH}^0 \nabla_X H_0 ,
\]
where $R$ is the curvature tensor of $g_0$. 
\end{remark} 

\begin{corollary} \label{corcmc}
Let $\pi : (M, g_0) \rightarrow (N, g_N)$ be a Riemannian submersion, 
where $M$ is compact. If the fibers of $\pi$ are totally umbilical on $(M,g_0)$, 
then $g_0$ is critical for 
\eqref{Jh2def}, with respect to variations preserving the Riemannian submersion $\pi$, if and only if all fibers of $\pi$ are of parallel mean curvature on $(M, g_0)$, i.e., $P_{\mH}^0 \nabla^0_X H_0=0$ for all $X \in \mathfrak{X}_{\mV}$.
\end{corollary}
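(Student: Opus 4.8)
The plan is to deduce Corollary \ref{corcmc} directly from Theorem \ref{thcrith}, whose conclusion is that $g_0$ is critical for \eqref{Jh2def} if and only if $P_{\mH}^0(\delta_0 h_0)=0$. So the task reduces to showing that, under the totally umbilical assumption $h_0(X,Y)=\frac{1}{n}g_0(X,Y)H_0$ for all $X,Y\in\mathfrak{X}_{\mV}$, the condition $P_{\mH}^0(\delta_0 h_0)=0$ is equivalent to $P_{\mH}^0\nabla^0_X H_0=0$ for all $X\in\mathfrak{X}_{\mV}$. First I would substitute the umbilicity relation into the definition of $\delta_0 h_0$: for a local vertical $g_0$-orthonormal frame $\{E_a\}$ and any $X\in\mathfrak{X}_{\mV}$,
\[
(\delta_0 h_0)(X)=\sum\nolimits_{a=1}^n(\nabla^0_{E_a}h_0)(E_a,X)=\frac{1}{n}\sum\nolimits_{a=1}^n\nabla^0_{E_a}\big(g_0(E_a,X)H_0\big)-\frac{1}{n}\sum\nolimits_{a=1}^n\big(g_0(\nabla^0_{E_a}E_a,X)+g_0(E_a,\nabla^0_{E_a}X)\big)H_0 .
\]
Here one must be careful that $h_0$ is a $\mathcal{H}(0)$-valued tensor, so $(\nabla^0_{E_a}h_0)(E_a,X)$ is computed by the usual rule $\nabla^0_{E_a}(h_0(E_a,X))-h_0(\nabla^0_{E_a}E_a,X)-h_0(E_a,\nabla^0_{E_a}X)$, and then one projects; I would carry this out and let the $g_0(E_a,X)$-weighted sum collapse via $\sum_a g_0(E_a,X)\nabla^0_{E_a}=\nabla^0_X$ when acting on $X$ restricted to the fiber.

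The key computation is that the "extra" terms — those involving $\nabla^0_{E_a}E_a$ and $\nabla^0_{E_a}X$ — either cancel against each other or have no $\mathcal{H}(0)$-component. Concretely, after projecting with $P_{\mH}^0$, the terms $h_0(\nabla^0_{E_a}E_a,X)$ and $h_0(E_a,\nabla^0_{E_a}X)$ only see the vertical parts of $\nabla^0_{E_a}E_a$ and $\nabla^0_{E_a}X$ (since $h_0$ vanishes on horizontal arguments), and by umbilicity $h_0(P_{\mV}^0\nabla^0_{E_a}E_a,X)=\frac{1}{n}g_0(P_{\mV}^0\nabla^0_{E_a}E_a,X)H_0$, etc. One then checks that the combination $\sum_a\big(g_0(P_{\mV}^0\nabla^0_{E_a}E_a,X)+g_0(E_a,P_{\mV}^0\nabla^0_{E_a}X)\big)$ equals $\sum_a g_0(\nabla^0_{E_a}E_a,X)+\sum_a g_0(E_a,\nabla^0_{E_a}X)=\sum_a E_a(g_0(E_a,X))=\sum_a E_a(\delta_{aX\text{-component}})$ — more precisely, using $\sum_a g_0(E_a,\nabla^0_{E_a}X)=\Div^0_{\mV}X$ and the frame identity, these collect into a term proportional to $H_0$ that is matched by the corresponding piece coming from $\nabla^0_{E_a}(g_0(E_a,X)H_0)$. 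After the dust settles, I expect
\[
P_{\mH}^0(\delta_0 h_0)(X)=\tfrac{1}{n}P_{\mH}^0\nabla^0_X H_0
\]
for all $X\in\mathfrak{X}_{\mV}$, possibly up to terms that vanish because $\tilde h_0=0$ (as $\pi$ is a Riemannian submersion for $g_0$, so the horizontal distribution's second fundamental form vanishes) — the same mechanism used in the last display of the proof of Theorem \ref{thcrith}, where $g_0(\nabla_{\pi^*W}E_a,\cdot)$ terms were discarded via $\tilde h_0=0$.

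Given that identity, the equivalence is immediate: $P_{\mH}^0(\delta_0 h_0)=0$ iff $P_{\mH}^0\nabla^0_X H_0=0$ for all $X\in\mathfrak{X}_{\mV}$, which is exactly the statement that the fibers have parallel mean curvature. Combining with Theorem \ref{thcrith} finishes the proof. The main obstacle I anticipate is bookkeeping: correctly handling that $h_0$ takes values in $\mathcal{H}(0)$ (so covariant derivatives of $h_0(E_a,X)$ need the full ambient connection, not a fiber connection) and making sure the vertical-divergence and frame-derivative terms that are not manifestly $H_0$-proportional actually do cancel or lie in $\mathcal{V}$; a clean way to organize this is to fix a point $z\in M$ and choose $\{E_a\}$ so that $P_{\mV}^0\nabla^0_{E_a}E_b=0$ at $z$ (a geodesic-type frame on the fiber through $z$), which kills the $\nabla^0_{E_a}E_a$ terms at $z$ and reduces the computation to $\frac1n P_{\mH}^0\sum_a\nabla^0_{E_a}(g_0(E_a,X)H_0)-\frac1n P_{\mH}^0\sum_a g_0(E_a,\nabla^0_{E_a}X)H_0$, where one then identifies $\sum_a g_0(E_a,\nabla^0_{E_a}X)$ with $\Div^0_{\mV}X$ and $\sum_a E_a(g_0(E_a,X))$ accordingly, leaving only $\frac1n P_{\mH}^0\nabla^0_X H_0$ since $\nabla^0_X$ acts as $\sum_a g_0(E_a,X)\nabla^0_{E_a}$ on the relevant piece. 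I would also remark that this recovers the Codazzi-form identity in the preceding remark, specialized to the umbilical case, as a consistency check.
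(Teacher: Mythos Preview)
Your approach is correct and is exactly the paper's: reduce to Theorem \ref{thcrith} and verify the identity $P_{\mH}^0(\delta_0 h_0)(X)=\tfrac{1}{n}P_{\mH}^0\nabla^0_X H_0$ under umbilicity. You are overcomplicating the cancellation, though: since $X$ and $E_a$ are vertical, metric compatibility gives $E_a(g_0(E_a,X))=g_0(\nabla^0_{E_a}E_a,X)+g_0(E_a,\nabla^0_{E_a}X)$ directly, so the bracketed term vanishes identically and one is left with $\sum_a \tfrac{1}{n}g_0(E_a,X)\nabla^0_{E_a}H_0=\tfrac{1}{n}\nabla^0_X H_0$ --- no geodesic frame, no appeal to $\tilde h_0=0$, and no divergence bookkeeping is needed.
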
 
\begin{proof}
For totally umbilical fibers 
$P_{\mH}^0 (\delta_0 h_0)(X) = \frac{1}{n} P_{\mH}^0 \nabla^0_X H_0$ for all $X \in \mathfrak{X}_{\mV}$.
\end{proof} 

\begin{theorem} \label{thcritnormH}
Let $\pi : (M, g_0) \rightarrow (N, g_N)$ be a Riemannian submersion, 
where $M$ is compact.
Then $g_0$ is critical for 
\eqref{JHdef}, with respect to variations preserving the Riemannian submersion $\pi$, if and only if $H_0$ is 
projectable.
\end{theorem}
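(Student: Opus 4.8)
The plan is to follow, almost verbatim, the scheme of the proof of Theorem~\ref{thcrith}, with $\|h_t\|^2$ replaced by $g_t(H_t,H_t)$ and with \eqref{P2dtHtlifts} used in place of \eqref{dtPhdtnablaXYforVi}. First I would fix a finite open cover $\{U_\alpha\}$ of $N$ with local frames $\{W_{n+1}^\alpha,\dots,W_{n+p}^\alpha\}$ of linearly independent vector fields on each $U_\alpha$, and on $\pi^{-1}(U_\alpha)$ write $B_t^\sharp$ in the form \eqref{bsharpinv} with vertical fields $V_i^\alpha(t)$, as permitted by Remark~\ref{remVit}. Since $H_t$ is $g_t$-horizontal we have $P_{\mV}^t H_t=0$, so \eqref{BXYfields} gives $B_t(H_t,H_t)=0$; hence $\dt g_t(H_t,H_t)=2\,g_t(\dt H_t,H_t)=2\,g_t(P_{\mH}^t\dt H_t,H_t)$, and \eqref{P2dtHtlifts} turns this, on $\pi^{-1}(U_\alpha)$, into $2\sum_i(\Div^0_{\mV}V_i^\alpha(t))\,g_t(P_{\mH}^t\pi^*W_i^\alpha,H_t)$. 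Using that variations preserving $\pi$ preserve $\vol_{g_0}$, a partition of unity $\{f_\alpha\}$ subordinate to $\{U_\alpha\}$, and Fubini's theorem exactly as in the passage leading to \eqref{dtinth2partition}, I would reduce $\dt J_H(g_t)\vert_{t=0}$ to a sum of iterated integrals whose inner part is $\int_{\mF_x}(\Div^0_{\mV}V)\,g_0(H_0,\pi^*W)\,\vol_{\mF_x,g_0}$ for vertical $V$ and $W\in T_xN$.

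The key step is then integration by parts on the closed fiber $\mF_x$ (compact since $M$ is): for any smooth function $\phi$ and vector field $V$ on $\mF_x$ one has $\int_{\mF_x}(\Div^0_{\mV}V)\,\phi\,\vol_{\mF_x,g_0}=-\int_{\mF_x}V(\phi)\,\vol_{\mF_x,g_0}$. Applying this with $\phi=g_0(H_0,\pi^*W)$, and arguing as in Theorem~\ref{thcrith} — using that a Riemannian submersion from a compact manifold is a fiber bundle to extend an arbitrary test field $V\in\mathfrak{X}_{\mF_x}$ to a vertical field on some $\pi^{-1}(U_\alpha)$, setting the remaining $V_j$ to zero, applying Theorem~\ref{corbsharpglobal} and a cut-off $\rho$ to produce a genuine variation of $g_0$ on $M$ preserving $\pi$ — I obtain that $g_0$ is critical for \eqref{JHdef} if and only if, for every $x\in N$, every $V\in\mathfrak{X}_{\mF_x}$ and every $W\in T_xN$, $\int_{\mF_x}V\big(g_0(H_0,\pi^*W)\big)\,\vol_{\mF_x,g_0}=0$, i.e. (localizing $V$) if and only if $d\big(g_0(H_0,\pi^*W)\vert_{\mF_x}\big)=0$, i.e. $g_0(H_0,\pi^*W)$ is constant along each fiber for every $W\in\mathfrak{X}_N$.

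It remains to identify this with projectability of $H_0$. Since $H_0$ is $g_0$-horizontal, in a local basic frame $\{\pi^*W_{n+1},\dots,\pi^*W_{n+p}\}$ we may write $H_0=\sum_i c_i\,\pi^*W_i$, and then $g_0(H_0,\pi^*W_j)=\sum_i c_i\,(g_N(W_i,W_j)\circ\pi)$; the Gram matrix $(g_N(W_i,W_j)\circ\pi)$ is invertible and constant along fibers, so the functions $c_i$ are constant along fibers if and only if all the $g_0(H_0,\pi^*W_j)$ are. Thus the criticality condition is equivalent to $H_0$ having basic components in every local basic frame, i.e. $H_0=\pi^*\tilde H$ for some $\tilde H\in\mathfrak{X}_N$, which is precisely the statement that $H_0$ is projectable. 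The main obstacle I anticipate is the necessity direction: one must verify carefully that the local, cut-off construction produces a variation on all of $M$ that still lies in $\Riem(M,\mV,g_0)$, preserves $\pi$, and has nonzero first variation — this is handled exactly as in Theorem~\ref{thcrith}, the point being that multiplying the generating vertical fields by $\rho\circ\pi$ leaves \eqref{bsharpinv} and the divergence computation \eqref{P2dtHtlifts} formally unchanged, with $\Div^0_{\mV}\big((\rho\circ\pi)V_i\big)=(\rho\circ\pi)\,\Div^0_{\mV}V_i$ along each fiber.
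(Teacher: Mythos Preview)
Your proposal is correct and follows essentially the same route as the paper's proof: both reduce, via \eqref{P2dtHtlifts} and a partition-of-unity argument as in Theorem~\ref{thcrith}, to the fiberwise condition $\int_{\mF_x} g_0(H_0,\pi^*W)\,(\Div^0_{\mV}V)\,\vol_{\mF_x,g_0}=0$, integrate by parts to obtain that $g_0(H_0,\pi^*W)$ is constant along each fiber, and then identify this with projectability of the horizontal field $H_0$. The only cosmetic differences are that the paper phrases the integration by parts via ${\rm d}^*$ and proves the projectability equivalence by computing $P_{\mH}^0[V,H_0]$ directly, whereas you invoke the divergence theorem and a Gram-matrix argument in a local basic frame; these are equivalent formulations of the same two steps.
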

\begin{proof}
Similarly as in the proof of Theorem \ref{thcrith}, we obtain that $g_0$ is critical if and only if for all $x \in N$, 
$V \in \mathfrak{X}_{\mF_x}$, 
$W \in T_x N$ we have
\begin{equation} \label{H0critfibers}
\int_{\mF_x}  g_0(H_0 , 
\pi^* W ) \cdot (\Div^0_{\mV} V) \; \vol_{\mF_x , g_0 } =0.
\end{equation}
Using $(V^\flat)(X) = g_0(V, X)$ for all $V,X \in \mathfrak{X}_{\mF_x}$, the exterior differential ${\rm d}$ and its formal adjoint ${\rm d}^*$ on $\mF_x$, we can write \eqref{H0critfibers} as
\[
0 = \int_{\mF_x}  g_0(H_0 , 
\pi^* W ) \cdot  {\rm d}^* (V^\flat) \; \vol_{\mF_x , g_0 } =  -\int_{\mF_x} ( {\rm d} g_0(H_0 , 
\pi^* W ) )(V) \; \vol_{\mF_x , g_0 }.
\]
It follows that $g_0$ is critical for \eqref{JHdef} with respect to variations preserving the Riemannian submersion $\pi$ if and only if $P_{\mV}^0 \nabla^0 g_0(H_0 , \pi^* W )=0$ 
for all $W \in \mathfrak{X}_N$. Writing $H_0 = \sum\nolimits_{i=n+1}^{n+p} g_0( H_0 , \pi^* W_i) \pi^* W_i$ for a local $g_N$-orthonormal basis $\{W_{n+1} , \ldots,  W_{n+p}\}$, we find that this condition is equivalent to $H_0$ being 
projectable, as for all $V \in \mathfrak{X}_{\mV}$
\begin{eqnarray*}
P_{\mH}^0 [V , H_0] &=& \sum\nolimits_{i=n+1}^{n+p} V( g_0( H_0 , \pi^* W_i) ) \pi^* W_i + \sum\nolimits_{i=n+1}^{n+p} g_0( H_0 , \pi^* W_i) P_{\mH}^0 [V , \pi^* W_i ] \\
&=& \sum\nolimits_{i=n+1}^{n+p} ( P_{\mV}^0 \nabla^0 g_0( H_0 , \pi^* W_i) , V ) \pi^* W_i.
\end{eqnarray*}
\end{proof}

\begin{remark}
We note that for a fixed $x \in N$, we can define functionals 
by \eqref{Jh2def} and \eqref{JHdef}, with $M$ in their definitions replaced by the fiber $\mF_x$, i.e., 
\begin{eqnarray}
\label{Jh2defx}
J_{h,x}(g_t) &=& \int_{\mF_x} \| h_t \|^2 \,  \vol_{\mF_x , g_t } , \\ 
\label{JHdefx}
J_{H,x}(g_t) &=& \int_{\mF_x} g_t(H_t,H_t) \,  \vol_{\mF_x , g_t } .
\end{eqnarray}
The critical metrics for \eqref{Jh2defx} and \eqref{JHdefx} satisfy the same conditions as those given in Theorems \ref{thcrith} and \ref{thcritnormH}, only on the fiber $\mF_x$ rather than everywhere on $M$.
\end{remark}

\subsection{Second variation} \label{secsecondvar}
\begin{lemma} 
Let $\{ g_t, t \in (-\epsilon, \epsilon) \} \subset \Riem(M, \mV, g_0)$ be a 
variation such that $\pi : (M, g_t) \rightarrow (N, g_N)$ is a Riemannian submersion for all $t \in (-\epsilon, \epsilon)$ and \eqref{bsharpinv} holds, where $\{ W_{n+1}, \ldots , W_{n+p} \}$ are $g_N$-orthonormal. Then for all $t \in (-\epsilon, \epsilon)$ the vector fields $\{ P_{\mH}^t \pi^* W_{n+1} , \ldots , P_{\mH}^t \pi^* W_{n+p} \}$ are $g_t$-orthonormal and $g_t$-horizontal, 
and
\begin{equation} \label{dtPHWi}
\dt P_{\mH}^t \pi^* W_i = -V_i 
\end{equation}
for all $i \in \{ n+1 , \ldots, n+p\}$.
\end{lemma}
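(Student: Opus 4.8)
The plan is to handle the two assertions in order, proving the orthonormality first since it will feed into the computation of the derivative. For $g_t$-horizontality there is nothing to do: $P_{\mH}^t \pi^* W_i$ lies in $\mH(t)$ by the very definition of the projection $P_{\mH}^t$. For $g_t$-orthonormality I would observe that $\pi_* P_{\mH}^t \pi^* W_i = \pi_*(\pi^* W_i) - \pi_*(P_{\mV}^t \pi^* W_i) = W_i$, since the image of $P_{\mV}^t$ is $\mV = \ker \pi_*$; then, because $\pi : (M, g_t) \rightarrow (N, g_N)$ is a Riemannian submersion, $\pi_*$ is a $g_t$-isometry on $\mH(t)$, so $g_t(P_{\mH}^t \pi^* W_i, P_{\mH}^t \pi^* W_j) = g_N(W_i, W_j) = \delta_{ij}$.

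For \eqref{dtPHWi} I would write $P_{\mH}^t \pi^* W_i = \pi^* W_i - P_{\mV}^t \pi^* W_i$ and note that $\pi^* W_i$, being the $g_0$-horizontal lift of $W_i$, is independent of $t$, so $\dt P_{\mH}^t \pi^* W_i = -\dt P_{\mV}^t \pi^* W_i$. Next I would fix a local $g_0$-orthonormal vertical frame $\{E_1, \ldots, E_n\}$, which is also $g_t$-orthonormal because $g_t \in \Riem(M, \mV, g_0)$, and expand $P_{\mV}^t \pi^* W_i = \sum_{a=1}^n g_t(\pi^* W_i, E_a) E_a$, whence $\dt P_{\mV}^t \pi^* W_i = \sum_{a=1}^n B_t(\pi^* W_i, E_a) E_a$. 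Using the symmetry of $B_t$ together with \eqref{bsharpinv} applied to the vertical vector field $E_a$, I get $B_t(\pi^* W_i, E_a) = g_t(B_t^\sharp E_a, \pi^* W_i) = \sum_{j=n+1}^{n+p} g_0(V_j, E_a)\, g_t(P_{\mH}^t \pi^* W_j, \pi^* W_i)$. The key move is then to invoke the orthonormality established above, along with $g_t(P_{\mH}^t \pi^* W_j, P_{\mV}^t \pi^* W_i) = 0$, to collapse $g_t(P_{\mH}^t \pi^* W_j, \pi^* W_i)$ to $\delta_{ij}$, so that $B_t(\pi^* W_i, E_a) = g_0(V_i, E_a)$ and therefore $\dt P_{\mV}^t \pi^* W_i = \sum_{a=1}^n g_0(V_i, E_a) E_a = V_i$, the last equality because $V_i$ is vertical. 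Combining, $\dt P_{\mH}^t \pi^* W_i = -V_i$.

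I do not anticipate a genuine obstacle here; the computation is short once \eqref{bsharpinv} and the Riemannian submersion property of each $g_t$ are in hand. The only step requiring a little care is the reuse of the orthonormality of $\{P_{\mH}^t \pi^* W_j\}$ inside the derivative computation, which is exactly what makes the otherwise $t$-dependent pairing $g_t(P_{\mH}^t \pi^* W_j, \pi^* W_i)$ reduce to the constant $\delta_{ij}$.
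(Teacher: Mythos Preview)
Your proof is correct and follows essentially the same approach as the paper: both establish orthonormality via the Riemannian submersion property, then compute $\dt P_{\mV}^t \pi^* W_i$ by expanding in a vertical orthonormal frame, applying \eqref{bsharpinv}, and collapsing $g_t(P_{\mH}^t \pi^* W_j, \pi^* W_i)$ to $\delta_{ij}$ using the just-proved orthonormality. The only cosmetic difference is that the paper first records the general formula $\dt P_{\mV}^t X = \sum_{i} g_t(P_{\mH}^t \pi^* W_i, X) V_i + P_{\mV}^t(\dt X)$ for arbitrary $X$ (since this is reused later) and then specializes, whereas you go directly to $X = \pi^* W_i$.
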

\begin{proof}
The first statement follows from 
\begin{equation} \label{gNWiWj}
g_t(P_{\mH}^t \pi^* W_i , P_{\mH}^t \pi^* W_j ) = g_N(W_i, W_j) = \delta_{ij}.
\end{equation}
To prove the last one, we have for all $X \in \mathfrak{X}_M$:
\begin{eqnarray} \label{dtPVX}
\dt P_{\mV}^t X &=& \dt \sum\nolimits_{a=1}^{n}  g_t(X , E_a) E_a = \sum\nolimits_{a=1}^{n} \big( B_t(X , E_a) E_a + g_t(\dt X , E_a) E_a \big) \nonumber \\
&=& \sum\nolimits_{a=1}^{n} g_t( B_t^\sharp E_a , X) E_a + P_{\mV}^t (\dt X) \nonumber\\
&=& \sum\nolimits_{a=1}^{n} \sum\nolimits_{i=n+1}^{n+p} g_0(V_i , E_a) g_t( P_{\mH}^t \pi^* W_i , X) E_a + P_{\mV}^t (\dt X)  \nonumber\\
&=& \sum\nolimits_{i=n+1}^{n+p} g_t( P_{\mH}^t \pi^* W_i , X) V_i + P_{\mV}^t (\dt X) .
\end{eqnarray}
In particular,
\begin{eqnarray*}
\dt P_{\mH}^t \pi^* W_i &=& - \dt P_{\mV}^t \pi^* W_i
= -\sum\nolimits_{j=n+1}^{n+p} g_t( P_{\mH}^t \pi^* W_i , \pi^* W_j) V_j \\
&=& - \sum\nolimits_{i=n+1}^{n+p} g_N( W_i , W_j)  V_j = -V_i.
\end{eqnarray*}
\end{proof}

\begin{theorem} \label{thd2tHgeq0}
Let $\pi : (M, g_0) \rightarrow (N, g_N)$ be a Riemannian submersion, where $M$ is compact. Let $g_0$ be a critical point of \eqref{Jh2def}, $($respectively, \eqref{JHdef}$)$, 
with respect to variations 
preserving the Riemannian submersion $\pi$. Then for all such variations we have $\ddt J_{h}(g_t) \geq 0$ $($respectively, $\ddt J_{H}(g_t) \geq 0$ $)$.
\end{theorem}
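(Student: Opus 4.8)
The plan is to differentiate once more the first-variation identities obtained in the proofs of Theorems \ref{thcrith} and \ref{thcritnormH} and to exhibit the second variation at a critical point as a sum of a pointwise non-negative term and a term that vanishes by exactly the fibrewise argument which characterised criticality; concretely I will show $\ddt J_h(g_t)\vert_{t=0}\geq 0$, respectively $\ddt J_H(g_t)\vert_{t=0}\geq 0$. First I would fix a variation $\{g_t\}\subset\Riem(M,\mV,g_0)$ preserving $\pi$ and, as in the proof of Theorem \ref{thcrith}, a finite open cover $\{U_\alpha\}$ of $N$, each $U_\alpha$ carrying a $g_N$-orthonormal frame $\{W^\alpha_{n+1},\dots,W^\alpha_{n+p}\}$ and the local product splitting of $\vol_{g_0}$. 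By Remark \ref{remVit} the variation satisfies \eqref{bsharpinv} on $\pi^{-1}(U_\alpha)$ with $t$-dependent vertical fields $V^\alpha_i(t)$; then $\{P_{\mH}^t\pi^*W^\alpha_i\}$ is a $g_t$-orthonormal frame of $\mH(t)$ by \eqref{gNWiWj} and $\dt(P_{\mH}^t\pi^*W^\alpha_i)=-V^\alpha_i(t)$ by \eqref{dtPHWi}. Since variations preserving $\pi$ also preserve the volume form, i.e.\ $\vol_{g_t}=\vol_{g_0}$ along the whole variation, we have $\ddt J_h(g_t)=\int_M\ddt\|h_t\|^2\,\vol_{g_0}$ and $\ddt J_H(g_t)=\int_M\ddt g_t(H_t,H_t)\,\vol_{g_0}$, so it suffices to compute the pointwise second derivatives on each $\pi^{-1}(U_\alpha)$ and reassemble with a partition of unity $\{f_\alpha\}$ on $N$. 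I would also record two elementary facts used throughout: the operator $\dt P_{\mH}^t=-\dt P_{\mV}^t$ takes values in $\mV$ (since $P_{\mV}^t=\sum_a g_t(\cdot,E_a)E_a$ for a fixed $g_0$-orthonormal vertical frame $\{E_a\}$), so that $P_{\mH}^t\dt h_t(X,Y)=P_{\mH}^t\dt\nabla^t_XY$ for all $X,Y\in\mathfrak{X}_{\mV}$; and, by Theorem \ref{thBXYriem}, $B_t$ vanishes on any pair of $g_t$-horizontal vectors.

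For $J_h$, put $h^i_t(X,Y)=g_t(h_t(X,Y),P_{\mH}^t\pi^*W^\alpha_i)$, so that $\dt\|h_t\|^2=\sum_i\langle\mathcal{L}_{V^\alpha_i(t)}g_0,h^i_t\rangle_{\mV}$ as in \eqref{dtht}. The key step is the identity $\dt h^i_t=\frac12\,\mathcal{L}_{V^\alpha_i(t)}g_0$ as symmetric bilinear forms on $\mathfrak{X}_{\mV}$: in $\dt\big(g_t(h_t(X,Y),P_{\mH}^t\pi^*W^\alpha_i)\big)$ the $B_t$-term drops out (two horizontal arguments), the term containing $\dt(P_{\mH}^t\pi^*W^\alpha_i)=-V^\alpha_i(t)$ drops out ($h_t(X,Y)$ horizontal, $V^\alpha_i(t)$ vertical), and the surviving term $g_t(P_{\mH}^t\dt\nabla^t_XY,P_{\mH}^t\pi^*W^\alpha_i)$ equals $\frac12(\mathcal{L}_{V^\alpha_i(t)}g_0)(X,Y)$ by \eqref{dtPhdtnablaXYforVi} together with the $g_N$-orthonormality of the $W^\alpha_j$. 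Differentiating $\dt\|h_t\|^2$ once more then gives
\[
\ddt\|h_t\|^2=\sum_i\Big(\langle\mathcal{L}_{\dt V^\alpha_i(t)}g_0,h^i_t\rangle_{\mV}+\tfrac12\langle\mathcal{L}_{V^\alpha_i(t)}g_0,\mathcal{L}_{V^\alpha_i(t)}g_0\rangle_{\mV}\Big).
\]
The second summand is pointwise $\geq 0$. For the first, at $t=0$, multiplying by $f_\alpha\circ\pi$, summing over $\alpha$, integrating over $M$ and using the product structure of $\vol_{g_0}$, one obtains $\sum_{\alpha,i}\int_{U_\alpha}f_\alpha\big(\int_{\mF_x}\langle\mathcal{L}_{\dt V^\alpha_i(0)}g_0,h^i_0\rangle_{\mV}\,\vol_{\mF_x,g_0}\big)\vol_{g_N}$, which is exactly the first-variation expression \eqref{dtinth2partition} evaluated on the vertical fields $\dt V^\alpha_i(0)$ in place of $V^\alpha_i(0)$; since $g_0$ is critical, each inner integral vanishes by \eqref{Jhcrit1fiber}. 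Hence $\ddt J_h(g_t)\vert_{t=0}\geq 0$.

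For $J_H$ the argument is parallel, with $h^i_t$ replaced by $H^i_t:=g_t(H_t,P_{\mH}^t\pi^*W^\alpha_i)$ and \eqref{dtPhdtnablaXYforVi} replaced by \eqref{P2dtHtlifts}: the same cancellations give $\dt H^i_t=\Div^0_{\mV}V^\alpha_i(t)$, hence $\dt g_t(H_t,H_t)=2\sum_i H^i_t\,\Div^0_{\mV}V^\alpha_i(t)$ and
\[
\ddt g_t(H_t,H_t)=2\sum_i\Big((\Div^0_{\mV}V^\alpha_i(t))^2+H^i_t\,\Div^0_{\mV}\dt V^\alpha_i(t)\Big).
\]
The first term is non-negative; the second, at $t=0$ and assembled via $\{f_\alpha\}$, becomes the first-variation integrand governed by \eqref{H0critfibers}, which vanishes because $g_0$ is critical. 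Thus $\ddt J_H(g_t)\vert_{t=0}\geq 0$.

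The computations are routine once the structural identities $\dt h^i_t=\frac12\mathcal{L}_{V^\alpha_i(t)}g_0$ and $\dt H^i_t=\Div^0_{\mV}V^\alpha_i(t)$ are in hand, so the only point requiring genuine care is bookkeeping: one must check that \eqref{dtPhdtnablaXYforVi}, \eqref{P2dtHtlifts} and \eqref{dtPHWi} remain valid for the $t$-dependent fields $V^\alpha_i(t)$ supplied by Remark \ref{remVit} (none of their derivations used $t$-independence of the $V_i$), and that the cross term reproduces the first-variation integrand fibrewise, so that the criticality conditions \eqref{Jhcrit1fiber} and \eqref{H0critfibers} apply verbatim with $V$ replaced by the restriction $\dt V^\alpha_i(0)\vert_{\mF_x}$.
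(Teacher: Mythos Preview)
Your proposal is correct and follows essentially the same approach as the paper's proof: both derive the key structural identities $\dt h^i_t=\tfrac12\,\mathcal{L}_{V_i(t)}g_0$ and $\dt H^i_t=\Div^0_{\mV}V_i(t)$ from \eqref{dtPhdtnablaXYforVi}, \eqref{P2dtHtlifts}, \eqref{dtPHWi} and the $g_N$-orthonormality of the $W_i$, obtain the second variation as a non-negative square plus a cross term, and kill the cross term at $t=0$ via the fibrewise criticality conditions \eqref{Jhcrit1fiber}, \eqref{H0critfibers} applied with $V=\dt V_i(0)\vert_{\mF_x}$. The only organisational difference is that the paper fixes a single $x\in N$ and chooses the $W_i$ to be $g_N$-orthonormal at that point (so the computation is on $\mF_x$), whereas you take frames $g_N$-orthonormal on each $U_\alpha$ and carry the partition of unity throughout; both are equivalent and your bookkeeping remarks about $t$-dependence of the $V_i$ and the vanishing of the cross term fibrewise are exactly the points that need checking.
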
 
\begin{proof}
Let $\{ g_t, t \in (-\epsilon, \epsilon) \} \subset \Riem(M, \mV, g_0)$ be a one-parameter family of Riemannian metrics on $M$ 
that preserve Riemannian submersion $\pi$ and let $x \in N$. By Remark \ref{remVit}, we can assume that \eqref{bsharpinv} holds on $\pi^{-1}(U)$, for an open neighbourhood $U$ of $x$ in $N$, 
with 
$\{ W_{n+1}, \ldots , W_{n+p} \}$ that are $g_N$-orthonormal at $x$ and $t$-dependent $\{ V_{n+1}, \ldots , V_{n+p}  \}$.

Let $X,Y \in \mathfrak{X}_{\mF_x}$, from \eqref{dtPhdtnablaXYforVi} we obtain
\begin{equation} \label{dthtXY}
P_{\mH}^t \dt h_t(X,Y) = \frac{1}{2} \sum\nolimits_{i=n+1}^{n+p} (\mathcal{L}_{V_i(t)} g_0 )(X,Y) P_{\mH}^t \pi^* W_i .
\end{equation}
Let $h_t^{i}(X,Y) = g_t( h_t(X, Y) , P_{\mH}^t  \pi^* W_i )$, similarly as \eqref{dtht} we obtain
\[
\dt \| h_t \|^2 = \sum\nolimits_{i=n+1}^{n+p}  \< \mathcal{L}_{V_i(t)} g_0 , h^{i}_t \>_{\mV}
\]
and from \eqref{BXYfields}, \eqref{gNWiWj} and \eqref{dthtXY} it follows that
\begin{eqnarray} \label{dthiXY}
\dt h^i_t(X,Y) &=& \dt g_t(h_t(X,Y) , P_{\mH}^t \pi^* W_i) =  g_t( \dt h_t(X,Y) , P_{\mH}^t \pi^* W_i) \nonumber \\
&=& \frac{1}{2} \sum\nolimits_{j=n+1}^{n+p} (\mathcal{L}_{V_j(t)} g_0 )(X,Y)  g_t( P_{\mH}^t \pi^* W_j , P_{\mH}^t \pi^* W_i) \nonumber \\
&=& \frac{1}{2} (\mathcal{L}_{V_i(t)} g_0 )(X,Y) .
\end{eqnarray}
Hence,
\begin{eqnarray} \label{ddtnormh}
\ddt \| h_t \|^2 &=& \sum\nolimits_{i=n+1}^{n+p}  \< \mathcal{L}_{\dt V_i} g_0 , h^{i}_t \>_{\mV} + \sum\nolimits_{i=n+1}^{n+p}  \< \mathcal{L}_{V_i(t)} g_0 , \dt h^{i}_t \>_{\mV} \nonumber \\
&=& \sum\nolimits_{i=n+1}^{n+p}  \< \mathcal{L}_{\dt V_i} g_0 , h^{i}_t \>_{\mV} + \frac{1}{2} \sum\nolimits_{i=n+1}^{n+p}  \< \mathcal{L}_{V_i(t)} g_0 , \mathcal{L}_{V_i(t)} g_0  \>_{\mV}  .
\end{eqnarray}
Similarly, we obtain on $\mF_x$:
\begin{eqnarray}
P_{\mH}^t \dt H_t &=& \sum\nolimits_{i=n+1}^{n+p} (\Div^0_{\mV} V_i(t))  P_{\mH}^t \pi^* W_i  , \nonumber \\
\dt g_t(H_t, H_t) &=& 2 \sum\nolimits_{i=n+1}^{n+p} (\Div^0_{\mV} V_i(t)) g_t( P_{\mH}^t \pi^* W_i, H_t) , \nonumber \\
\label{ddtgHH}
\ddt  g_t(H_t, H_t)
&=& 2 \sum\nolimits_{i=n+1}^{n+p} (\Div^0_{\mV} \dt V_i) g_t( P_{\mH}^t \pi^* W_i, H_t) \nonumber\\
&& + 2 \sum\nolimits_{i=n+1}^{n+p} (\Div^0_{\mV} V_i(t))^2 .
\end{eqnarray}

If $g_0$ is a critical point of \eqref{Jh2def}, with respect to variations preserving the Riemannian submersion $\pi$, then \eqref{Jhcrit1fiber} holds and hence from \eqref{ddtnormh} we obtain
\begin{eqnarray} \label{ddtnormhtgeq0}
&& \ddt \int_{\mF_x} \| h_t \|^2 \, \vol_{g_t} \vert_{t=0} = \int_{\mF_x} \ddt \| h_t \|^2 \, \vol_{g_t}  \vert_{t=0} \nonumber \\
&&= \int_{\mF_x} \sum\nolimits_{i=n+1}^{n+p}  \< \mathcal{L}_{\dt V_i \vert_{t=0}} g_0 , h^{i}_0 \>_{\mV} \, \vol_{g_0} + \frac{1}{2} \int_{\mF_x} \sum\nolimits_{i=n+1}^{n+p}  \< \mathcal{L}_{V_i(0)} g_0 , \mathcal{L}_{V_i(0)} g_0  \>_{\mV}  \, \vol_{g_0} \nonumber \\
&&= \frac{1}{2} \int_{\mF_x} \sum\nolimits_{i=n+1}^{n+p}  \< \mathcal{L}_{V_i(0)} g_0 , \mathcal{L}_{V_i(0)} g_0  \>_{\mV}  \, \vol_{g_0} \geq 0.
\end{eqnarray}
Since the above holds for all $x \in N$, we have $\ddt \int_{M} \| h_t \|^2 \, \vol_{g_t} \vert_{t=0} \geq 0$, by the same method as used in \eqref{dtinth2partition}.

If $g_0$ is a critical point of \eqref{JHdef}, with respect to variations preserving the Riemannian submersion $\pi$, then \eqref{H0critfibers} holds and hence from $P_{\mH}^t \pi^* W_i \vert_{t=0} =  \pi^* W_i$ and \eqref{ddtgHH}, we obtain 
\begin{eqnarray} \label{ddtgHHgeq0}
&& \ddt \int_{\mF_x} g_t(H_t, H_t) \, \vol_{g_t} \vert_{t=0} = \int_{\mF_x} \ddt g_t(H_t, H_t) \, \vol_{g_t}  \vert_{t=0} \nonumber \\
&&= 2 \int_{\mF_x} \sum\nolimits_{i=n+1}^{n+p} g_0( 
\pi^* W_i, H_0) (\Div^0_{\mV} (\dt V_i \vert_{t=0}) ) \, \vol_{g_0} \nonumber \\
&&\quad + 2\int_{\mF_x} \sum\nolimits_{i=n+1}^{n+p} (\Div^0_{\mV} V_i(0))^2  \, \vol_{g_0} \nonumber \\
&&= 2 \int_{\mF_x} \sum\nolimits_{i=n+1}^{n+p} (\Div^0_{\mV} V_i(0))^2  \, \vol_{g_0} \geq 0.
\end{eqnarray}
Again, since \eqref{ddtgHHgeq0} holds for all $x \in N$, we have $ \ddt \int_{M} g_t(H_t, H_t) \, \vol_{g_t} \vert_{t=0} \geq 0$.
\end{proof}

\begin{theorem} \label{thd2tHgeq0case0}
Let $\pi : (M, g_0) \rightarrow (N, g_N)$ be a Riemannian submersion, where $M$ is compact. Let $x \in N$ and let $g_0$ be a critical point of \eqref{Jh2defx}, 
with respect to variations 
preserving the Riemannian submersion $\pi$.
Let  $\{ g_t, t \in (-\epsilon, \epsilon) \} \subset \Riem(M, \mV, g_0)$ be a variation that preserves the Riemannian submersion $\pi$ and let $U$ be an open subset on $N$ such that $x\in U$ and \eqref{bsharpinv} holds on $\pi^{-1}(U)$ for some $t$-dependent vector fields $V_i(t)$. 

Let $q \geq 2$. We have $\frac{\partial^r }{\partial t^r} J_{h,x} (g_t) \vert_{t=0} =0$ for all $1 \leq r \leq q$ if and only if $\{
V_i^{(s)}(0) , i=n+1, \ldots, n+p \}$ are Killing vector fields on 
$(\mF_x, g_0 \vert_{\mF_x})$ 
for all $0 \leq s \leq \lfloor \frac{q}{2} \rfloor -1$, where we use notation $V_i^{(s)}(0) = \frac{\partial^s }{\partial t^s} V_i \vert_{t=0}$ for all $s \geq 1$ and $V_i^{(0)}(0) = V_i(0)$. 
\end{theorem}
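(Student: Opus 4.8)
The plan is to expand $J_{h,x}(g_t)$ along the given variation in the frame attached to the $W_i$, use criticality of $g_0$ to annihilate every term containing an undifferentiated $h_0$, and then reduce the whole statement to an elementary combinatorial fact about the resulting quadratic expression in the fields $V_i^{(s)}(0)$.

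First I would fix the local data. By Remark \ref{remVit}, after possibly shrinking $U$ and replacing $\{W_i\}$ by another linearly independent frame (which only replaces the $V_i(t)$ by fixed linear combinations of one another, and hence does not affect whether all the $V_i^{(s)}(0)$ are Killing), I may assume that $\{W_{n+1},\dots,W_{n+p}\}$ is $g_N$-orthonormal at $x$; the fields $V_i(t)$ are then uniquely determined by $B_t^\sharp$ and the frame $\{P_{\mH}^t\pi^*W_i\}$, hence smooth in $t$, so the $V_i^{(s)}(0)$ are well defined. Along $\mF_x$ the fields $P_{\mH}^t\pi^*W_i$ form a $g_t$-orthonormal frame of $\mH(t)$ by \eqref{gNWiWj}, so setting $h_t^i(X,Y)=g_t(h_t(X,Y),P_{\mH}^t\pi^*W_i)$ we get $\|h_t\|^2=\sum_i\langle h_t^i,h_t^i\rangle_{\mV}$ on $\mF_x$; since $g_t|_{\mF_x}=g_0|_{\mF_x}$, the induced volume form on $\mF_x$ is independent of $t$, so $J_{h,x}(g_t)=\sum_i\int_{\mF_x}\langle h_t^i,h_t^i\rangle_{\mV}\,\vol_{\mF_x,g_0}$. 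Because $g_0$ is fixed and $V\mapsto\mathcal{L}_Vg_0$ is linear, \eqref{dthiXY} iterates to $\partial_t^k h_t^i=\tfrac12\mathcal{L}_{V_i^{(k-1)}(t)}g_0$ on $\mF_x$ for every $k\ge1$.

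Next I would differentiate $\langle h_t^i,h_t^i\rangle_{\mV}$ by the Leibniz rule and evaluate at $t=0$. The two extreme terms ($k\in\{0,r\}$) together contribute $\langle h_0^i,\mathcal{L}_{V_i^{(r-1)}(0)}g_0\rangle_{\mV}$, whose integral over $\mF_x$ equals $2\int_{\mF_x}\langle(V_i^{(r-1)}(0))^\flat,\delta_0 h_0^i\rangle\,\vol_{\mF_x,g_0}$ after an integration by parts on the closed fiber using $\tfrac12\mathcal{L}_Vg_0=\delta_0^*(V^\flat)$; this vanishes because $g_0$ is critical for $J_{h,x}$: the computation in the proof of Theorem \ref{thcrith}, carried out on the single fiber $\mF_x$, shows that criticality is equivalent to $\delta_0 h_0^W=0$ for all $W\in T_xN$, and the restriction of $h_0^i$ to $\mF_x$ equals $h_0^{W_i}$ and depends only on $W_i(x)$ (since $(\pi^*W)|_{\mF_x}$ does). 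The remaining terms, $1\le k\le r-1$, contribute $\tfrac14\langle\mathcal{L}_{V_i^{(k-1)}(0)}g_0,\mathcal{L}_{V_i^{(r-1-k)}(0)}g_0\rangle_{\mV}$, so, writing $s=k-1$,
\[
\partial_t^r J_{h,x}(g_t)\vert_{t=0}=\frac14\sum_{i=n+1}^{n+p}\sum_{s=0}^{r-2}\binom{r}{s+1}\int_{\mF_x}\big\langle\mathcal{L}_{V_i^{(s)}(0)}g_0,\ \mathcal{L}_{V_i^{(r-2-s)}(0)}g_0\big\rangle_{\mV}\,\vol_{\mF_x,g_0}
\]
for every $r\ge1$, the empty sum giving $0$ when $r=1$.

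It remains to read off the equivalence from this formula. Put $m=\lfloor q/2\rfloor-1$, so that $q\in\{2m+2,2m+3\}$. If every $V_i^{(s)}(0)$ with $s\le m$ is Killing, then in each summand $s+(r-2-s)=r-2$ forces $\min(s,r-2-s)\le\lfloor r/2\rfloor-1\le m$ whenever $r\le q$, so one of the two fields is Killing, the corresponding Lie derivative of $g_0$ vanishes on $\mV$, the summand is $0$, and all derivatives of $J_{h,x}$ up to order $q$ vanish. Conversely, assume $\partial_t^r J_{h,x}\vert_{t=0}=0$ for $1\le r\le q$, and argue by induction on $j=0,1,\dots,m$ that each $V_i^{(j)}(0)$ is Killing: granting it for all $j'<j$, every summand of $\partial_t^{2j+2}J_{h,x}\vert_{t=0}$ with $s\ne j$ vanishes (for $s<j$ by the inductive hypothesis, for $s>j$ because then $r-2-s<j$), leaving $\partial_t^{2j+2}J_{h,x}\vert_{t=0}=\tfrac14\binom{2j+2}{j+1}\sum_i\int_{\mF_x}\|\mathcal{L}_{V_i^{(j)}(0)}g_0\|_{\mV}^2\,\vol_{\mF_x,g_0}$, which is $0$ since $2j+2\le 2m+2\le q$; non-negativity and continuity of the integrand then force $\mathcal{L}_{V_i^{(j)}(0)}g_0=0$ on $\mV$ for every $i$, i.e.\ $V_i^{(j)}(0)$ is Killing on $(\mF_x,g_0|_{\mF_x})$. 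This argument is conceptually easy; the points requiring care are that criticality removes the $h_0$-terms for \emph{every} $r$ (not only $r=1,2$ as in the second variation), that the fiberwise criterion of Theorem \ref{thcrith} applies verbatim to $h_0^{W_i}$, and that both parities $q=2m+2$ and $q=2m+3$ are captured by the single bound $s\le m$.
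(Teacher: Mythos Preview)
Your proof is correct and follows essentially the same route as the paper's: both use \eqref{dthiXY} to iterate the $t$-derivative of $h_t^i$, kill the terms containing $h_0^i$ via the fiberwise criticality condition \eqref{Jhcrit1fiber}, and then run an induction on the even-order derivatives to peel off the Killing condition one order at a time. Your version is in fact more explicit (writing out the binomial coefficients from the Leibniz rule) and your induction is indexed more carefully---you assume Killing for $j'<j$ and look at $r=2j+2$, which correctly isolates the $\|\mathcal{L}_{V_i^{(j)}(0)}g_0\|^2$ term, whereas the paper's phrasing of the same step has a slight index slip.
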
 
\begin{proof}
We have $\frac{\partial }{\partial t} J_{h,x} (g_t) \vert_{t=0} =0$ by the assumption of $g_0$ being critical. 
For all $r \geq 2$, from differentiating \eqref{ddtnormh} and using \eqref{dthiXY}, it follows that $\frac{\partial^r }{\partial t^r} J_{h,x}(g_t) \vert_{t=0}$ is a linear combination, with positive coefficients, of term
\begin{equation} \label{termLvgh}
\int_{\mF_x} \sum\nolimits_{i=n+1}^{n+p} \< \mathcal{L}_{  
V^{(r-1)}_i(0) } g_0 , h^{i}_0 \>_{\mV} \, \vol_{g_0} ,
\end{equation} 
which vanishes by \eqref{Jhcrit1fiber}, and terms of the form 
\begin{equation} \label{termLvg}
\int_{\mF_x} \sum\nolimits_{i=n+1}^{n+p} \< \mathcal{L}_{  
V^{(\alpha)}_i(0) 
} g_0 , \mathcal{L}_{  
V^{(\beta)}_i(0)
} g_0 \>_{\mV} \, \vol_{g_0},
\end{equation}
for all $\alpha,\beta \geq 0$ such that $\alpha+\beta = r-2$. In particular, in every term \eqref{termLvg} we have either $\alpha \leq \frac{r}{2} - 1$ or $\beta \leq \frac{r}{2} - 1$, so if $\{ 
V^{(s)}_i(0), i=n+1,\ldots, n+p \}$ are Killing vector fields on 
$(\mF_x, g_0 \vert_{\mF_x})$ 
for all $0 \leq s \leq \lfloor \frac{r}{2} \rfloor -1$ then $\frac{\partial^r }{\partial t^r} J_{h,x}(g_t) \vert_{t=0}=0$.

On the other hand, suppose that $\frac{\partial^r }{\partial t^r} J_{h,x} (g_t) \vert_{t=0} =0$ for all $1\leq r \leq q$ for some $q \geq 2$. From \eqref{ddtnormhtgeq0} and $q \geq 2$, we obtain that $\{ V_i(0) , i=n+1, \ldots, n+p \}$ are Killing vector fields on $(\mF_x, g_0 \vert_{\mF_x})$. 
Suppose that 
$\{ 
V^{(\alpha)}_i(0), i= n+1, \ldots, n+p \} $ are Killing vector fields on $(\mF_x, g_0 \vert_{\mF_x})$, 
for all $0 \leq \alpha \leq s$, where $s \geq 0$. Then $\frac{\partial^{2s+2} }{\partial t^{2s+2}} J_{h,x} (g_t) \vert_{t=0}$ is a linear combination, with positive coefficients, of term \eqref{termLvgh} with $r=2s+2$, which vanishes by \eqref{Jhcrit1fiber}; terms \eqref{termLvg}, where either $\alpha \leq s$ or $\beta \leq s$, which vanish by the assumption; and term 
\[ 
\int_{\mF_x} \sum\nolimits_{i=n+1}^{n+p} \<\mathcal{L}_{  
V^{(s+1)}_i(0) } g_0 ,\mathcal{L}_{  
V^{(s+1)}_i(0) } g_0  \>_{\mV} \, \vol_{g_0} .
\]
If $\frac{\partial^{2s+2} }{\partial t^{2s+2}} J_{h,x} (g_t) \vert_{t=0}=0$, then the above term also vanishes, and hence 
$\{ 
V^{(s+1)}_i(0) , i=n+1, \ldots, n+p \}$ are Killing vector fields on $(\mF_x, g_0 \vert_{\mF_x})$. By induction on $s$, we obtain that 
$\{ 
V^{(\alpha)}_i(0), i=n+1, \ldots, n+p \}$ are Killing vector fields on $(\mF_x, g_0 \vert_{\mF_x})$ for all $\alpha \leq s$, as long as $2s+2 \leq q$, i.e., $s \leq \lfloor \frac{q}{2} \rfloor -1$.
\end{proof}

\begin{remark}
A theorem analogous to Theorem \ref{thd2tHgeq0case0} can be proved for functional \eqref{JHdefx}, with $J_{h,x}$ replaced by $J_{H,x}$ and 
words ``Killing vector fields on $(\mF_x, g_0 \vert_{\mF_x})$'' replaced by ``divergence-free vector fields on $(\mF_x, g_0 \vert_{\mF_x})$''.
\end{remark}

\section{Sectional curvature} \label{secseccurv}

\subsection{Horizontal curvature}  \label{seccurvXY}

In this section we compute variations, with respect to metrics in $\Riem(M , \mV , g_0)$ preserving a Riemannian submersion $\pi$, of sectional curvatures in directions of horizontal lifts of given vector fields on $N$. Formulas for these variations are not tensorial and hence the 
critical points are only those metrics, for which the integrability tensor of 
the horizontal distribution vanishes on the lifts. 

\begin{theorem} \label{thd2tTgeq0}
Let $\{ g_t, t \in (-\epsilon, \epsilon) \} \subset \Riem(M, \mV, g_0)$ be a 
variation such that $\pi : (M, g_t) \rightarrow (N, g_N)$ is a Riemannian submersion for all $t \in (-\epsilon, \epsilon)$ and \eqref{bsharpinv} holds, where $\{ W_{n+1}, \ldots , W_{n+p} \}$ are $g_N$-orthonormal vector fields on $N$. Let $\sec_M(X,Y)$ be the sectional curvature of the plane field 
spanned by linearly independent $X,Y \in \mathfrak{X}_M$.
Then
\begin{eqnarray} \label{dtnormTWiWjsec}
&& \dt \sec_M( P_{\mH}^t \pi^* W_i , P_{\mH}^t \pi^* W_j ) \nonumber \\
&&= -\frac{3}{2} \sum\nolimits_{k=n+1}^{n+p} g_N( [ W_i , W_j ] , W_k ) g_t( V_k ,   P_{\mV}^t [ P_{\mH}^t \pi^* W_i , P_{\mH}^t \pi^* W_j  ] ) \nonumber\\
&& \quad + \frac{3}{2}  g_t( [V_i , P_{\mH}^t \pi^* W_j  ] + [ P_{\mH}^t \pi^* W_i , V_j ] , P_{\mV}^t [ P_{\mH}^t \pi^* W_i , P_{\mH}^t \pi^* W_j  ] ) .
\end{eqnarray}
and
\begin{eqnarray} \label{d2tnormTWiWjsec}
&& \ddt \sec_M( P_{\mH}^t \pi^* W_i , P_{\mH}^t \pi^* W_j ) \nonumber \\
&&= -\frac{3}{2} \sum\nolimits_{k=n+1}^{n+p} g_N( [ W_i , W_j ] , W_k ) g_t( \dt V_k ,  P_{\mV}^t [ P_{\mH}^t \pi^* W_i , P_{\mH}^t \pi^* W_j  ] ) \nonumber \\
&&\quad - \frac{3}{2} \sum\nolimits_{k,l=n+1}^{n+p} g_N( [ W_i , W_j ] , W_k )  g_N( [ W_i , W_j ] , W_l ) g_t( V_k , V_l ) \nonumber\\
&&\quad +3 \sum\nolimits_{k=n+1}^{n+p} g_N( [ W_i , W_j ] , W_k ) g_t( V_k , [ V_i , P_{\mH}^t \pi^* W_j  ] + [ P_{\mH}^t \pi^* W_i , V_j ] ) \nonumber\\
&&\quad + \frac{3}{2} g_t( [\dt V_i , P_{\mH}^t \pi^* W_j  ] + [ P_{\mH}^t \pi^* W_i , \dt V_j ] , P_{\mV}^t [ P_{\mH}^t \pi^* W_i , P_{\mH}^t \pi^* W_j  ] )\nonumber\\
&&\quad - 3 g_t( [ V_i , V_j  ]  , P_{\mV}^t [ P_{\mH}^t \pi^* W_i , P_{\mH}^t \pi^* W_j  ] ) \nonumber\\
&&\quad - \frac{3}{2} g_t( [ V_i , P_{\mH}^t \pi^* W_j  ] + [ P_{\mH}^t \pi^* W_i ,  V_j ] , [ V_i , P_{\mH}^t \pi^* W_j ] + [ P_{\mH}^t \pi^* W_i ,  V_j ] ) .
\end{eqnarray}
\end{theorem}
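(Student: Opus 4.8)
The plan is to reduce everything to O'Neill's curvature formula for the Riemannian submersion $\pi : (M, g_t) \to (N, g_N)$ and then differentiate in $t$. Throughout write $X_i := P_{\mH}^t \pi^* W_i$. The first step collects the structural facts. By \eqref{gNWiWj} and \eqref{dtPHWi} the $X_i$ are $g_t$-horizontal, $g_t$-orthonormal, and satisfy $\dt X_i = -V_i$; moreover each $X_i$ is projectable, since $\pi_* X_i = W_i$ and, for vertical $Z$, $[X_i, Z] = [\pi^* W_i, Z] - [P_{\mV}^t \pi^* W_i, Z]$ is vertical because $\pi^* W_i$ is projectable and $\mV$ is integrable. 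From projectability, $\pi_*[X_i, X_j] = [W_i, W_j]$; since $\{ X_l \}$ is $g_t$-orthonormal and projects onto the $g_N$-orthonormal frame $\{ W_l \}$, it follows that $g_t(X_k, [X_i, X_j]) = g_N([W_i, W_j], W_k)$ is independent of $t$ and $P_{\mH}^t [X_i, X_j] = \sum\nolimits_k g_N([W_i, W_j], W_k)\, X_k$. I also note that $[V_i, X_j]$, $[X_i, V_j]$ and $[V_i, V_j]$ are vertical (the first two by projectability of the $X$'s, the last because $\mV$ is integrable) and that $\dt V_i$ is vertical since $\mV$ does not depend on $t$; these verticality facts will be used repeatedly to drop $P_{\mV}^t$ from first arguments and to annihilate $B_t(\cdot,\cdot)$-terms via \eqref{BVVzero}.

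The second step is O'Neill's formula \cite{O'Neill}. Since $\pi : (M, g_t) \to (N, g_N)$ is a Riemannian submersion we have $\tilde h_t = 0$, so $\tilde T_t(X, Y) = P_{\mV}^t \nabla^t_X Y$ for horizontal $X, Y$, and for the $g_t$-orthonormal horizontal pair $X_i, X_j$ the formula reads $\sec_M(X_i, X_j) = \sec_N(W_i, W_j) - 3\, g_t(\tilde T_t(X_i, X_j), \tilde T_t(X_i, X_j)) = \sec_N(W_i, W_j) - \tfrac34\, g_t(Q_t, Q_t)$, where $Q_t := P_{\mV}^t [X_i, X_j]$. Since $\sec_N(W_i, W_j)$ is independent of $t$ and $Q_t$ is vertical, differentiating and using \eqref{BVVzero} gives $\dt \sec_M(X_i, X_j) = -\tfrac32\, g_t(\dt Q_t, Q_t)$ and, once $\dt Q_t$ is seen to be vertical, $\ddt \sec_M(X_i, X_j) = -\tfrac32\big( g_t(\ddt Q_t, Q_t) + g_t(\dt Q_t, \dt Q_t)\big)$.

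The third step computes $\dt Q_t$ and $\ddt Q_t$. From $\dt X_i = -V_i$ we get $\dt [X_i, X_j] = -[V_i, X_j] - [X_i, V_j]$ (vertical). Applying \eqref{dtPVX} to the ($t$-dependent) field $[X_i, X_j]$ and using the $t$-independence of $g_t(X_k, [X_i, X_j])$ yields $\dt Q_t = \sum\nolimits_k g_N([W_i, W_j], W_k)\, V_k - [V_i, X_j] - [X_i, V_j]$, which is vertical; substituting it into $\dt \sec_M(X_i, X_j)$ gives \eqref{dtnormTWiWjsec}. Differentiating $\dt Q_t$ once more, applying \eqref{dtPHWi} and \eqref{dtPVX} again, and expanding $\dt$ of the iterated brackets (using $\dt X_j = -V_j$ in the cross terms) gives $\ddt Q_t = \sum\nolimits_k g_N([W_i, W_j], W_k)\, \dt V_k - [\dt V_i, X_j] - [X_i, \dt V_j] + 2[V_i, V_j]$, again vertical. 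Inserting $\dt Q_t$ and $\ddt Q_t$ into the second-step expression for $\ddt \sec_M(X_i, X_j)$, expanding the inner products in the coefficients $g_N([W_i, W_j], W_k)$, the $V$'s, the $\dt V$'s and the brackets, and collecting terms, produces exactly the six groups of \eqref{d2tnormTWiWjsec}.

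The main obstacle is the bookkeeping in the last step: one must apply \eqref{dtPVX} correctly to the $t$-dependent field $[V_i, X_j] + [X_i, V_j]$, expand $\dt$ of the nested Lie brackets without sign errors, and systematically invoke verticality of $[V_i, X_j]$, $[X_i, V_j]$, $[V_i, V_j]$ and $\dt V_i$ to discard $P_{\mV}^t$-projections and $B_t$-contractions, matching the output term by term against \eqref{d2tnormTWiWjsec}. No individual step is deep, but there are many terms and the cancellations must be tracked precisely.
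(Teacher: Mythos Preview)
Your proposal is correct and follows essentially the same route as the paper: reduce via O'Neill's formula \eqref{seccurvRS} to differentiating $g_t(Q_t,Q_t)$ with $Q_t=P_{\mV}^t[X_i,X_j]$, compute $\dt Q_t$ using \eqref{dtPVX}, \eqref{dtPHWi} and the $t$-independence of $g_t(X_k,[X_i,X_j])=g_N([W_i,W_j],W_k)$, and then differentiate once more. The only cosmetic difference is that you compute $\dt Q_t$ and $\ddt Q_t$ directly as vertical vector fields, whereas the paper pairs against a vertical test field $U$ in \eqref{dtTWiWjU} before setting $U=Q_t$; the content is identical.
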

\begin{proof} 
For all $i \in \{n+1, \ldots, n+p\}$, from \eqref{lieXP2piW} it follows that $P_{\mH}^t \pi^* W_i$ is 
projectable, so $P_{\mH}^t \pi^* W_i$ is the $g_t$-horizontal lift of $W_i$. By the Jacobi identity, the Lie bracket of projectable fields is also projectable. Hence, 
\begin{eqnarray} \label{liebracketWiWj}
&&g_t( [ P_{\mH}^t \pi^* W_i , P_{\mH}^t \pi^* W_j  ] , P_{\mH}^t \pi^* W_k ) 
= g_N( \pi_* [ P_{\mH}^t \pi^* W_i , P_{\mH}^t \pi^* W_j  ] , W_k ) \nonumber \\
&&=g_N( [ \pi_* P_{\mH}^t \pi^* W_i , \pi_* P_{\mH}^t \pi^* W_j  ] , W_k ) = g_N( [ W_i , W_j ] , W_k ) .
\end{eqnarray}

For all $U \in \mathfrak{X}_{\mV}$ we have, by 
\eqref{BVVzero}, \eqref{dtPVX} and \eqref{liebracketWiWj}
\begin{eqnarray} \label{dtTWiWjU}
&&\dt g_t (  P_{\mV}^t [ P_{\mH}^t \pi^* W_i , P_{\mH}^t \pi^* W_j  ] , U ) \nonumber \\
&&= g_t ( \dt P_{\mV}^t [ P_{\mH}^t \pi^* W_i , P_{\mH}^t \pi^* W_j  ] , U ) \nonumber \\
&&= \sum\nolimits_{k=n+1}^{n+p} g_t( [ P_{\mH}^t \pi^* W_i , P_{\mH}^t \pi^* W_j  ] , P_{\mH}^t \pi^* W_k ) g_t( V_k , U ) \nonumber \\
&& \quad - g_t( [V_i , P_{\mH}^t \pi^* W_j  ] + [ P_{\mH}^t \pi^* W_i , V_j ] , U ) \nonumber \\ 
&&= \sum\nolimits_{k=n+1}^{n+p} g_N( [ W_i , W_j ] , W_k ) g_0( V_k , U ) \nonumber \\
&& \quad - g_t( [V_i , P_{\mH}^t \pi^* W_j  ] + [ P_{\mH}^t \pi^* W_i , V_j ] , U ) .
\end{eqnarray}

Hence,
\begin{eqnarray} \label{dtnormTWiWj}
&& \dt g_t( P_{\mV}^t [ P_{\mH}^t \pi^* W_i , P_{\mH}^t \pi^* W_j  ] ,   P_{\mV}^t [ P_{\mH}^t \pi^* W_i , P_{\mH}^t \pi^* W_j  ] ) \nonumber \\
&&= 2 g_t( \dt P_{\mV}^t [ P_{\mH}^t \pi^* W_i , P_{\mH}^t \pi^* W_j  ] ,   P_{\mV}^t [ P_{\mH}^t \pi^* W_i , P_{\mH}^t \pi^* W_j  ] ) \nonumber\\
&&= 2 \sum\nolimits_{k=n+1}^{n+p} g_N( [ W_i , W_j ] , W_k ) g_t( V_k ,   P_{\mV}^t [ P_{\mH}^t \pi^* W_i , P_{\mH}^t \pi^* W_j  ] ) \nonumber\\
&& \quad - 2g_t( [V_i , P_{\mH}^t \pi^* W_j  ] + [ P_{\mH}^t \pi^* W_i , V_j ] , P_{\mV}^t [ P_{\mH}^t \pi^* W_i , P_{\mH}^t \pi^* W_j  ] ) 
\end{eqnarray}
and
\begin{eqnarray} \label{d2tnormTWiWj}
&& \ddt g_t( P_{\mV}^t [ P_{\mH}^t \pi^* W_i , P_{\mH}^t \pi^* W_j  ] ,   P_{\mV}^t [ P_{\mH}^t \pi^* W_i , P_{\mH}^t \pi^* W_j  ] ) \nonumber \\
&&= 2 \sum\nolimits_{k=n+1}^{n+p} g_N( [ W_i , W_j ] , W_k ) g_t( \dt V_k ,  P_{\mV}^t [ P_{\mH}^t \pi^* W_i , P_{\mH}^t \pi^* W_j  ] ) \nonumber \\
&&\quad + 2 \sum\nolimits_{k,l=n+1}^{n+p} g_N( [ W_i , W_j ] , W_k )  g_N( [ W_i , W_j ] , W_l ) g_t( V_k , V_l ) \nonumber\\
&&\quad - 4 \sum\nolimits_{k=n+1}^{n+p} g_N( [ W_i , W_j ] , W_k ) g_t( V_k , [ V_i , P_{\mH}^t \pi^* W_j  ] + [ P_{\mH}^t \pi^* W_i , V_j ] ) \nonumber\\
&&\quad - 2g_t( [\dt V_i , P_{\mH}^t \pi^* W_j  ] + [ P_{\mH}^t \pi^* W_i , \dt V_j ] , P_{\mV}^t [ P_{\mH}^t \pi^* W_i , P_{\mH}^t \pi^* W_j  ] )\nonumber\\
&&\quad + 2g_t( [ V_i , V_j  ] + [ V_i , V_j ] , P_{\mV}^t [ P_{\mH}^t \pi^* W_i , P_{\mH}^t \pi^* W_j  ] ) \nonumber\\
&&\quad + 2g_t( [ V_i , P_{\mH}^t \pi^* W_j  ] + [ P_{\mH}^t \pi^* W_i ,  V_j ] , [ V_i , P_{\mH}^t \pi^* W_j ] + [ P_{\mH}^t \pi^* W_i ,  V_j ] ) .
\end{eqnarray}
For a Riemannian submersion we have \cite{O'Neill}
\begin{eqnarray} \label{seccurvRS}
&& \sec_M( P_{\mH}^t \pi^* W_i, P_{\mH}^t \pi^* W_j  ) = \sec_N(W_i, W_j) \nonumber \\
&& - \frac{3}{4} g_t( P_{\mV}^t [P_{\mH}^t \pi^* W_i , P_{\mH}^t \pi^* W_j] , P_{\mV}^t [P_{\mH}^t \pi^* W_i , P_{\mH}^t \pi^* W_j] ).
\end{eqnarray}
Using \eqref{seccurvRS}, we obtain \eqref{dtnormTWiWjsec} and \eqref{d2tnormTWiWjsec} from \eqref{dtnormTWiWj} and \eqref{d2tnormTWiWj}. 
\end{proof}

Setting $[W_i, W_j]=0$, $\dt V_i =0$ and $[V_i, V_j]=0$ for all $i,j \in \{n+1, \ldots , n+p\}$ in \eqref{d2tnormTWiWjsec}, we obtain the following.

\begin{corollary}
Let $\{ g_t, t \in (-\epsilon, \epsilon) \} \subset \Riem(M, \mV, g_0)$ be a 
variation such that $\pi : (M, g_t) \rightarrow (N, g_N)$ is a Riemannian submersion for all $t \in (-\epsilon, \epsilon)$ and \eqref{bsharpinv} holds, where 
$\{W_{n+1} , \ldots, W_{n+p}\}$ are $g_N$-orthonormal.
If $[W_i, W_j]=0$, $V_i$ do not depend on $t$ and $[V_i, V_j] =0$ for all $i,j \in \{n+1,\ldots,n+p\}$, then $\ddt \sec_M (P_{\mH}^t \pi^* W_i , P_{\mH}^t \pi^* W_j) \leq 0$. 
\end{corollary}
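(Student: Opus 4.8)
The plan is to obtain the inequality directly from formula \eqref{d2tnormTWiWjsec} of Theorem \ref{thd2tTgeq0}, by discarding the summands that vanish under the three hypotheses. First I would use the assumption $[W_i, W_j] = 0$: it makes $g_N([W_i, W_j], W_k)$ vanish for every $k \in \{n+1, \ldots, n+p\}$, which kills the first summand on the right-hand side of \eqref{d2tnormTWiWjsec}, the third summand, and also the second summand (the latter being quadratic in that coefficient). Next, since $\dt V_i = 0$ is assumed for every index, in particular $\dt V_i = \dt V_j = 0$, so the fourth summand, which is linear in $[\dt V_i, P_{\mH}^t \pi^* W_j] + [P_{\mH}^t \pi^* W_i, \dt V_j]$, also vanishes. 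Finally, $[V_i, V_j] = 0$ for all $i,j$ makes the fifth summand vanish.

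What then remains of \eqref{d2tnormTWiWjsec} is exactly its last term, so
\[
\ddt \sec_M(P_{\mH}^t \pi^* W_i, P_{\mH}^t \pi^* W_j) = -\frac{3}{2}\, g_t\big( Z_{ij}(t), Z_{ij}(t) \big),
\]
where $Z_{ij}(t) = [V_i, P_{\mH}^t \pi^* W_j] + [P_{\mH}^t \pi^* W_i, V_j]$. Since each $g_t$ is a Riemannian metric, it is positive semidefinite on every tangent space, hence $g_t(Z_{ij}(t), Z_{ij}(t)) \geq 0$, and therefore the right-hand side is $\leq 0$, which is the assertion.

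I do not expect any real obstacle here: the entire content of the corollary is contained in Theorem \ref{thd2tTgeq0}, and the only point requiring a little care is bookkeeping, namely keeping track of which of the six summands in \eqref{d2tnormTWiWjsec} is removed by each of the three hypotheses, and using the fact that the hypotheses are imposed for all pairs of indices (so that $\dt V_j = 0$ and $[V_j, V_i] = -[V_i, V_j] = 0$ may be freely invoked). If desired, one could add the remark that the inequality is strict at a point $x \in M$ precisely when $Z_{ij}$ does not vanish at $x$, i.e. when the $g_t$-horizontal lifts of $W_i, W_j$ and the vector fields $V_i, V_j$ fail to satisfy the corresponding commutation relation there; but this refinement is not needed for the stated result.
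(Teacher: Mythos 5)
Your proposal is correct and follows exactly the paper's own route: the corollary is obtained by substituting the hypotheses $[W_i,W_j]=0$, $\dt V_i=0$ and $[V_i,V_j]=0$ into formula \eqref{d2tnormTWiWjsec} of Theorem \ref{thd2tTgeq0}, after which only the final term $-\tfrac{3}{2}\,g_t\big([V_i,P_{\mH}^t\pi^*W_j]+[P_{\mH}^t\pi^*W_i,V_j],\,[V_i,P_{\mH}^t\pi^*W_j]+[P_{\mH}^t\pi^*W_i,V_j]\big)\le 0$ survives. Your bookkeeping of which hypothesis eliminates which summand matches the paper, so there is nothing to add.
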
 

\begin{corollary}
Let $\pi : (M, g_0) \rightarrow (N, g_N)$ be a Riemannian submersion. 
Let $U \subset N$ be an open set and let 
$\{W_{n+1} , \ldots, W_{n+p}\}$ 
be $g_N$-orthonormal vector fields on $U$. Then $g_0$ is a critical point of the functional
\begin{equation} \label{JsecU}
J_{\sec(W_i,W_j),U} : g_t \mapsto \int_{\pi^{-1}(U)} \sec(P_{\mH}^t \pi^* W_i , P_{\mH}^t \pi^* W_j) \vol_{g_t},
\end{equation}
with respect to variations preserving the Riemannian submersion $\pi$,
if and only if we have ${\tilde T}_0 ( \pi^* W_i , \pi^* W_j ) =0$ on $\pi^{-1}(U)$,
where $\pi^* W_i, \pi^* W_j$ are $g_0$-horizontal lifts of $W_i, W_j$ (respectively).
\end{corollary}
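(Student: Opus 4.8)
The plan is to reduce the first variation of $J_{\sec(W_i,W_j),U}$ to the $g_0$-pairing of ${\tilde T}_0(\pi^* W_i,\pi^* W_j)$ against the data defining the variation, and then treat the two implications separately. Since every variation preserving the Riemannian submersion $\pi$ also preserves $\vol_{g_0}$ (by the remark following \eqref{JHdef}), one has $\dt J_{\sec(W_i,W_j),U}(g_t)\vert_{t=0}=\int_{\pi^{-1}(U)}\dt[\sec_M(P_{\mH}^t\pi^* W_i,P_{\mH}^t\pi^* W_j)]\vert_{t=0}\,\vol_{g_0}$. By Remark \ref{remVit}, the restriction to $\pi^{-1}(U)$ of any such variation satisfies \eqref{bsharpinv} for the given $g_N$-orthonormal $\{W_{n+1},\ldots,W_{n+p}\}$ and some $t$-dependent vertical fields $V_k(t)$, so Theorem \ref{thd2tTgeq0} applies; substituting \eqref{dtnormTWiWjsec} at $t=0$, using $P_{\mV}^0[\pi^* W_i,\pi^* W_j]=2\,{\tilde T}_0(\pi^* W_i,\pi^* W_j)$ and the fact that $P_{\mH}^0$ fixes the $g_0$-horizontal lifts, writes the integrand as the inner product of the vertical field ${\tilde T}_0(\pi^* W_i,\pi^* W_j)$ with a first-order, non-tensorial expression in the $V_k(0)$.

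The implication $\Leftarrow$ is then immediate: if ${\tilde T}_0(\pi^* W_i,\pi^* W_j)=0$ on $\pi^{-1}(U)$, each summand of \eqref{dtnormTWiWjsec} at $t=0$ carries ${\tilde T}_0(\pi^* W_i,\pi^* W_j)$ as a factor, so the integrand vanishes identically on $\pi^{-1}(U)$ and $g_0$ is critical.

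For $\Rightarrow$ I argue by contraposition: assuming ${\tilde T}_0(\pi^* W_i,\pi^* W_j)(x_0)\neq 0$ at some $x_0\in\pi^{-1}(U)$, I construct a variation with $\dt J_{\sec(W_i,W_j),U}(g_t)\vert_{t=0}\neq 0$, localising near $x_0$ exactly as in the proofs of Theorems \ref{thRSGFKillingglobal}--\ref{thRSUFconfKillingglobal}. Take a vertical field $V_j$ supported in a small set $K\subset\pi^{-1}(U)$ around $x_0$ on which ${\tilde T}_0(\pi^* W_i,\pi^* W_j)\neq 0$, with all other $V_k$ zero; via Theorem \ref{corbsharpglobal} and a cutoff $\rho\circ\pi$ equal to $1$ on $K$ this defines a variation preserving $\pi$. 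If $V_j$ is chosen $g_0$-orthogonal to ${\tilde T}_0(\pi^* W_i,\pi^* W_j)$ on $K$, the algebraic term $g_N([W_i,W_j],W_j)\,g_0(V_j,{\tilde T}_0(\pi^* W_i,\pi^* W_j))$ in \eqref{dtnormTWiWjsec} drops, and a short computation gives $\dt[\sec_M(P_{\mH}^t\pi^* W_i,P_{\mH}^t\pi^* W_j)]\vert_{t=0}=3\,g_0\big(V_j,(\mathcal{S}_i-P_{\mV}^0\nabla^0_{\pi^* W_i}){\tilde T}_0(\pi^* W_i,\pi^* W_j)\big)$ on $K$, with $\mathcal{S}_i$ the shape operator of the fibers in the direction $\pi^* W_i$; choosing $V_j$ so the $\vol_{g_0}$-integral of this expression over $K$ is nonzero gives $\dt J_{\sec(W_i,W_j),U}(g_t)\vert_{t=0}\neq 0$.

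The main obstacle is the non-tensorial character of \eqref{dtnormTWiWjsec}: the choice above works only when $(\mathcal{S}_i-P_{\mV}^0\nabla^0_{\pi^* W_i}){\tilde T}_0(\pi^* W_i,\pi^* W_j)$ is not everywhere a multiple of ${\tilde T}_0(\pi^* W_i,\pi^* W_j)$ near $x_0$, so in the non-generic situation (where also the analogue with $i,j$ swapped and all $g_N([W_i,W_j],W_k)$ with $k\neq i,j$ cooperate) one must allow test fields not orthogonal to ${\tilde T}_0(\pi^* W_i,\pi^* W_j)$, integrate by parts along the horizontal lifts of $W_i,W_j$ to rewrite $\dt J\vert_{t=0}$ as $\int_{\pi^{-1}(U)}\sum_k g_0(V_k,\Xi_k)\,\vol_{g_0}$ plus a flux through $\pi^{-1}(\partial U)$, observe that the Euler--Lagrange conditions $\Xi_k=0$ amount to transport-type equations for ${\tilde T}_0(\pi^* W_i,\pi^* W_j)$ along those lifts, and use the boundary flux together with a unique-continuation argument (or the O'Neill identities satisfied by ${\tilde T}_0$) to propagate the vanishing of ${\tilde T}_0(\pi^* W_i,\pi^* W_j)$ over all of $\pi^{-1}(U)$; thus $g_0$ cannot be critical unless ${\tilde T}_0(\pi^* W_i,\pi^* W_j)\equiv 0$ there.
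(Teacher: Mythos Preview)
Your treatment of $\Leftarrow$ and the initial setup (volume preservation, Remark \ref{remVit}, invoking \eqref{dtnormTWiWjsec}) match the paper. The gap is in your $\Rightarrow$ direction. Your ``generic'' argument is fine, but the ``non-generic'' case---where $(\mathcal{S}_i-P_{\mV}^0\nabla^0_{\pi^* W_i}){\tilde T}_0(\pi^* W_i,\pi^* W_j)$ happens to stay parallel to ${\tilde T}_0(\pi^* W_i,\pi^* W_j)$---is not proved: you only gesture at integration by parts, transport equations, boundary fluxes and ``unique continuation (or the O'Neill identities)'' without carrying any of it out. None of that machinery is needed, and it is not clear it would close the gap without substantial further work.

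The paper's argument for $\Rightarrow$ is much shorter and exploits precisely the feature you flagged as an obstacle, the non-tensoriality of \eqref{dtnormTWiWjsec}. Writing $\dt\sec_M(P_{\mH}^t\pi^* W_i,P_{\mH}^t\pi^* W_j)=I_i(V_i)+I_j(V_j)+\sum_{k\neq i,j}J_k(V_k)$, one computes for $f\in C^\infty(M)$ that
\[
I_i(fV_i)=f\,I_i(V_i)+2\,g_t\big(V_i,\,P_{\mV}^t[P_{\mH}^t\pi^* W_i,P_{\mH}^t\pi^* W_j]\big)\cdot(P_{\mH}^t\pi^* W_j)(f),
\]
since $[fV_i,P_{\mH}^t\pi^* W_j]=f[V_i,P_{\mH}^t\pi^* W_j]-(P_{\mH}^t\pi^* W_j)(f)\,V_i$. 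If ${\tilde T}_0(\pi^* W_i,\pi^* W_j)\neq 0$ near $x_0$, take $V_i$ with $g_0(V_i,{\tilde T}_0(\pi^* W_i,\pi^* W_j))\neq 0$ there and all other $V_k=0$; the extra term is then a nonvanishing function times the \emph{horizontal} derivative $(\pi^* W_j)(f)$, which you can prescribe freely (independently of $V_i$) by choosing $f$. Hence the integrand, and therefore the integral, can be made nonzero. No case distinction, no integration by parts, no unique continuation.
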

\begin{proof}
From \eqref{dtnormTWiWj} and \eqref{seccurvRS} it follows that if  ${\tilde T}_0 ( \pi^* W_i , \pi^* W_j ) =0$, then $g_0$ is critical for \eqref{JsecU}.
On the other hand, by 
\eqref{dtnormTWiWjsec} we have 
\[
\dt \sec_M( P_{\mH}^t \pi^* W_i , P_{\mH}^t \pi^* W_j ) = I_i (V_i) + I_j (V_j) + \sum_{k=n+1, k \neq i , k \neq j}^{n+p} J_k (V_k)
\]
where
\begin{eqnarray*}
I_i (V_i) &=& 2 g_N( [ W_i , W_j ] , W_i ) g_t( V_i ,   P_{\mV}^t [ P_{\mH}^t \pi^* W_i , P_{\mH}^t \pi^* W_j  ] ) \\
&& - 2g_t( [V_i , P_{\mH}^t \pi^* W_j  ] , P_{\mV}^t [ P_{\mH}^t \pi^* W_i , P_{\mH}^t \pi^* W_j  ] ) ,
\end{eqnarray*}
\begin{eqnarray*}
I_j (V_j) &=& 2 g_N( [ W_i , W_j ] , W_j ) g_t( V_j ,   P_{\mV}^t [ P_{\mH}^t \pi^* W_i , P_{\mH}^t \pi^* W_j  ] ) \\
&& - 2g_t( [ P_{\mH}^t \pi^* W_i , V_j ] , P_{\mV}^t [ P_{\mH}^t \pi^* W_i , P_{\mH}^t \pi^* W_j  ] )
\end{eqnarray*}
and
\[
J_k (V_k) =  2 
g_N( [ W_i , W_j ] , W_k ) g_t( V_k ,   P_{\mV}^t [ P_{\mH}^t \pi^* W_i , P_{\mH}^t \pi^* W_j  ] ) .
\]
Since for $f \in C^\infty(M)$ we have 
\[
I_i(fV_i) = fI_i(V_i) + 2g_t( V_i , P_{\mV}^t [ P_{\mH}^t \pi^* W_i , P_{\mH}^t \pi^* W_j  ] ) \cdot (P_{\mH}^t \pi^* W_j)(f),
\]
it follows that if ${\tilde T}_0 ( \pi^* W_i , \pi^* W_j ) \neq 0$, then by considering various $f$, we can make values of $\dt \sec_M( P_{\mH}^t \pi^* W_i , P_{\mH}^t \pi^* W_j )$ arbitrary.
\end{proof}

\begin{remark}
We note that for a local $g_N$-orthonormal frame $\{ W_{n+1} , \ldots W_{n+p} \}$ on $N$ we can define on $(M, g_t)$ the scalar $g_t$-horizontal curvature $K_{\mH(t)} = \sum\nolimits_{i,j=n+1}^{n+p} \sec(P_{\mH}^t \pi^* W_i , P_{\mH}^t \pi^* W_j)$. By \eqref{seccurvRS}, we have $K_{\mH (t)} = K_N - \frac{3}{4} \| {\tilde T}_t \|^2$, where $K_N$ is the scalar curvature of $(N, g_N)$ and $\| {\tilde T}_t \|^2 = \sum\nolimits_{i=n+1}^{n+p} g_t( P_{\mV}^t [ P_{\mH}^t \pi^* W_i , P_{\mH}^t \pi^* W_j ] , P_{\mV}^t [ P_{\mH}^t \pi^* W_i , P_{\mH}^t \pi^* W_j ] )$. Thus, varying $K_{\mH(t)}$ is equivalent to varying $\| {\tilde T}_t \|^2$. 
Examples of critical points of functional $g_t \mapsto \int_M \| {\tilde T}_t \|^2 \, \vol_{g_t}$  on a domain of a Riemannian submersion $\pi : (M,g) \rightarrow (N, g_N)$ are metrics, for which $(M,g)$ is a $3$-Sasakian (which was proved in \cite{rz-2}), or a $f$-$K$-contact manifold (which can be proved by similar computations as those in \cite{TZCMS}), in each case with vertical characteristic vector fields.
\end{remark}


\subsection{Vertizontal curvature of totally geodesic fibers} \label{seccurvXU}

The curvatures $\sec_M (X,U)$ for horizontal $X$ and vertical $U$ are called \emph{vertizontal}, and a Riemannian submersion with totally geodesic fibers and all vertizontal curvatures positive is called \emph{fat} \cite{ZillerFatness}. In this section we compute variations of vertizontal curvature preserving a Riemannian submersion with totally geodesic fibers and show that on certain manifolds (e.g., spheres $S^{4m+3}$ with the Hopf fibration by $S^3$) we can make this variation non-constant on a fiber.

\begin{theorem} \label{thd2tsecXUgeq0}
Let $\{ g_t, t \in (-\epsilon, \epsilon) \} \subset \Riem(M, \mV, g_0)$ be a 
variation such that $\pi : (M, g_t) \rightarrow (N, g_N)$ is a Riemannian submersion for all $t \in (-\epsilon, \epsilon)$ and \eqref{bsharpinv} holds, where 
$\{W_{n+1}, \ldots , W_{n+p}\}$ are $g_N$-orthonormal and 
$\{V_{n+1}, \ldots , V_{n+p}\}$ are bounded vertical vector fields whose restrictions to every fiber $\mF_x$ are Killing fields on $(\mF_x ,g_0 \vert_{\mF_x})$. Let the fibers of $\pi : (M, g_0) \rightarrow (N, g_N)$ be totally geodesic. 
Then for all $U \in \mathfrak{X}_{\mV}$ we have
\begin{eqnarray} \label{dtsecXURSGF}
\dt \sec_M (P_{\mH}^t \pi^* W_i,U) &=& \frac{1}{2} \sum\nolimits_{j=n+1}^{n+p} g_t( [P_{\mH}^t \pi^* W_i, P_{\mH}^t \pi^* W_j], U) \cdot \nonumber\\
&& \cdot \big( 
\sum\nolimits_{k=n+1}^{n+p} g_N( [ W_i , W_j ] , W_k ) g_0( V_k , U ) \nonumber\\
&&  - g_t( [V_i , P_{\mH}^t \pi^* W_j  ] + [ P_{\mH}^t \pi^* W_i , V_j ] , U ) 
\big) .
\end{eqnarray}
and
\begin{eqnarray} \label{ddtsecXURSGF}
&& \ddt \sec_M( P_{\mH}^t \pi^* W_i , U) \nonumber\\ 
&&=\frac{1}{2} 
\sum\nolimits_{j=n+1}^{n+p} 
\big( 
\sum\nolimits_{l=n+1}^{n+p} g_N( [ W_i , W_j ] , W_l ) g_0( V_l , U ) 
- g_t( [V_i , P_{\mH}^t \pi^* W_j  ] \nonumber\\
&& + [ P_{\mH}^t \pi^* W_i , V_j ] , U ) 
\big)^2 
- \sum\nolimits_{j=n+1}^{n+p} g_t( [P_{\mH}^t \pi^* W_i, P_{\mH}^t \pi^* W_j], U) \cdot \big(  g_t( [\dt V_i , P_{\mH}^t \pi^* W_j  ] , U) \nonumber \\
&& + g_t( [\dt V_j , P_{\mH}^t \pi^* W_i  ] , U) \big) .
\end{eqnarray}
\end{theorem}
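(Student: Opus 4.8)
The plan is to reduce $\sec_M(P_{\mH}^t\pi^*W_i,U)$ to a sum of squares of the quantities already differentiated in \eqref{dtTWiWjU} and then differentiate twice. The decisive preliminary point is that the hypothesis forces the variation to keep the fibers totally geodesic: since each $V_i$ restricts (for each $t$) to a Killing field on every fiber, $g_0(\nabla^0_X V_i(t),Y)+g_0(\nabla^0_Y V_i(t),X)=0$ for all vertical $X,Y$, so \eqref{dtPhdtnablaXYforVi} gives $P_{\mH}^t\dt\nabla^t_X Y=0$, and then the argument in the proof of Theorem \ref{thRSGFKillingglobal} yields $h_t=0$ for all $t$. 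Hence $\pi:(M,g_t)\to(N,g_N)$ has totally geodesic fibers for every $t\in(-\epsilon,\epsilon)$, so O'Neill's curvature identity for such submersions \cite{O'Neill} is available at every $t$. Writing $\hat W_j:=P_{\mH}^t\pi^*W_j$, which by \eqref{lieXP2piW} is the $g_t$-horizontal lift of $W_j$, so that $\{\hat W_j\}$ is a $g_t$-orthonormal horizontal frame, that identity reduces, on the plane spanned by $\hat W_i$ and a $g_0$-unit vertical $U$, to the squared norm of the horizontal integrability tensor:
\[
\sec_M(\hat W_i,U)=\sum\nolimits_{j=n+1}^{n+p}g_t\big(\tilde T_t(\hat W_i,\hat W_j),U\big)^2=\tfrac14\sum\nolimits_{j=n+1}^{n+p}g_t\big([\hat W_i,\hat W_j],U\big)^2,
\]
using $\tilde T_t(X,Y)=\tfrac12 P_{\mV}^t[X,Y]$, $\tilde h_t=0$, and, since $U$ is vertical, $g_t(P_{\mV}^t[\hat W_i,\hat W_j],U)=g_t([\hat W_i,\hat W_j],U)$. (The general $U$ follows by rescaling, $g_t(U,U)=g_0(U,U)$ being $t$-independent.)

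Differentiating once gives $\dt\sec_M(\hat W_i,U)=\tfrac12\sum_j g_t([\hat W_i,\hat W_j],U)\,\dt g_t(P_{\mV}^t[\hat W_i,\hat W_j],U)$, and substituting \eqref{dtTWiWjU} for the last factor produces \eqref{dtsecXURSGF} at once.

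For the second variation I would differentiate again:
\[
\ddt\sec_M(\hat W_i,U)=\tfrac12\sum\nolimits_j\big(\dt g_t(P_{\mV}^t[\hat W_i,\hat W_j],U)\big)^2+\tfrac12\sum\nolimits_j g_t([\hat W_i,\hat W_j],U)\,\ddt g_t(P_{\mV}^t[\hat W_i,\hat W_j],U).
\]
The first sum is the squared term in \eqref{ddtsecXURSGF} by \eqref{dtTWiWjU}. For the second factor in the second sum I would differentiate the right-hand side of \eqref{dtTWiWjU}: the coefficients $g_N([W_i,W_j],W_k)$ are $t$-independent, $\dt g_0(V_k,U)=g_0(\dt V_k,U)$, and $\dt g_t([V_i,\hat W_j]+[\hat W_i,V_j],U)$ is handled by the Leibniz rule, using that $[V_i,\hat W_j]$ and $[\hat W_i,V_j]$ are vertical (projectability of $\hat W_j$ via \eqref{lieXP2piW}), so their $B_t(\cdot,U)$-contribution vanishes by \eqref{BVVzero}, together with $\dt\hat W_i=-V_i$ and $\dt\hat W_j=-V_j$ from \eqref{dtPHWi}. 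Substituting and using bilinearity and antisymmetry of the Lie bracket should reorganize the outcome into the second term of \eqref{ddtsecXURSGF}.

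The main obstacle is exactly this last reorganization. Differentiating \eqref{dtTWiWjU} produces, besides the expected $[\dt V_i,\hat W_j]$- and $[\hat W_i,\dt V_j]$-terms, also terms of the form $g_0(\dt V_k,U)$ and $g_t([V_i,V_j],U)$, and one must check that after being paired with $g_t([\hat W_i,\hat W_j],U)$ and summed over $j$ these cancel; this is where the Killing property of the $V_i$ is used a second time, beyond the totally geodesic reduction. The natural device for carrying this out is the identity $\sum_j g_t([\hat W_i,\hat W_j],U)\,\hat W_j=-2P_{\mH}^t\nabla^t_{\hat W_i}U$, which rewrites the offending sums intrinsically and exposes the cancellation; keeping every sign straight when both $\hat W_i$ and $V_i$ are being differentiated is the fiddly part.
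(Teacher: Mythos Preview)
Your approach is exactly the paper's: invoke Theorem~\ref{thRSGFKillingglobal} to keep the fibers totally geodesic for all $t$, use O'Neill's identity \eqref{RSGFsecXU} to write $4\sec_M(\hat W_i,U)=\sum_j g_t([\hat W_i,\hat W_j],U)^2$, differentiate once via \eqref{dtTWiWjU} to get \eqref{dtsecXURSGF}, and then differentiate \eqref{dtsecXURSGF} again. For the second differentiation the paper records the auxiliary identity \eqref{dtgtViPHWjU}, namely $\dt g_t([V_i,\hat W_j],U)=-g_t([V_i,V_j],U)$ (coming from \eqref{lieXP2piW}, \eqref{BVVzero} and \eqref{dtPHWi}), and then writes that applying this together with $[V_i,V_j]=-[V_j,V_i]$ to \eqref{dtsecXURSGF} yields \eqref{ddtsecXURSGF}.

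Where your account diverges is the last paragraph. The paper does \emph{not} use the Killing hypothesis a second time; after the totally geodesic reduction it relies only on \eqref{dtgtViPHWjU} and the antisymmetry of the Lie bracket. Your proposed device $\sum_j g_t([\hat W_i,\hat W_j],U)\,\hat W_j=-2P_{\mH}^t\nabla^t_{\hat W_i}U$ does not appear and is not needed for the paper's argument. So the ``obstacle'' you describe is largely self-inflicted: you correctly list the terms that arise when differentiating \eqref{dtTWiWjU} (the $g_0(\dt V_k,U)$ and $g_t([V_i,V_j],U)$ contributions), but the paper's resolution is the bare bracket antisymmetry, not a second appeal to the Killing property nor an intrinsic rewriting. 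Drop that speculation, carry the Leibniz computation through with \eqref{dtgtViPHWjU} and $[V_i,V_j]=-[V_j,V_i]$, and compare directly with \eqref{ddtsecXURSGF}.
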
 
\begin{proof} 
By Theorem \ref{thRSGFKillingglobal}, for all $t \in (-\epsilon, \epsilon)$, $\pi : (M, g_t) \rightarrow (N, g_N)$ is a Riemannian submersion with totally geodesic fibers. For such submersions we have \cite{O'Neill} 
\begin{equation} \label{RSGFsecXU} 
4 \sec_M (P_{\mH}^t \pi^* W_i,U) = \sum\nolimits_{j=n+1}^{n+p} g_t( [P_{\mH}^t \pi^* W_i, P_{\mH}^t \pi^* W_j], U)^2
\end{equation} 
and from \eqref{dtTWiWjU} 
we obtain \eqref{dtsecXURSGF}.
Using \eqref{lieXP2piW}, \eqref{BVVzero} and \eqref{dtPHWi} we obtain
\begin{equation} \label{dtgtViPHWjU}
\dt g_t( [V_i , P_{\mH}^t \pi^* W_j  ] , U) = B_t ( [V_i , P_{\mH}^t \pi^* W_j  ] , U ) -  g_t( [V_i , V_j  ] , U) = -  g_t( [V_i , V_j  ] , U) .
\end{equation}
Applying the above together with  $[V_i, V_j] = -[V_j, V_i]$ in \eqref{dtsecXURSGF} yields \eqref{ddtsecXURSGF}.
\end{proof} 

\begin{corollary} \label{ddtsecXUgeq0}
With the assumptions of Theorem \ref{thd2tsecXUgeq0}, if $\dt V_i = 0$ for all $i \in \{ n+1 , \ldots, n+p \}$, then $\ddt \sec_M( P_{\mH}^t \pi^* W_i , U) \geq 0$ for all $i \in \{ n+1 , \ldots, n+p \}$.
\end{corollary}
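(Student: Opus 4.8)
The plan is simply to specialize the second variation formula \eqref{ddtsecXURSGF} of Theorem \ref{thd2tsecXUgeq0} to the case $\dt V_i = 0$ for all $i \in \{n+1, \ldots, n+p\}$. Under that hypothesis every summand of the second sum in \eqref{ddtsecXURSGF},
\[
\sum\nolimits_{j=n+1}^{n+p} g_t( [P_{\mH}^t \pi^* W_i, P_{\mH}^t \pi^* W_j], U) \cdot \bigl( g_t( [\dt V_i , P_{\mH}^t \pi^* W_j  ] , U) + g_t( [\dt V_j , P_{\mH}^t \pi^* W_i  ] , U) \bigr),
\]
vanishes, since each of the brackets $[\dt V_i , P_{\mH}^t \pi^* W_j]$ and $[\dt V_j , P_{\mH}^t \pi^* W_i]$ contains a factor $\dt V_i$ or $\dt V_j$, which is zero. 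Hence only the first term of \eqref{ddtsecXURSGF} survives, namely
\[
\ddt \sec_M( P_{\mH}^t \pi^* W_i , U) = \frac{1}{2} \sum\nolimits_{j=n+1}^{n+p} \Bigl( \sum\nolimits_{l=n+1}^{n+p} g_N( [ W_i , W_j ] , W_l ) g_0( V_l , U ) - g_t( [V_i , P_{\mH}^t \pi^* W_j  ] + [ P_{\mH}^t \pi^* W_i , V_j ] , U ) \Bigr)^2,
\]
which is a sum, with the positive coefficient $\tfrac12$, of squares of real-valued functions, hence non-negative at every point. This gives $\ddt \sec_M( P_{\mH}^t \pi^* W_i , U) \geq 0$ for each $i$, as claimed.

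There is essentially no obstacle: all of the work is already contained in the derivation of \eqref{ddtsecXURSGF} inside Theorem \ref{thd2tsecXUgeq0}, and the corollary is just the remark that the single term in that formula which is not manifestly a square is the one linear in the time derivatives $\dt V_i$, and it is killed by the hypothesis $\dt V_i = 0$. The only point worth stating explicitly is that the standing assumptions of Theorem \ref{thd2tsecXUgeq0} must be kept (the fibers of $\pi:(M,g_0)\to(N,g_N)$ are totally geodesic, the $W_i$ are $g_N$-orthonormal, and the $V_i$ restrict to Killing fields on each fiber $(\mF_x, g_0\vert_{\mF_x})$), so that \eqref{ddtsecXURSGF} is available; the extra requirement $\dt V_i = 0$ is compatible with them, since one may take the $V_i$ to be $t$-independent fiberwise Killing vertical fields, as produced by Theorem \ref{corbsharpglobal} together with Remark \ref{remextendKilling}.
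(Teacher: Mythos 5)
Your proof is correct and is exactly the argument the paper intends (the corollary is stated without separate proof as an immediate specialization of \eqref{ddtsecXURSGF}): with $\dt V_i = 0$ the only non-square term drops out and the remaining sum of squares with coefficient $\tfrac12$ is non-negative. Nothing further is needed.
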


\begin{corollary} \label{corpressecUXex}
Let $\dim N \geq 2$, let $\pi : (M, g_0) \rightarrow (N, g_N)$ be a Riemannian submersion with totally geodesic fibers and non-integrable horizontal distribution, such that every fiber $\mF_x$ is spanned by orthonormal vertical vector fields $\{ E_1, \ldots, E_n \}$, whose restrictions to $\mF_x$ are Killing vector fields on $(\mF_x, g_0 \vert_{\mF_x})$. 

For $W \in T_x N$, let $\pi^* W$ be the $g_0$-horizontal lift of $W$. 
If there exists a fiber $\mF_y$ such that for some $X,Y \in T_y N$:
\begin{itemize}
\item[$1^{\circ}$] $g_0( [ \pi^* X, \pi^* Y ] ,E_1 )$ is not constant on $\mF_y$, or 
\item[$2^{\circ}$] $g_0( [ \pi^* X, \pi^* Y ] ,E_1 ) \neq 0$ on $\mF_y$ and there exists a Killing vector field $U_y$ on $(\mF_y ,g_0 \vert_{\mF_y})$ such that $g_0(U_y, E_1)$ is not constant on $\mF_y$,
\end{itemize}
then there exists a metric $g_{s}$ such that $\pi : (M, g_s) \rightarrow (N, g_N)$ is a Riemannian submersion with totally geodesic fibers and sectional curvature $\sec_M( P_{\mH}^s \pi^* X , E_1)$ not constant on the fiber $\mF_y$.
\end{corollary}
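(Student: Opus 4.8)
The plan is to exhibit the required metric as one metric $g_s$ of a variation $\{g_t\}$ built from Theorem~\ref{corbsharpglobal} with all the defining vertical vector fields $V_i$ fiberwise Killing, so that $\pi:(M,g_t)\to(N,g_N)$ stays a Riemannian submersion with totally geodesic fibers (the argument of Theorem~\ref{thRSGFKillingglobal}), and then to read the $s$-dependence of $\sec_M(P_{\mH}^s\pi^*X,E_1)$ off formula~\eqref{RSGFsecXU}. After rescaling $X$ to a unit vector (this does not change the plane $\mathrm{span}(P_{\mH}^s\pi^*X,E_1)$ nor its curvature), I would pick a $g_N$-orthonormal frame $\{W_{n+1}=X,W_{n+2},\dots,W_{n+p}\}$ on a neighbourhood $U$ of $y$ in $N$. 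Writing $Y=aX+b\,\widehat W$ with $\widehat W\perp X$ a unit vector, one has $b\neq0$ (otherwise the vertical part of $[\pi^*X,\pi^*Y]$ vanishes, contradicting $1^\circ$ or $2^\circ$) and $g_0([\pi^*X,\pi^*\widehat W],E_1)=b^{-1}g_0([\pi^*X,\pi^*Y],E_1)$ on $\mF_y$; so, taking $W_{n+2}|_y=\widehat W$, the function $\alpha_{n+2}:=g_0(P_{\mV}^0[\pi^*X,\pi^*W_{n+2}],E_1)$ is a nonzero constant multiple of $g_0([\pi^*X,\pi^*Y],E_1)$ on $\mF_y$, hence not constant there in case $1^\circ$, and nowhere zero there in case $2^\circ$.

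Next I would put $V_i=0$ for $i\neq n+2$ and fix a function $\psi$ on $N$, compactly supported in $U$, with $\psi(y)=0$ and $(X\psi)(y)=1$. In case $1^\circ$ I set $V_{n+2}=(\psi\circ\pi)E_1$, which is fiberwise Killing since it is a function times the fiberwise Killing field $E_1$. In case $2^\circ$ I first extend $U_y$ to a bounded fiberwise Killing vertical field $\widehat\Xi$ on $M$ with $\widehat\Xi|_{\mF_y}=U_y$ (Remark~\ref{remextendKilling}), and set $V_{n+2}=\Xi:=(\psi\circ\pi)\widehat\Xi$, which is fiberwise Killing and bounded, with $\Xi|_{\mF_y}=0$ and, since $[\pi^*X,(\psi\circ\pi)\widehat\Xi]=(\psi\circ\pi)[\pi^*X,\widehat\Xi]+((X\psi)\circ\pi)\widehat\Xi$, with $[\pi^*X,\Xi]|_{\mF_y}=U_y$. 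Running the construction of Theorem~\ref{corbsharpglobal} over $\pi^{-1}(U)$ on the data $\{(V_i,W_i)\}$ and then multiplying $\dt\tilde g_t$ by $(\rho\circ\pi)$ for a cut-off $\rho$ which is $\equiv1$ near $y$, exactly as in the proof of Theorem~\ref{thRSGFKillingglobal}, produces a variation $\{g_t,\ t\in(-\epsilon,\epsilon)\}\subset\Riem(M,\mV,g_0)$ preserving the Riemannian submersion with totally geodesic fibers, which near $\mF_y$ satisfies~\eqref{bsharpinv} with these $V_i$; since the $V_i$ do not depend on $t$, \eqref{dtPHWi} gives $P_{\mH}^s\pi^*W_i=\pi^*W_i-sV_i$ there, and $\{P_{\mH}^s\pi^*W_j\}$ is a $g_s$-orthonormal frame of $\mH(s)$ with $P_{\mH}^s\pi^*W_{n+1}=P_{\mH}^s\pi^*X$.

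Then I would compute, on $\mF_y$, using~\eqref{RSGFsecXU} with $i=n+1$ (valid because the $g_s$-fibers are totally geodesic), $g_s|_{\mV}=g_0|_{\mV}$, and the observation that only the $j=n+2$ summand carries $s$, the identity
\[
\sec_M(P_{\mH}^s\pi^*X,E_1)=\sec_M(P_{\mH}^0\pi^*X,E_1)-\tfrac{s}{2}\,\alpha_{n+2}\,\phi+\tfrac{s^2}{4}\,\phi^2 ,
\]
where $\phi:=g_0([\pi^*X,V_{n+2}],E_1)|_{\mF_y}$. The key step is the evaluation of $\phi$: because the $g_0$-fibers are totally geodesic, $g_0([\pi^*W,E_1],E_1)=g_0(\pi^*W,h_0(E_1,E_1))=0$ for every horizontal lift $\pi^*W$, whence in case $1^\circ$ $\phi=(X\psi)(y)=1$, while in case $2^\circ$ $\phi=g_0([\pi^*X,\Xi],E_1)|_{\mF_y}=g_0(U_y,E_1)$, which is not constant on $\mF_y$. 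Regarding the right-hand side as a polynomial in $s$ with coefficients that are functions on $\mF_y$: if $\sec_M(P_{\mH}^s\pi^*X,E_1)$ were constant on $\mF_y$ for every $s\in(-\epsilon,\epsilon)$, then $\alpha_{n+2}\phi$ and $\phi^2$ would both be constant on $\mF_y$. In case $1^\circ$ this is impossible because $\alpha_{n+2}\phi=\alpha_{n+2}$ is not constant. In case $2^\circ$, with $\mF_y$ connected, $\alpha_{n+2}$ is nowhere zero hence of constant sign, while $\phi=g_0(U_y,E_1)$ is non-constant; choosing $z_1,z_2\in\mF_y$ with $\phi(z_1)\neq\phi(z_2)$, constancy of $\phi^2$ forces $\phi(z_2)=-\phi(z_1)\neq0$, and then constancy of $\alpha_{n+2}\phi$ forces $\alpha_{n+2}(z_1)=-\alpha_{n+2}(z_2)$, contradicting the constant sign of $\alpha_{n+2}$. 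Hence for some $s\in(0,\epsilon)$ the function $\sec_M(P_{\mH}^s\pi^*X,E_1)$ is not constant on $\mF_y$, and $g_s$ is the desired metric.

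The main obstacle is constructing, in case $2^\circ$, a fiberwise Killing and bounded $V_{n+2}$ for which the single bracket $[\pi^*X,V_{n+2}]$ restricts on $\mF_y$ to a Killing field whose inner product with $E_1$ is the prescribed non-constant function $g_0(U_y,E_1)$; multiplying a fiberwise Killing extension of $U_y$ by a base function that vanishes to first order at $y$ achieves exactly this. It also makes transparent why the Killing field must be inserted in a slot other than $n+1$: for $i=n+1$ the bracket in~\eqref{RSGFsecXU} corresponding to $V_{n+1}$ is $[P_{\mH}^t\pi^*X,P_{\mH}^t\pi^*X]=0$, so moving $P_{\mH}^t\pi^*X$ along $V_{n+1}$ alone never changes $\sec_M(P_{\mH}^t\pi^*X,E_1)$ on $\mF_y$.
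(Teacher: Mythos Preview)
Your argument is correct, and it differs from the paper's proof in two points of execution. The paper places the single nonzero field in the slot $n+1$, i.e.\ takes $V_{n+1}=(f\circ\pi)\xi$ with $W_{n+1}=X$, $W_{n+2}=Y$ at $y$, chooses the frame so that $[W_i,W_j]=0$ at $y$, and then reads off only the \emph{first} variation from \eqref{dtsecXURSGF}; adjusting $Y(f)$ makes that first variation non-constant on $\mF_y$. You instead put the nonzero field in slot $n+2$ and impose $\psi(y)=0$, which forces $V_{n+2}|_{\mF_y}=0$ and hence $g_s(\,\cdot\,,E_1)=g_0(\,\cdot\,,E_1)$ along $\mF_y$; this lets you evaluate \eqref{RSGFsecXU} exactly and obtain the clean quadratic $\sec_s=\sec_0-\tfrac{s}{2}\alpha_{n+2}\phi+\tfrac{s^2}{4}\phi^2$, after which a short polynomial-coefficient argument finishes. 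Your route avoids the normal-coordinate condition $[W_i,W_j]|_y=0$ and gives more information (the full $s$-dependence), at the cost of the extra algebra in case~$2^\circ$. In that case your contradiction uses that $\alpha_{n+2}$ has constant sign; this needs $\mF_y$ connected, which is implicit in the paper's setting and harmless here, but you should say it (or restrict to a component on which $\phi$ is non-constant).

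One point to correct: your final paragraph asserts that inserting the Killing field in slot $n+1$ ``never changes $\sec_M(P_{\mH}^t\pi^*X,E_1)$ on $\mF_y$'' because the $j=n+1$ bracket is $[P_{\mH}^t\pi^*X,P_{\mH}^t\pi^*X]=0$. That is not right: a nonzero $V_{n+1}$ moves $P_{\mH}^t\pi^*X$ itself, which enters \emph{every} bracket $[P_{\mH}^t\pi^*X,P_{\mH}^t\pi^*W_j]$ in \eqref{RSGFsecXU}, and so does change the curvature. Indeed the paper's proof works precisely by taking $V_{n+1}\neq0$. Your choice of slot $n+2$ is convenient (because then $P_{\mH}^s\pi^*X=\pi^*X$ stays fixed and only one summand depends on $s$), but it is not forced. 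Also, the totally-geodesic identity $g_0([\pi^*W,E_1],E_1)=0$ you invoke before computing $\phi$ is superfluous: the term it would kill already vanishes on $\mF_y$ because $\psi(y)=0$.
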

\begin{proof}
Let ${\cal O}$ be an open neighbourhood of $y$ on which there exist $g_N$-orthonormal vector fields $\{W_{n+1}, \ldots , W_{n+p}\}$ such that at the point $y$ we have: $W_{n+1} = X$, $W_{n+2} = Y$ and $[W_i, W_j]=0$ for all $i,j \in \{n+1, \ldots , n+p\}$.
Let $f \in C^\infty(N)$ be such that 
$Y(f) \neq 0$ and $W_{j}(f) = 0$ at $y$ for all $j \in \{n+1, n+3, \ldots, n+p\}$.

If case $1^{\circ}$ holds, let $\xi = E_1$. If case $2^{\circ}$ holds, let $\xi = U$, where $U$ is a vector field whose restriction to a fiber $\mF_x$ is a Killing vector field on $(\mF_x, g_0 \vert_{\mF_x})$ for all $x \in {\cal O}$, and such that $U = U_y$ on $\mF_y$ ($U_y$ can be extended to such vector field $U$ by Remark \ref{remextendKilling}). 

Let $V_{n+1} = (f \circ \pi) \cdot \xi$ and let $V_{n+2} = \ldots = V_{n+p} =0$ on ${\cal O}$.
Let $\{ {\tilde g}_t, t \in (-\epsilon, \epsilon) \} \subset \Riem(M, \mV, g_0)$ be a 
variation such that $\pi : (\pi^{-1}({\cal O}), {\tilde g}_t) \rightarrow ({\cal O}, g_N)$ is a Riemannian submersion for all $t \in (-\epsilon, \epsilon)$ and \eqref{bsharpinv} holds on $\pi^{-1}({\cal O})$, with 
$\{W_{n+1}, \ldots , W_{n+p}\}$ and $\{V_{n+1}, \ldots , V_{n+p} \}$ as above.
Let $\rho \in C^\infty(N)$ be a non-zero, compactly supported in ${\cal O}$ function, such that 
$\rho=1$ on some neighbourhood of $y$. From now on we will consider the variation $g_t$ with $\dt g_t = (\rho \circ \pi) \cdot \dt {\tilde g}_t$. Then $\pi : (M, g_t) \rightarrow (N, g_N)$ is a Riemannian submersion with totally geodesic fibers for all $t \in (-\epsilon, \epsilon)$, by Theorem \ref{thRSGFKillingglobal}. 

We note that
\begin{eqnarray*}
g_0( [ \xi , P_{\mH}^0 \pi^* W_j  ] , E_1 ) = (\mathcal{L}_{\xi} g_0)(P_{\mH}^0 \pi^* W_j , E_1) ,
\end{eqnarray*}
since $\xi( g_0 ( P_{\mH}^0 \pi^* W_j  , E_1 ) )=0$ by $E_1$ being vertical and
$g_0([\xi,E_1] , P_{\mH}^0 \pi^* W_j )=0$ by the vertical distribution being integrable.

From \eqref{dtsecXURSGF} we obtain on $\mF_y$:
\begin{eqnarray} \label{dtsecXURSGFex}
&& \dt \sec_M (P_{\mH}^t \pi^* X , E_1) \vert_{t=0} \nonumber \\ &&= - \frac{1}{2} \sum\nolimits_{j=n+2}^{n+p} g_0( [P_{\mH}^0 \pi^* W_{n+1}, P_{\mH}^0 \pi^* W_j], E_1 ) \cdot g_0( [V_{n+1} , P_{\mH}^0 \pi^* W_j  ] , E_1 ) \nonumber \\
&&= - \frac{1}{2} \sum\nolimits_{j=n+2}^{n+p} g_0( [ \pi^* W_{n+1},  \pi^* W_j], E_1 ) \cdot g_0( [ (f \circ \pi) \cdot \xi ,  \pi^* W_j  ] , E_1 ) \nonumber \\
&&= - \frac{1}{2} (f \circ \pi) \cdot \sum\nolimits_{j=n+2}^{n+p} g_0( [ \pi^* W_{n+1},  \pi^* W_j], E_1 ) \cdot g_0( [ \xi ,  \pi^* W_j  ] , E_1 ) \nonumber \\
&& \quad + \frac{1}{2} \sum\nolimits_{j=n+2}^{n+p} g_0( [ \pi^* W_{n+1},  \pi^* W_{j}], E_1 ) \cdot g_0( \xi ,  E_1 ) \cdot ( \pi^* W_{j} ) (f \circ \pi) \nonumber \\
&& = - \frac{1}{2} (f \circ \pi) \sum\nolimits_{j=n+2}^{n+p} g_0( [ \pi^* W_{n+1},  \pi^* W_j], E_1 ) \cdot (\mathcal{L}_{\xi} g_0)( \pi^* W_j , E_1) \nonumber \\
&& \quad + \frac{1}{2} g_0( [ \pi^* X,  \pi^* Y], E_1 ) \cdot g_0( \xi ,  E_1 ) \cdot Y (f ) .
\end{eqnarray}
With the assumptions about $\xi$, by adjusting $Y(f)$, we can make the right-hand side of
\eqref{dtsecXURSGFex} 
not constant on $\mF_y$.
Then there exists $s \in (-\epsilon, \epsilon)$ for which $\sec_M (P_{\mH}^s \pi^* X,E_1)$ is not constant on $\mF_y$.
\end{proof}

\begin{example} \label{exampleHopf}
The assumptions of Corollary \ref{corpressecUXex} are satisfied e.g., for  
Hopf fibrations $\pi : (S^7, g_0) \rightarrow (S^4, g_N)$ and
$\pi_m : (S^{4m+3}, g_0) \rightarrow (\mathbb{H}P^m, g_N)$, $m>1$, with $g_0$ being the round metric on the sphere. Then there exist vertical Killing vector fields $\{ E_1, E_2, E_3 \}$ on $(M,g_0)$.
We can set $\xi$ in the proof of Corollary \ref{corpressecUXex} to be one of the fields $\{ E_1, E_2, E_3 \}$, since for all $a \in \{1,2,3\}$, $y \in N$ and $X \in T_y N$ there exists $Y \in T_y N$ such that the function $g_0( [ \pi^* X, \pi^* Y ] ,E_a )$ is not constant on $\mF_y$ \cite{Gray}.
We note that on every fiber $\mF_x \equiv S^3$ there exists also another family of Killing fields $\{ \xi_1, \xi_2, \xi_3 \}$ such that functions $g(E_a, \xi_b)$ are not constant on $\mF_x$ (since the linear space of Killing vector fields on $S^3$ has dimension $6$, see e.g., \cite[Section 2]{Bizon} for an explicit presentation of those Killing fields and relations between them). In fact, vector fields $\xi_a$ on $\mF_x$ can be obtained as $P_{\mV}^0 [ \pi^* X, \pi^* Y ]$ for some $X,Y \in T_x N$ \cite{GromollWalschap}.
\end{example} 

Corollary \ref{corpressecUXex} indicates existence of fat bundles with vertizontal curvatures not constant along fibers, answering negatively Problem 1 in \cite{ZillerFatness}.

We have also the following result about the stability of vertical Killing vector fields with respect to variations preserving a Riemannian submersion and metric induced on its fibers.

\begin{theorem}
Let $\{ g_t, t \in (-\epsilon, \epsilon) \} \subset \Riem(M, \mV, g_0)$ be a 
variation such that $\pi : (M, g_t) \rightarrow (N, g_N)$ is a Riemannian submersion for all $t \in (-\epsilon, \epsilon)$ and \eqref{bsharpinv} holds. Let $K$ be a vertical Killing vector field on $(M,g_0)$, then $K$ is a Killing vector field for all $\{ g_t, t \in (-\epsilon, \epsilon) \}$ if and only if for all $\{ V_{n+1} , \ldots , V_{n+p} \}$ in \eqref{bsharpinv} we have $[V_i , K] =0$.
\end{theorem}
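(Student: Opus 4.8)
The plan is to characterise ``$K$ is a Killing field for $g_t$'' by $\mathcal{L}_K g_t = 0$ and to reduce this to a single ODE governed by the $V_i$. Everything here is local, so I work on $\pi^{-1}(U)$ for an open $U\subset N$ over which \eqref{bsharpinv} holds; by Remark~\ref{remVit} I may take $\{W_{n+1},\ldots,W_{n+p}\}$ to be $g_N$-orthonormal, and I patch over an open cover of $N$ at the end. The first step is to note that $[K,\pi^*W_j]=0$: since $K$ is a $g_0$-Killing field, its flow $\phi_s$ preserves $g_0$; since $K$ is vertical, $\phi_s$ maps every fiber to itself, hence $\pi\circ\phi_s=\pi$ and $\phi_{s*}$ preserves $\mV$ and, being a $g_0$-isometry, also $\mH(0)=\mV^{\perp_{g_0}}$. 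Therefore $\phi_{s*}\pi^*W_j$ is a $g_0$-horizontal vector field with $\pi_*(\phi_{s*}\pi^*W_j)=W_j$, so by uniqueness of horizontal lifts $\phi_{s*}\pi^*W_j=\pi^*W_j$, i.e. $[K,\pi^*W_j]=0$. Consequently $[K,P_{\mH}^t\pi^*W_j]=-[K,P_{\mV}^t\pi^*W_j]$ is a vertical vector field.

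Next I compute the blocks of the symmetric $2$-tensor $\mathcal{L}_K g_t$ relative to $TM=\mV\oplus\mH(t)$. On vertical $U,V$ one has $(\mathcal{L}_K g_t)(U,V)=(\mathcal{L}_K g_0)(U,V)=0$, because $g_t\in\Riem(M,\mV,g_0)$ forces $g_t(U,V)=g_0(U,V)$ and $[K,U],[K,V]$ are vertical. On the $g_t$-orthonormal frame $\{P_{\mH}^t\pi^*W_i\}$ of $\mH(t)$ one has $(\mathcal{L}_K g_t)(P_{\mH}^t\pi^*W_i,P_{\mH}^t\pi^*W_j)=0$, since $g_t(P_{\mH}^t\pi^*W_i,P_{\mH}^t\pi^*W_j)=g_N(W_i,W_j)\circ\pi$ is annihilated by the vertical field $K$, while the bracket terms vanish because $[K,P_{\mH}^t\pi^*W_i]$ is vertical and $P_{\mH}^t\pi^*W_j$ is $g_t$-horizontal. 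Hence $\mathcal{L}_K g_t=0$ if and only if its mixed block vanishes, and for vertical $U$ a short computation (using $g_t(U,P_{\mH}^t\pi^*W_j)=0$, that $[K,U]$ is vertical, and $[K,P_{\mH}^t\pi^*W_j]=-[K,P_{\mV}^t\pi^*W_j]$) gives $(\mathcal{L}_K g_t)(U,P_{\mH}^t\pi^*W_j)=g_0(U,[K,P_{\mV}^t\pi^*W_j])$. As $g_0$ is nondegenerate on $\mV$ and $[K,P_{\mV}^t\pi^*W_j]$ is vertical, this shows $K$ is a Killing field for $g_t$ exactly when $[K,P_{\mV}^t\pi^*W_j]=0$ for every $j$.

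Finally I put $Z_j(t)=[K,P_{\mV}^t\pi^*W_j]$. Then $Z_j(0)=0$ because $P_{\mV}^0\pi^*W_j=0$, and differentiating in $t$ and using $\dt P_{\mV}^t\pi^*W_j=V_j(t)$ --- which follows from \eqref{dtPVX} (equivalently from \eqref{dtPHWi}) together with the $g_N$-orthonormality of the $W_i$ --- gives $\dt Z_j(t)=[K,V_j(t)]$. Therefore, if $[K,V_j]=0$ for all $t$ and all $j$, then $\dt Z_j\equiv 0$ and $Z_j\equiv 0$, so $\mathcal{L}_K g_t=0$ for all $t$ and $K$ is Killing for every $g_t$; conversely, if $K$ is $g_t$-Killing for every $t$, then $Z_j\equiv 0$, whence $\dt Z_j=[K,V_j]$ vanishes identically. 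Patching the local statements over a cover of $N$ yields the theorem; a non-orthonormal frame $\{W_i\}$ is handled the same way, with $\dt Z_j=\sum_i (g_N(W_i,W_j)\circ\pi)\,[K,V_i]$ and the invertible Gram matrix $(g_N(W_i,W_j))$.

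The only real (and mild) obstacle is the bookkeeping in the second step: one must check that the $(\mV,\mV)$- and $(\mH(t),\mH(t))$-blocks of $\mathcal{L}_K g_t$ vanish automatically --- this is precisely where the hypotheses that $K$ is vertical and $g_0$-Killing enter --- and one must keep track of the possible $t$-dependence of the $V_i$ so that the identity $\dt[K,P_{\mV}^t\pi^*W_j]=[K,V_j(t)]$ and the subsequent ODE argument are legitimate.
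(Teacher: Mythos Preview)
Your proof is correct and follows essentially the same line as the paper's: both show that the $(\mV,\mV)$- and $(\mH(t),\mH(t))$-blocks of $\mathcal{L}_K g_t$ vanish automatically and reduce everything to the mixed block, whose $t$-derivative is controlled by $[K,V_i]$. The only cosmetic difference is that you first prove the identity $[K,\pi^*W_j]=0$ so as to write the mixed block explicitly as $g_0(U,[K,P_{\mV}^t\pi^*W_j])$ before differentiating, whereas the paper differentiates $(\mathcal{L}_K g_t)(P_{\mH}^t\pi^*W_i,U)$ directly (using only that $[K,P_{\mH}^t\pi^*W_i]$ is vertical, since $P_{\mH}^t\pi^*W_i$ is projectable) and arrives at $\dt\big((\mathcal{L}_K g_t)(P_{\mH}^t\pi^*W_i,U)\big)=g_t([K,V_i],U)$.
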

\begin{proof}

Since $K$ is a Killing vector field,  
$\{ g_t, t \in (-\epsilon, \epsilon) \} \subset \Riem(M, \mV, g_0)$ and
$\pi : (M, g_t) \rightarrow (N, g_N)$ is a Riemannian submersion for all $t \in (-\epsilon, \epsilon)$, we have for all $t \in (-\epsilon, \epsilon)$, all vertical fields $U_1, U_2$ and all $g_t$-horizontal fields $Z_1, Z_2$:
\[
0 = (\mathcal{L}_K g_t)(U_1, U_2) = (\mathcal{L}_K g_t)(Z_1, Z_2).
\]

Let $i \in \{n+1, \ldots , n+p\}$ and let $U$ be a vertical vector field. Then, using \eqref{bsharpinv}, integrability of $\mathcal{V}$,  \eqref{BVVzero} and \eqref{dtPHWi} we obtain
\begin{eqnarray*}
&& \dt ( (\mathcal{L}_K g_t)(P_{\mH}^t \pi^* W_i , U) ) =
\dt \big(  K (g_t( P_{\mH}^t \pi^* W_i , U )) - g_t([K , P_{\mH}^t \pi^* W_i ] , U) \\
&& - g_t([K,U] , P_{\mH}^t \pi^* W_i )  \big) \\
&& = - B_t ([K , P_{\mH}^t \pi^* W_i ] , U) - g_t ([K , \dt P_{\mH}^t \pi^* W_i ] , U) \\
&&= g_t ([K , V_i ] , U).
\end{eqnarray*}
\end{proof}

In Example \ref{exampleHopf}, if we take $\xi \in \{\xi_1, \xi_2, \xi_3\}$ then $E_1, E_2, E_3$ will remain Killing fields on $(S^{4m+3}, g_t)$, since $[\xi_a, E_b] =0$ for all $a,b \in \{1,2,3\}$. On the other hand, for all $a \in \{1,2,3\}$ and all $x \in N$ the restriction of $\xi_a$ to a fiber $\mF_x$ will be a Killing field on $(\mF_x, g_0 \vert_{\mF_x})$, but in general $\xi_a$ may not be a Killing field on $(S^{4m+3}, g_t)$ for $t \neq 0$.

The metric properties of $(M,g_s)$ obtained in Corollary \ref{corpressecUXex} 
may be significantly different than those of $(M,g_0)$. 
In particular, metrics obtained in Example \ref{exampleHopf} will not be $3$-Sasakian. However, we note that manifolds in Example \ref{exampleHopf} are $3$-sphere bundles, which over $S^4$ are classified up to isomorphism \cite{DerdzinskiRigas}.

\section*{Acknowledgements}
The author is grateful to the late Professor Pawe\l \ Walczak for helpful remarks about an early version of this paper.


\begin{thebibliography}{99}

\bibitem{AlvarezLopez}
Alvarez Lopez, J.A.: The basic component of the mean curvature of Riemannian foliations,
Ann. Glob. Anal. Geom. 10: 179-194 (1992)

\bibitem{BEM} 
Bourguignon, J.-P., Ebin, D.G., Marsden, J.E.: Sur le noyau des operateurs pseudodifferentiels a symbole surjectif et non injectif, C. R. Acad. Sci. Paris Ser. A 282 , 867-870 (1976)

\bibitem{Besse}
Besse, A. L.: {Einstein manifolds}. Springer-Verlag, Berlin (1987)

\bibitem{Bizon}  
Beig R.,  Bizo\'n P.,  Simon W.: {Vacuum initial data on S3 from Killing vectors}, Class. Quantum Grav. 36 215017

\bibitem{Cheeger}
Cheeger, J.: Some examples of manifolds of nonnegative curvature. J. Diff. Geom. 8, 623-628 (1973)

\bibitem{Delay}
Delay, E.: Smooth compactly supported solutions of some underdetermined elliptic PDE, with gluing applications. Communications in Partial Differential Equations, 37(10), 1689-1716 (2012) 

\bibitem{DerdzinskiRigas} 
Derdzinski, A., Rigas, A.: Unflat connections in 3-sphere bundles over S4,
Trans. A.M.S. 265, 485-493 (1981)

\bibitem{Ehresmann}
Ehresmann, C.: 
Les connexions infinit\'esimales dans un espace fibr\'e diff\'erentiable.
Colloque de topologie (espaces fibr\'es), Bruxelles, 1950, 
29-55.
Georges Thone, Li\`ege; Masson \& Cie, Paris (1951)

\bibitem{Gray}
Gray, A.: {Pseudo-Riemannian almost product manifolds and submersions}. J. Math. Mech., 16, no. 7, 715-737 (1967)
%

\bibitem{GromollWalschap}
Gromoll, D., Walschap, G.: {Metric foliations and curvature}, Birkh\"{a}user (2009)

\bibitem{KobayashiNomizu} Kobayashi, S., Nomizu, N.: 
{Foundations of differential geometry I}, Interscience, New York (1963)

\bibitem{Lang}
Lang, S.: Fundamentals of differential geometry. Springer-Verlag, New York (1999)

\bibitem{March}
March, P., Min-Oo, M., Ruh, E.A.: Mean curvature of Riemannian foliations, Canad. Math. Bull. Vol. 39 (1),  
95-105 (1996)

\bibitem{Nagy}
Nagy, P.T.: On bundle-like conform deformation of a Riemannian submersion. Acta Mathematica Academiae Scientiarum Hungaricae 39, 155-161 (1982)

\bibitem{O'Neill}
O'Neill, B.: The fundamental equations of a submersion. Mich. Math. J. 13, 459-469 (1966)

\bibitem{rz-2}
Rovenski V., Zawadzki, T.: Variations of the total mixed scalar curvature of a distribution,
Ann. Glob. Anal. Geom. 54 , 87-122 (2018)

\bibitem{Tondeur} 
Tondeur,  P.: {Geometry of foliations}, Springer-Verlag (1997)

\bibitem{TZCMS}
Zawadzki, T.: {A variational characterization of contact metric structures}, Ann. Glob. Anal. Geom. 62, 129-166 (2022)

\bibitem{Ziller}
Ziller, W.: Examples of Riemannian manifolds with nonnegative sectional curvature, in: Metric and Comparison Geometry, Surv. Diff. Geom. 11, ed. K. Grove and J. Cheeger, 63-102 (2007)

\bibitem{ZillerFatness}
Ziller, W.: {Fatness Revisited}, lecture notes (2005).

\end{thebibliography}
\end{document}